\newtheoremstyle{newplain}
  {8pt}
  {8pt}
  {\itshape}  
  {}       
  {\rmfamily\scshape} 
  {.}         
  {5pt plus 1pt minus 1pt} 
  {}          
\theoremstyle{newplain}
\newtheorem*{theorem*}{Theorem}
\newtheorem{theorem}{Theorem}[section]
\newtheorem{definition}[theorem]{Definition}
\newtheorem{proposition}[theorem]{Proposition}
\newtheorem*{corollary*}{Corollary}
\newtheorem{corollary}[theorem]{Corollary}
\newtheorem{lemma}[theorem]{Lemma}
\theoremstyle{remark}
\newtheorem{remark}[theorem]{Remark}
\newcommand{\bb}[1]{\mathbb{#1}}
\newcommand{\mc}[1]{\mathcal{#1}}
\newcommand{\mf}[1]{\mathfrak{#1}}
\newcommand{\ms}[1]{\mathsmaller{#1}}
\DeclareMathOperator{\GL}{GL}
\DeclareMathOperator{\Un}{U}
\DeclareMathOperator{\Aut}{\mathrm{Aut}}
\DeclareMathOperator{\Hom}{Hom}
\DeclareMathOperator{\Der}{Der}
\DeclareMathOperator{\ad}{Ad}
\DeclareMathOperator{\aad}{ad}
\DeclareMathOperator{\Weyl}{Weyl}
\DeclareMathOperator{\Ima}{Im}
\DeclareMathOperator{\id}{id}
\DeclareMathOperator{\codim}{codim}
\DeclareMathOperator{\ord}{ord}
\DeclareMathOperator{\rk}{rank}
\DeclareMathOperator{\End}{End}
\DeclareMathOperator{\Stab}{Stab}
\DeclareMathOperator{\C}{C}
\DeclareMathOperator{\N}{N}
\newcommand{\Cher}[1][c]{\mathcal{H}_{1, #1, X, G}}
\DeclareMathOperator{\supp}{supp}
\DeclareMathOperator{\Span}{span}
\newcommand{\OO}{\mathcal{O}}
\DeclareMathOperator{\hh}{HH}
\DeclareMathOperator{\hc}{HC}
\DeclareMathOperator{\h}{H}
\DeclareMathOperator{\pr}{pr}
\DeclareMathOperator{\cone}{cone}
\DeclareMathOperator{\Sym}{Sym}
\DeclareMathOperator{\Sh}{Sh}
\DeclareMathOperator{\Conj}{Conj}
\DeclareMathOperator{\ev}{ev}
\DeclareMathOperator{\Ch}{Ch}
\DeclareMathOperator{\tr}{tr}
\newcommand{\pd}[1]{\frac{\partial}{\partial #1}}
\newcommand{\Chered}[2][\mathfrak{h}]{\widehat{H}_{1, \mathsmaller{\lau{\hbar}}}(#1, #2)}
\newcommand{\colim}{\varinjlim}
\newcommand{\invlim}{\varprojlim}
\renewcommand{\*}{\ast}
\DeclareMathOperator{\res}{res}
\newcommand{\coor}[1]{{#1}^{\textrm{coor}}}
\newcommand{\presup}[2]{\prescript{\ms{#1}}{}{\mkern-1.5mu #2}}
\newcommand{\HC}{\mc{A}_{n-l, l}^H}
\newcommand{\fl}{\coor{\pi}_{\*}\OO_{\textrm{flat}}(\coor{\mc{N}}|_{X_H^i}\times\HC)}
\newcommand{\efl}{\coor{\pi}_{\*}\OO_{\textrm{flat}}(\coor{\mc{N}}\times\HC)}
\newcommand{\sfl}{\coor{\pi}_{\*}
\OO_{\textrm{flat}}^{\infty}(\coor{\mc{N}}\times\HC)}
\newcommand{\lau}[1]{(\hspace{-0,1em}\!(\hspace{-0,05em}#1\hspace{-0,05em})\hspace{-0.1em}\!)}
\DeclareMathOperator{\ind}{ind}
\DeclarePairedDelimiter\floor{\lfloor}{\rfloor}
\DeclareMathAlphabet{\mathcal}{LS1}{stixscr}{m}{n}
\begin{document}
\title{Trace densities and algebraic index theorems for sheaves of formal Cherednik algebras}
\author{Alexander Vitanov}
\begin{abstract}
We show how a novel construction of the sheaf of Cherednik algebras $\mc{H}_{1, c, X, G}$ on a quotient orbifold $Y:=X/G$ in author's prior work leads to results for $\mc{H}_{1, c, X, G}$ which until recently were viewed as intractable. First, for every orbit type stratum in $X$, we define a trace density map for the Hochschild chain complex of $\mc{H}_{1, c, X, G}$, which generalizes the standard Engeli-Felder's trace density construction for the sheaf of differential operators $\mc{D}_X$. Second, by means of the newly obtained trace density maps, we prove an isomorphism in  the derived category of complexes of $\bb C_{Y}\llbracket\hbar\rrbracket$-modules which computes the hypercohomology of the Hochschild chain complex 
of the sheaf of formal Cherednik algebras $\mc{H}_{1, \hbar, X, G}$. We show that this hypercohomology 
is isomorphic to the Chen-Ruan cohomology of the orbifold $Y$ with values in the ring of formal power series $\bb C\llbracket\hbar\rrbracket$. We infer that the Hochschild chain complex of the sheaf of skew group algebras $\mc{H}_{1, 0, X, G}$ has a well-defined Euler characteristic which is proportional to the topological Euler characteristic of $Y$. Finally, we prove an algebraic index theorem. 
\end{abstract}

\maketitle
\tableofcontents
\section{Introduction}
In \cite{Eti04}, Pavel Etingof introduced a global version of the rational Cherednik algebra. Concretely, he attached to every algebraic and analytic variety $X$ with a faithful action of a finite group $G$ with $k$ conjugacy classes of complex reflections a sheaf of Cherednik algebras $\mc{H}_{1, c, X, G}$. He showed that the formal version of that sheaf, $\mc{H}_{1, \hbar, X, G}$, is a formal deformation of the sheaf of skew group algebras $\mc{H}_{1, 0, X, G}=\mc{D}_X\rtimes G$ in the $G$-equivariant topology on $X$, where $\mc{D}_X$ is the sheaf of differential operators on $X$. 
Since the introduction of these sheaves of algebras over a decade and a half ago very little advances have been made into their representation theory. One still open problem of particular interest is the derivation of homological invariants under deformations as well as homological "detectors" in the sense of Section $1$ in \cite{RT12} which distinguish between trivial and nontrivial formal deformations of $\mc{D}_X\rtimes G$. In the current paper, we begin filling that gap 
by showing that the hypercohomology does not distinguish between trivial and nontrivial formal deformations of $\mc{D}_X\rtimes G$ and finally, by proving an algebraic index theorem which serves as a detector of the type of formal deformation of $\mc{D}_X\rtimes G$. 

\subsection*{Main results}
The main results discussed in the current note are an improved version of a part of the research results in the author's PhD thesis \cite{PhDVit19}. 

In \cite{Vit19, PhDVit19}, it is shown how for an $n$-dimensional smooth analytic variety $X$ with a faithful action of a finite group $G$, the sheaf of Cherednik algebras $\mc{H}_{1, c, X, G}$ on the quotient orbifold $Y:=X/G$ can be constructed with tools from Gelfand-Kazhdan formal geometry. Specifically, the sheaf is realized by means of gluing of sheaves of flat sections of special flat holomorphic bundles on the orbit type strata $X_H^i$ of $X$ for all parabolic subgroups $H$ of $G$ in the $G$-equivariant topology on $X$. In the language of \cite{BK04}, these flat holomorphic bundles arise as localizations of special associative algebras $\HC$ with a Harish-Chandra module structure \cite{Vit19, PhDVit19} with respect to certain formal geometric Harish-Chandra torsors. It is shown in \cite{Vit19, PhDVit19} that every section of $\mc{H}_{1, c, X, G}$ over a $G$-invariant Stein open set $U$ in $Y$ corresponds to a family of flat sections of localizations over the orbit type strata in $X$ which have a non-empty intersection with the preimage of $U$ in $X$. The family is uniquely determined by a set of gluing conditions which its members saturate.  

In that note, we extend the localizations on all strata of codimension $l$, $1\leq l\leq n$, to the fixed point submanifolds containing the strata. We utilize the construction from \cite{Vit19, PhDVit19} to derive for every nontrivial parabolic subgroup $H$ of $G$ a map from the sheaf of (formal) Cherednik algebras to the sheaf of flat sections of the localization on the unique connected fixed point submanifold component $X_i^H$ containing $X_H^i$ in the $G$-equivariant topology.  We  show how these maps induces morphisms (see morphisms \eqref{tdm} and \eqref{formaltdm}) 
\begin{align}
\label{tdmone}
\chi^H_i:\mc{C}_{\bullet}(\mc{H}_{1, c, X, G})&\to p_*j_{i*}^H\Omega_{X_i^H}^{2n-2l-\bullet},\\
\label{tdmtwo}
\chi_{i, \hbar}^H:\mc{C}_{\bullet}(\mc{H}_{1, \hbar, X, G})&\to p_*j_{i*}^H\Omega_{X_i^H}^{2n-2l-\bullet}\llbracket\hbar\rrbracket\quad(\hbar:=(\hbar_1, \dots, \hbar_k)~k~\textrm{formal parameters})
\end{align}
from the Hochschild chain complex of sheaves of (formal) Cherednik algebras to the de Rham complex of smooth (formal) differential forms on $X_i^H$ where $p$ is the projection from $X$ onto $Y$. These maps are referred to as \emph{(formal) trace density morphism} because their images at the cohomology level are de Rham cohomology classes which in turn are integrable over compact submanifolds and thus yield traces on   the algebra of global sections. The trace density morphisms in this note follow closely the trace density constructions in \cite{EF08, RT12} and in fact generalize the standard Engeli-Felder trace density construction in \cite{EF08} for the sheaf of holomorphic differential operators $\mc{D}_X$. Namely, in the special case $G=\{\id_G\}$, the trace density morphisms \eqref{tdmone} and \eqref{tdmtwo} reduce to the standard trace density morphism $(2)$ in \cite{EF08} for $\mc{D}_X$. Second, with  the help of trace density morphisms \eqref{tdmtwo}, we construct an isomorphism in the derived category ${\bf{D}}(\bb C_{Y}\llbracket\hbar\rrbracket)$ of complexes of sheaves of modules over the sheaf of rings $\bb C_{Y}\llbracket\hbar\rrbracket$ (see Theorem \ref{quasiiso}) by means of which the hypercohomology of the Hochschild chain complex of $\mc{H}_{1, \hbar, X, G}$ can be calculated. 
\hypertarget{thma}{\begin{theorem*}[A]
The map of cochain complexes of sheaves
\begin{equation*}
\bigoplus_{\substack{i\\g\in G}}\chi_{i, \hbar}^g: \mc{C}_{\bullet}(\mc{H}_{1, \hbar, X, G})\rightarrow\big(\bigoplus_{\substack{i\\g\in G}}p_*j_{i*}^g\Omega_{X_{i}^g}^{2n-2l_g^i-\bullet}\llbracket\hbar\rrbracket\big)^G
\end{equation*}
on $Y$, where $l_g^i=\codim (X_i^g)$ and $\Omega_{X_{i}^g}^{2n-2l_g^i-\bullet}\llbracket\hbar\rrbracket$ is the de Rham complex of sheaves of smooth formal differential forms on $X_i^g$, is an isomorphism in ${\bf{D}}(\bb C_{Y}\llbracket\hbar\rrbracket)$. 
\end{theorem*}}
By means of Theorem \hyperlink{thma}{(A)}, we 
show that the hypercohomology of the Hochschild chain complex of $\mc{H}_{1, \hbar, X, G}$ is isomorphic to the Chen-Ruan cohomology of the orbifold $Y$ with values in the ring of formal power series $\bb C\llbracket\hbar\rrbracket$ in $k$ indeterminates  (see Corollary \ref{hypercohchenruan}). 
\hypertarget{corb}{\begin{corollary*}[B]
There is an isomorphism of $\bb C\llbracket\hbar\rrbracket$-modules $\bb H^{-\bullet}(Y,  \mc{C}_{\bullet}(\mc{H}_{1, \hbar, X, G}))\rightarrow\h_{CR}^{2n-\bullet}(Y, \bb C)\llbracket\hbar\rrbracket$.
\end{corollary*}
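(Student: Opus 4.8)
The plan is to obtain the isomorphism by applying hypercohomology to the quasi-isomorphism of Theorem~A and then evaluating the resulting right-hand side. Since a quasi-isomorphism of complexes of sheaves induces an isomorphism on hypercohomology, applying $\bb H^{-\bullet}(X/G, -)$ to
\[\bigoplus_{i, g} \chi_{i, \lau{\hbar}}^g : \mc{C}_{\bullet}(\mc{H}_{1, \lau{\hbar}, X, G}) \longrightarrow \Big(\bigoplus_{i, g} j_{i*}^g \Omega_{X_i^g}^{2n - 2l_g^i - \bullet}\lau{\hbar}\Big)^G\]
yields an isomorphism
\[\bb H^{-\bullet}\big(X/G, \mc{C}_{\bullet}(\mc{H}_{1, \lau{\hbar}, X, G})\big) \xrightarrow{\ \sim\ } \bb H^{-\bullet}\Big(X/G, \big(\bigoplus_{i, g} j_{i*}^g \Omega_{X_i^g}^{2n - 2l_g^i - \bullet}\lau{\hbar}\big)^G\Big),\]
so the whole problem reduces to computing the right-hand side.

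I would carry this out in three moves. First, since $G$ is finite and we work in characteristic zero, the functor of $G$-invariants is exact and hence commutes with hypercohomology; writing $\pi : X \to X/G$ for the quotient and using that $\pi_*$ is exact (a finite morphism), the hypercohomology over $X/G$ of the sheaf of invariants equals the $G$-invariant part of the hypercohomology over $X$ of the $G$-equivariant complex $\bigoplus_{i,g} j_{i*}^g \Omega_{X_i^g}^{2n-2l_g^i-\bullet}\lau{\hbar}$. Second, each $j_i^g : X_i^g \hookrightarrow X$ is a closed embedding, so $j_{i*}^g$ is exact and preserves hypercohomology; hence the $(g,i)$-summand contributes $\bb H^{-\bullet}\big(X_i^g, \Omega_{X_i^g}^{2n-2l_g^i-\bullet}\lau{\hbar}\big)$. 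Third, this target is, up to the cohomological shift by $2n - 2l_g^i$, the smooth de Rham complex of $X_i^g$ with coefficients in $\bb C\lau{\hbar}$; by the Poincaré lemma it resolves the constant sheaf, so its hypercohomology is the shifted de Rham (equivalently singular) cohomology $\h^{2n - 2l_g^i - \bullet}(X_i^g, \bb C\lau{\hbar})$. Collecting these, the right-hand side becomes $\big(\bigoplus_{i, g} \h^{2n - 2l_g^i - \bullet}(X_i^g, \bb C\lau{\hbar})\big)^G$.

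It then remains to identify this $G$-invariant sum with the Chen–Ruan cohomology $\h_{CR}^{2n - \bullet}(X/G, \bb C\lau{\hbar})$. The conjugation action of $G$ permutes the pairs $(g,i)$, sending $X_i^g$ to $X_{i'}^{hgh^{-1}}$, so the $G$-invariants of the total sum collapse to a sum over conjugacy classes $[g]$ of the $C(g)$-invariants of $\bigoplus_i \h^{2n - 2l_g^i - \bullet}(X_i^g)$; this is exactly the cohomology of the inertia orbifold underlying the Chen–Ruan construction. The only remaining content is the matching of degree shifts: the summand attached to $X_i^g$ is shifted here by the real codimension $2l_g^i$, whereas the Chen–Ruan grading shifts by twice the degree-shifting number (age) $\iota(g,i)$. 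I would reconcile the two using the relation $\iota(g,i) + \iota(g^{-1}, i) = l_g^i$ together with Poincaré duality on the strata $X_i^g$ and the reindexing $g \mapsto g^{-1}$, which preserves the $G$-invariants since $X^{g^{-1}} = X^g$ and $l_{g^{-1}}^i = l_g^i$.

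The main obstacle is precisely this last bookkeeping step: checking that the codimension shift produced by the trace density construction of Theorem~A agrees, degree by degree and compatibly with the $G$-action, with the degree-shifting numbers built into the Chen–Ruan grading. Everything preceding it — invariance of hypercohomology under quasi-isomorphism, exactness of $G$-invariants and of the closed pushforwards, and the de Rham computation of each stratum's hypercohomology — is formal. Once the graded identification is pinned down, the passage to $\bb C\lau{\hbar}$ coefficients is automatic, since all the functors involved are $\bb C\lau{\hbar}$-linear and commute with the $\lau{\hbar}$-completion.
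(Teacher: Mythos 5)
Your overall route is the same one the paper intends: Corollary B is stated there as an immediate consequence of Theorem \ref{quasiiso}, and the implicit proof is exactly your chain of reductions --- a quasi-isomorphism of complexes of sheaves induces an isomorphism on hypercohomology, $G$-invariants and pushforward along the closed embeddings $j_i^g$ are exact, and each summand $j_{i*}^g\Omega_{X_i^g}^{2n-2l_g^i-\bullet}\lau{\hbar}$ is the (fine, hence acyclic) smooth de Rham complex of $X_i^g$ shifted by $2n-2l_g^i$, so its hypercohomology is $\h^{2n-2l_g^i-\bullet}(X_i^g,\bb C\lau{\hbar})$. All of that is correct and is what the paper relies on.

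The one step you single out as ``the main obstacle'' is, however, resolved incorrectly. You propose to reconcile the codimension shift $2l_g^i$ with the Chen--Ruan age shift $2\iota(g,i)$ via the relation $\iota(g,i)+\iota(g^{-1},i)=l_g^i$, the reindexing $g\mapsto g^{-1}$, and Poincar\'e duality on the strata. Chase the degrees: substituting $2\iota(g,i)=2l_g^i-2\iota(g^{-1},i)$ and then relabelling $[g]\mapsto[g^{-1}]$ leaves a residual shift $+2\iota(g,i)$, and invoking Poincar\'e duality replaces $\h^j(X_i^g)$ by the \emph{dual} of $\h^{2(n-l_g^i)-j}(X_i^g)$ (and requires compactness or compact supports), so it cannot produce the asserted degree-by-degree isomorphism of the two gradings; indeed, for a general finite $G$ the age-graded and codimension-graded sums are genuinely different as graded vector spaces. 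The correct resolution is that no reconciliation is needed: the grading of $\h_{CR}^{2n-\bullet}(X/G,\bb K)$ used in the paper is precisely $\big(\bigoplus_{i,g}\h^{2n-2l_g^i-\bullet}(X_i^g,\bb K)\big)^G$, i.e.\ the degree-shifting number attached to $(g,i)$ is taken to be $l_g^i=\codim_{\bb C}X_i^g$. This is the age of $g$ on the associated symplectic orbifold $T^*X/G$ (on the normal bundle of $(T^*X)^g$ the eigenvalues of $g$ pair up as $\lambda,\lambda^{-1}$, so the age there equals $\codim_{\bb C}X_i^g$), which is the standard convention in this Hochschild-homology context; one can also read it off from the computation in Corollary \ref{eulerchr}, where $\dim_{\bb C}\h_{\textrm{CR}}^{2n-k}(X/G,\bb C)$ is expanded as $\sum_{g,i}\dim\h_{\textrm{dR}}^{2n-2l_g^i-k}(X_i^g)$. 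With that convention your step three already lands exactly on $\h_{CR}^{2n-\bullet}(X/G,\bb C\lau{\hbar})$ after passing to $G$-invariants, and the proof is complete.
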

}
In the special case $\hbar=0$, Corollary \hyperlink{corb}{(B)} is a generalization of a famous result for the Hochschild homology of the $G$-invariant Weyl algebra in \cite{AFLS00}. Furthermore, this corollary implies that the Hochschild chain complex $\mc{C}_{\bullet}(p_*\mc{D}_X\rtimes G)$ on $Y$ possesses a well-defined Euler characteristic. We show that it is equal to the topological Euler characteristic of $Y$ scaled by the order of $G$ (see Corollary \ref{eulerchr}).
\begin{corollary*}[C]
The Euler characteristic  
$\chi(Y, \mc{C}_{\bullet}(p_*\mc{D}_X\rtimes G))$ is invariant under formal deformations of $p_*\mc{D}_X\rtimes G$ and continuous deformations of $Y$. In particular, $\chi(Y, \mc{C}_{\bullet}(p_*\mc{D}_X\rtimes G))=|G|\cdot\chi(Y)$.
\end{corollary*}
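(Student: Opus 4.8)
The plan is to derive everything from the quasi-isomorphism of Theorem~A together with its cohomological consequence Corollary~B, thereby reducing the assertion to a purely topological computation with the inertia data of $X/G$. First I would record that, by Corollary~B, the hypercohomology $\bb H^{-\bullet}(X/G,\mc{C}_\bullet(\mc{H}_{1,\lau{\hbar},X,G}))$ is isomorphic to $\h_{CR}^{2n-\bullet}(X/G,\bb C\lau{\hbar})$; for a compact orbifold the Chen--Ruan cohomology is finite-dimensional and supported in finitely many degrees, so the alternating sum of its graded dimensions is finite and the Euler characteristic $\chi(X/G,\mc{C}_\bullet)$ is well defined. Specializing to $\hbar=0$ gives the corresponding statement for $\mc{D}_X\rtimes G$ with $\bb C$-coefficients.

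For invariance under formal deformations, the key point is that the right-hand side of Corollary~B is $\h_{CR}^{2n-\bullet}(X/G,\bb C)\otimes_{\bb C}\bb C\lau{\hbar}$, whose graded dimensions over $\bb C\lau{\hbar}$ depend neither on $\hbar$ nor on the Cherednik parameter $c$. Hence the Euler characteristic is constant across the whole family $\mc{H}_{1,\lau{\hbar},X,G}$ and coincides with its value at $\hbar=0$, i.e. that of $\mc{D}_X\rtimes G$; this is precisely the claimed invariance under formal deformations. Invariance under continuous deformations of $X/G$ then follows because $\h_{CR}$ is assembled from the ordinary cohomologies of the fixed loci $X^g$ with their induced $C(g)$-actions: under an equivariant continuous deformation these loci vary in locally trivial families by Ehresmann's theorem, so all Betti numbers entering $\h_{CR}$ are locally constant, and therefore so is $\chi$.

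It remains to evaluate the invariant, and here I would argue directly from Theorem~A. Each summand $j_{i*}^g\Omega_{X_i^g}^{2n-2l_g^i-\bullet}$ is, up to a cohomological shift by $2n-2l_g^i=\dim_{\bb R}X_i^g$, the de Rham complex of the stratum $X_i^g$; since this shift is even the sign it contributes is $(-1)^{2n-2l_g^i}=+1$, so the stratum contributes exactly its topological Euler characteristic $\chi(X_i^g)$ to the alternating sum. Summing over the components of $X^g$ and over all $g\in G$ yields $\sum_{g\in G}\chi(X^g)$, and the classical Lefschetz averaging identity $\chi(X/G)=\tfrac{1}{|G|}\sum_{g\in G}\chi(X^g)$ rewrites this as $|G|\cdot\chi(X/G)$, as asserted.

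The hard part will be the bookkeeping of the $G$-action in this last step: the target of Theorem~A is the $G$-invariant subcomplex, so one must verify that passing to invariants and computing hypercohomology on $X/G$ reproduces the signed count $\sum_{g\in G}\chi(X^g)$ with the correct multiplicities --- equivalently that the permutation action on the sectors, twisted by the induced action on each $\h^\bullet(X_i^g)$, contributes exactly as in the Lefschetz fixed-point formula rather than some other weighting of the inertia strata. A secondary technical point is to confirm that, for the generally non-compact strata $X_i^g$, the de Rham Euler characteristic computed by the shifted complex agrees with the topological $\chi(X_i^g)$; this is where compactness of $X/G$, and hence properness of the maps $j_i^g$ together with finiteness of the strata, should be invoked.
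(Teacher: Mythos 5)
Your proposal follows the paper's own route: invoke Corollary~B to identify the hypercohomology with Chen--Ruan cohomology, expand the latter into the shifted de Rham cohomologies of the components $X_i^g$, and take the alternating sum to get $\sum_{g\in G}\sum_i\chi(X_i^g)=|G|\cdot\chi(X/G)$ (which the paper takes as the definition of $\chi(X/G)$), with formal-deformation invariance coming from the fact that the target of the quasi-isomorphism \eqref{formalhypercohomology} is independent of $\hbar$ and of the Cherednik parameter. The ``hard part'' you flag about passing to $G$-invariants is not resolved in the paper either --- its computation simply writes $\h_{\textrm{CR}}^{2n-k}(X/G,\bb C)$ as the full direct sum $\bigoplus_{g,i}\h_{\textrm{dR}}^{2n-2l_g^i-k}(X_i^g)$ without taking invariants --- so your argument is at the same level of detail as, and essentially identical to, the paper's.
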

We remark that in a separate manuscript \cite{Vit20a} we adapt the trace density construction discussed here to the holomorphic and cohomological settings and prove subsequently with its help that the sheaf of twisted formal Cherednik algebras is a universal filtered formal deformation of $p_*\mc{D}_X\rtimes G$ on $Y$ -- until recently an open problem which has resisted a rigorous proof for over a decade.

Finally, mimicking the techniques in \cite{FFS05}, \cite{PPT07} and \cite{RT12}, we prove an algebraic index theorem (see Theorem \ref{algindexthm}) identical to \cite[Theorem 6]{RT12} for the sheaf $\mc{H}_{1, \lau{\hbar}, X, G}$ of $1$-parameter formal Cherednik algebras localized at $\hbar$.  
\hypertarget{thmd}{\begin{theorem*}[D]
For~~$\id\in\Gamma(Y, \mc{H}_{1, \lau{\hbar}, X, G})$ and $H\leq G$, $H\neq\{\id_G\}$, the smooth $(2n-2l)$-form \[\chi_{i, \lau{\hbar}}^H(\id)-\hbar^{n-l}\Big(\hat{A}(R_T)\Ch(\frac{-\Theta}{\hbar})\Ch_{\phi^{\hbar}}(\frac{R_N}{\hbar})\Big)_{n-l}\] 
on the codimension $l$ fixed point submanifold $X_i^H$ is exact.
\end{theorem*}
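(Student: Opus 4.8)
The plan is to follow the strategy of \cite{FFS05}, \cite{PPT07} and \cite{RT12}, reducing the statement to the local algebraic index theorem of \cite[Theorem 6]{RT12} through the Gelfand-Kazhdan formal-geometric presentation of $\mc{H}_{1, \lau{\hbar}, X, G}$ and of the trace density $\chi_{i, \lau{\hbar}}^H$ furnished by \cite{Vit19} and by the preceding sections. The guiding observation is that the identity is intrinsically local: the unit $\id$ is a Hochschild $0$-cycle, so $\chi_{i, \lau{\hbar}}^H(\id)$ is closed by the chain-map property, while the characteristic class $\hat{A}(R_T)\Ch(\tfrac{-\Theta}{\hbar})\Ch_{\phi^{\hbar}}(\tfrac{R_N}{\hbar})$ is closed by Chern-Weil theory; the theorem thus asserts that their difference, a de Rham class on $X_i^H$, vanishes. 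Both forms descend from canonical cocycles in the Lie algebra cohomology of the formal-geometric Lie algebra acting on the fiber of the Harish-Chandra torsor $\coor{\mc{N}}$ over $X_i^H$, and exactness will follow from an identity of these cocycles modulo a coboundary.

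First I would recall the formal-geometric data underlying $\chi_{i, \lau{\hbar}}^H$: the Harish-Chandra torsor over $X_i^H$, its associated flat holomorphic bundle equipped with a Fedosov-type flat connection, and the trace cocycle built from the Feigin-Felder-Shoikhet construction, whose pairing with Hochschild chains defines the trace density. Evaluating $\chi_{i, \lau{\hbar}}^H$ at $\id$ selects the zero-chain and therefore computes the canonical trace of the unit in the local model. By the construction of \cite{Vit19}, this local model is a crossed product of the formal Weyl algebra of the symplectic formal disk by the stabilizer $H$, with the $H$-action carried on the normal formal directions -- exactly the algebra governed by \cite[Theorem 6]{RT12}.

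Second I would invoke that local index theorem, which expresses the trace of the unit as the degree-$(n-l)$ component of $\hat{A}(R_T)\Ch(\tfrac{-\Theta}{\hbar})\Ch_{\phi^{\hbar}}(\tfrac{R_N}{\hbar})$, the three factors originating respectively from the symplectic tangent directions, the characteristic class $\Theta$ of the formal deformation, and the $H$-twisted normal directions whose twisting datum $\phi^{\hbar}$ encodes the $H$-action on the normal bundle. Since $\chi_{i, \lau{\hbar}}^H$ is assembled from the same universal Feigin-Felder-Shoikhet cocycle, this fiberwise identity transfers verbatim, and the two local cochains differ by a Lie-algebra coboundary.

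Finally I would globalize by applying the Gelfand-Kazhdan characteristic map from Lie algebra cohomology to the de Rham complex of $X_i^H$; it sends the coboundary to an explicit de Rham primitive of the difference, establishing exactness. The hard part will be the bookkeeping of normalizations: one must track the Fedosov connection and the induced $H$-action on the normal formal directions through the construction of \cite{Vit19}, verify that the deformation curvature $\Theta$ and twisting $\phi^{\hbar}$ thereby produced coincide with those entering the local formula of \cite{RT12}, and confirm that the degree-$(n-l)$ truncation -- the only part contributing to a $2n-2l$-form -- agrees on both sides.
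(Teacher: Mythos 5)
Your overall architecture --- both sides are closed forms, the identity is reduced to an equality of relative Lie algebra cocycles on the fiber of the Harish--Chandra torsor, and the result is globalized through the Gelfand--Kazhdan characteristic map --- is the same as the paper's. The gap is in your second step. You identify the local model as ``a crossed product of the formal Weyl algebra of the symplectic formal disk by the stabilizer $H$ \dots\ exactly the algebra governed by \cite[Theorem 6]{RT12}'' and propose to invoke that theorem fiberwise. But the local model here is $\mc{A}_{n-l,l}^{H,\lau{\hbar}}=\widehat{\mc{D}}_{n-l}^{\lau{\hbar}}\hat\otimes_{\bb K}\widehat{H}_{1,\lau{\hbar}}(\bb C^l,H)$: the transverse factor is the \emph{formal rational Cherednik algebra}, a nontrivial deformation of $\widehat{\mc{D}}_l\rtimes H$, and the normal factor of the cocycle $\psi_{2n-2l}^{\hbar}$ is a trace $\phi^{\hbar}$ on that algebra, not a Fedosov twisted trace on a Weyl crossed product. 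The fiberwise identity therefore does not ``transfer verbatim.'' Indeed, if it did, the resulting formula could not distinguish $\mc{D}_X\rtimes G\lau{\hbar}$ from $\mc{H}_{1,\lau{\hbar},X,G}$, contradicting the corollary that follows the theorem: the value of $L$ at the identity differs between the two precisely because $\Ch_{\phi^{\hbar}}$ is evaluated on the image of $\mf{z}$ under $\varphi_{\lau{\hbar}}$, which contains the reflection terms $\sum_{s\in\mc{S}}\frac{2\hbar c(s)}{1-\lambda_s}\lambda_{A,s}(1-s)$.

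What is missing is the analogue of \cite[Theorem 3]{RT12} for this new local model, namely Proposition \ref{rrh}: $[\ev_1\Psi_{2n-2l}]=(-1)^{n-l}\mc{X}\big((\hat{A}_{\hbar}\Ch_{\phi^{\hbar}}\Ch)_{n-l}\big)$. Proving it requires knowing that the relative Chern--Weil map $\mc{X}\colon S^{q}(\mf{h}^*)^{\mf{h}}\to\h^{2q}(\mf{g},\mf{h};\bb K)$ is an isomorphism in the relevant range for $\mf{g}=\mf{gl}_N(\mc{A}_{n-l,l}^{H,\lau{\hbar}})$ (Propositions \ref{cwhom1} and \ref{cwhom2}), which in turn rests on the Loday--Quillen--Tsygan theorem applied to the cyclic homology of $\mc{A}_{n-l,l}^{H,\lau{\hbar}}\otimes_{\bb C}\bb C[\varepsilon]$ computed in Corollary \ref{hhchofhcmodule} --- new input (Morita equivalence with the spherical subalgebra, Brylinski's mixed complex) with no counterpart in a citation of \cite{RT12}. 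Once Proposition \ref{rrh} is available, the endgame you sketch is exactly the paper's: normalize $\pr(A)=0$, compute $C(A\xi_1,A\xi_2)=R_T+R_N-\Theta$ from the flatness of $\varphi^*\nabla^{\infty}$, evaluate $\mc{X}(P)$ on $A^{\wedge 2n-2l}$, and read off $P(R_T+R_N-\Theta)$ modulo exact forms.
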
}
When $X$ is a compact manifold, one can integrate the differential form in Theorem \hyperlink{thmd}{(D)} over the fixed point submanifold. The integral yields a value for the trace on the global section algebra of the sheaf of formal Cherednik algebras evaluated at the identity. We find out that in contrast to the Euler characteristic, the traces corresponding to the various nontrivial parabolic subgroups of $G$ distinguish between trivial deformations $\mc{D}_X\rtimes G\lau{\hbar}$ and nontrivial deformations $\mc{H}_{1, \lau{\hbar}, X, G}$ of $p_*\mc{D}_X\rtimes G$. 
\subsection*{Outline of the paper}
The rest of the paper is organized as follows. In Section \ref{sec2}, we discuss complex reflections and the definitions of rational Cherednik algebras and sheaves of Cherednik algebras. In Section \ref{sec3}, we compute the Hochschild and cyclic homology of algebras which are needed in Section \ref{sec4} and Section \ref{sec5}. In Section \ref{sec4}, we introduce the trace density morphisms for the sheaf of (formal) Cherednik algebras associated to parabolic subgroups of $G$. We construct a quasi-isomorphism by means of which we calculate the hypercohomology of the Hochschild chain complex $\mc{C}_{\bullet}(\mc{H}_{1, \hbar, X, G})$ of the sheaf of formal Cherednik algebras $\mc{H}_{1, \hbar, X, G}$. We express the Euler characteristic of the Hochschild chain complex of $p_*\mc{D}_X\rtimes G$ in terms of the Euler characteristic of $Y$. In Section \ref{sec5}, we prove an algebraic index theorem. 
\section{Preliminaries}
\label{sec2}
\subsection{Irreducible well-generated complex reflection groups}
The material in this section is borrowed from the specialized literature on complex reflections. We follow mostly \cite{kane13} and \cite{LT09}. 

Let $\mathfrak{h}$ be a finite $n$-dimensional complex vector space and let $\mathfrak{h}^*$ be its dual.  A semisimple endomorphism $s$ of $\mathfrak{h}$ is called a \emph{complex reflection} in $\mathfrak{h}$ if $\rk(\id_{\mathfrak{h}}-s)=1$. The fixed point subspace $\mathfrak{h}^s:=\ker(\id_{\mathfrak{h}}-s)$ of the complex reflection $s\in\End(\mathfrak{h})$ is a hyperplane called a \emph{reflecting hyperplane} of $s$. 
Suppose that $G$ is a finite subgroup of $\GL(\mathfrak{h})$ and let $\mathcal{S}$ denote the set of complex reflections in $\mathfrak{h}$ contained in $G$. The group $G$ is called a \emph{complex reflection group} if it is generated by $\mathcal{S}$. Given a complex reflection $s\in\mathcal{S}$ in $\mathfrak{h}$ we denote its unique nontrivial eigenvalue by $\lambda_s^{\vee}$ and by $\alpha_s^{\vee}\in\mathfrak{h}$ an eigenvector of $s$ in $\mathfrak{h}$ corresponding to $\lambda_s^{\vee}$ which we call a \emph{root}. Similarly, we designate by $\lambda_s$ the unique nontrivial eigenvalue of $s$ in $\mathfrak{h}^*$ and by $\alpha_s\in\mathfrak{h}^*$ an eigenvector of $s$ in $\mathfrak{h}^*$ corresponding to $\lambda_s$, which we call \emph{coroot}. Since $G$ is finite, all complex reflections $s\in\mathcal{S}$ have a finite order. Hence, the corresponding eigenvalues $\lambda_s^{\vee}$ and $\lambda_s$ are (not necessarily primitive) roots of unity. 

If a complex reflection group $G\subset\GL(\mathfrak h)$ is such that $\mathfrak h$ is a simple left $\bb CG$-module, we call $G$ an \emph{irreducible complex reflection group}. The following theorem shows that the study of complex reflection groups reduces to the study of irreducible complex reflection groups. We formulate the theorem in a slightly more general manner than in \cite{LT09} which suits our purposes in this note better.          
\begin{theorem}[Theorem 1.27, \cite{LT09}]
\label{thm1.1.1}
Suppose that $G$ is a finite complex reflection group on $\mathfrak{h}$. Then $\mathfrak{h}$ is the direct sum of subspaces $\mathfrak{h}_1, \mathfrak{h}_2\dots, \mathfrak{h}_m$ such that the subgroup $G_i$ of $G$, generated by complex reflections whose roots lie in $\mathfrak{h}_i$, acts irreducibly on  $\mathfrak{h}_i$ for every $i=1, \dots, m$, and $G\cong G_1\times G_2\times\dots\times G_m$. If $\mathfrak{u}$ is not fixed pointwise by every element of $G$, then $\mathfrak{u}=\mathfrak{h}_i$ for some $i$.
\end{theorem}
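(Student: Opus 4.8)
\section*{Proof proposal}

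The plan is to combine complete reducibility of $\mathfrak{h}$ as a $\mathbb{C}G$-module with the rigidity that a single reflection imposes on invariant subspaces. First I would invoke Maschke's theorem: since $G$ is finite, $\mathbb{C}G$ is semisimple, so $\mathfrak{h}$ splits as a direct sum $\mathfrak{h}=\bigoplus_{i=1}^m\mathfrak{h}_i$ of irreducible $\mathbb{C}G$-submodules, where any summand of the fixed subspace $\mathfrak{h}^G$ is further broken into lines. The elementary but crucial input is the following dichotomy: if $V\subseteq\mathfrak{h}$ is invariant under a reflection $s$, then because $s$ is semisimple with the eigenvalue $\lambda_s^{\vee}$ occurring simply and $1$ occurring with multiplicity $n-1$, the space $V$ is spanned by eigenvectors of $s|_V$, and hence either $\alpha_s^{\vee}\in V$ or else $V\subseteq\mathfrak{h}^s$.

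Next I would show that each root localizes to a single factor. Writing $\alpha_s^{\vee}=\sum_i v_i$ with $v_i\in\mathfrak{h}_i$ and applying $s$, the identity $s\alpha_s^{\vee}=\lambda_s^{\vee}\alpha_s^{\vee}$ together with the $s$-invariance of each $\mathfrak{h}_i$ forces $sv_i=\lambda_s^{\vee}v_i$ for every $i$; since the $\lambda_s^{\vee}$-eigenspace of $s$ is the single line $\mathbb{C}\alpha_s^{\vee}$, at most one $v_i$ can be nonzero, so $\alpha_s^{\vee}\in\mathfrak{h}_i$ for a unique $i$. This partitions the reflection set as $\mathcal{S}=\bigsqcup_i\mathcal{S}_i$, with $\mathcal{S}_i=\{s\in\mathcal{S}:\alpha_s^{\vee}\in\mathfrak{h}_i\}$, and I define $G_i:=\langle\mathcal{S}_i\rangle$, which is precisely the subgroup of the statement. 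By the dichotomy, each $s\in\mathcal{S}_i$ fixes $\mathfrak{h}_j$ pointwise for $j\neq i$ (as $\alpha_s^{\vee}\notin\mathfrak{h}_j$), so $G_i$ acts trivially off $\mathfrak{h}_i$.

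From here the product structure follows formally. Since $G=\langle\mathcal{S}\rangle$ and $\mathcal{S}=\bigsqcup_i\mathcal{S}_i$, the subgroups $G_i$ generate $G$; and because $G_i$ acts as the identity on every $\mathfrak{h}_j$ with $j\neq i$ while preserving $\mathfrak{h}_i$, a direct computation on each summand shows that the generators of $G_i$ and of $G_j$ commute for $i\neq j$, whence $G_i$ and $G_j$ commute elementwise. This produces a surjection $G_1\times\cdots\times G_m\twoheadrightarrow G$; it is injective because the kernel consists of tuples with trivial product, each component of which then lies in $G_i\cap\langle G_j:j\neq i\rangle$ and thus acts trivially on $\mathfrak{h}_i$ (being in the second factor) and on every $\mathfrak{h}_j$, $j\neq i$ (being in $G_i$), hence trivially on $\mathfrak{h}$, so equals the identity by faithfulness. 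Therefore $G\cong G_1\times\cdots\times G_m$. Irreducibility of $G_i$ on $\mathfrak{h}_i$ is then immediate: the $G$-action on $\mathfrak{h}_i$ factors through $G_i$ since the remaining factors act trivially, so the $G$-irreducible $\mathfrak{h}_i$ is $G_i$-irreducible.

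Finally, for the canonicity clause I would argue that if an invariant subspace $\mathfrak{u}$ is not fixed pointwise by $G$, then, $G$ being generated by $\mathcal{S}$, some $s$ acts nontrivially on $\mathfrak{u}$, i.e.\ $\mathfrak{u}\not\subseteq\mathfrak{h}^s$; the dichotomy gives $\alpha_s^{\vee}\in\mathfrak{u}$, and with $\alpha_s^{\vee}\in\mathfrak{h}_i$ this makes $\mathfrak{u}\cap\mathfrak{h}_i$ a nonzero submodule of two irreducibles, forcing $\mathfrak{u}=\mathfrak{h}_i$. I expect the main obstacle to be conceptual rather than computational, namely controlling the considerable freedom in the irreducible decomposition: a priori the presence of isomorphic constituents or of a nonzero fixed subspace $\mathfrak{h}^G$ could spoil the clean correspondence between summands and reflection subgroups. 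The root-localization step is exactly what rules out any root spreading across isomorphic factors, and it simultaneously explains why the canonicity assertion must be, and is, restricted to the non-pointwise-fixed summands; the arbitrarily chosen trivial lines inside $\mathfrak{h}^G$ contribute the factors with $G_i=\{\,\mathrm{id}\,\}$, while the Schur-type argument pins down each nontrivial $\mathfrak{h}_i$ uniquely.
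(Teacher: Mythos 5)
The paper does not prove this statement: it is imported verbatim as Theorem 1.27 of \cite{LT09}, so there is no in-paper argument to compare against. Your proof is correct and is essentially the standard textbook argument: the dichotomy that an $s$-invariant subspace either contains $\alpha_s^{\vee}$ or lies in $\mathfrak{h}^s$, the localization of each root to a unique irreducible summand, and the resulting elementwise commutation of the subgroups $G_i$. One small point worth flagging: as transcribed in the paper the final clause omits the hypothesis that $\mathfrak{u}$ is an irreducible $G$-submodule (without it the claim fails, e.g.\ for $\mathfrak{u}=\mathfrak{h}$); you correctly supply both invariance (needed to apply the dichotomy) and irreducibility (needed to conclude $\mathfrak{u}=\mathfrak{h}_i$ from $\mathfrak{u}\cap\mathfrak{h}_i\neq 0$), which is how the source states the theorem.
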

It follows from this theorem that $\mathfrak{h}=\mathfrak{h}^G\oplus\mathfrak{h}_1\oplus\dots\oplus\mathfrak{h}_k$, where the $\mathfrak{h}_i$ are the nontrivial simple left $\bb CG$-modules. The \emph{support} of a complex reflection group $G\subset\GL(\mathfrak{h})$, denoted $\supp(G)$, is the algebraic complement of the subspace $\mathfrak{h}^G$. A direct consequence of Theorem \ref{thm1.1.1} is the ensuing lemma.
\begin{lemma}
\label{supp(G)}
The support of a complex reflection group $G\subset\GL(\mathfrak{h})$ is spanned by the roots of the complex reflections in $G$.
\end{lemma}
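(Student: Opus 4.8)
The plan is to prove the two inclusions $\Span\{\alpha_s^{\vee} : s\in\mathcal{S}\}\subseteq\supp(G)$ and $\supp(G)\subseteq\Span\{\alpha_s^{\vee} : s\in\mathcal{S}\}$ separately, working throughout with the $G$-invariant decomposition $\mathfrak{h}=\mathfrak{h}^G\oplus\supp(G)$ and the finer decomposition of $\supp(G)$ supplied by Theorem \ref{thm1.1.1}.

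For the first inclusion I would fix a reflection $s\in\mathcal{S}$ and decompose its root as $\alpha_s^{\vee}=v_0+v_1$ with $v_0\in\mathfrak{h}^G$ and $v_1\in\supp(G)$, both summands being $G$-submodules. Applying $s$ and comparing the components in $\mathfrak{h}^G$, on which $s$ acts as the identity, the eigenvalue relation $s\alpha_s^{\vee}=\lambda_s^{\vee}\alpha_s^{\vee}$ gives $(\lambda_s^{\vee}-1)v_0=0$; since $\lambda_s^{\vee}\neq 1$ the component $v_0$ vanishes and hence $\alpha_s^{\vee}\in\supp(G)$. This places the span of all roots inside $\supp(G)$.

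The reverse inclusion is the substantive part. By Theorem \ref{thm1.1.1} I would write $\supp(G)=\mathfrak{h}_1\oplus\dots\oplus\mathfrak{h}_k$, where each $\mathfrak{h}_i$ is a non-trivial simple $\bb{C}G$-module and the subgroup $G_i$, generated by the reflections whose roots lie in $\mathfrak{h}_i$, acts irreducibly on $\mathfrak{h}_i$. It then suffices to show $\mathfrak{h}_i\subseteq\Span\{\alpha_s^{\vee} : s\in\mathcal{S}\}$ for each $i$. Setting $W_i:=\Span\{\alpha_s^{\vee} : s\in\mathcal{S},\ \alpha_s^{\vee}\in\mathfrak{h}_i\}\subseteq\mathfrak{h}_i$, the crucial observation is that $W_i$ is $G_i$-invariant: since $\rk(\id_{\mathfrak{h}}-t)=1$ with image $\bb{C}\alpha_t^{\vee}$, every reflection $t$ can be written as $tv=v+\ell_t(v)\alpha_t^{\vee}$ for a suitable linear functional $\ell_t$, whence $t$ carries each generator $\alpha_s^{\vee}$ of $W_i$ back into $W_i$; as such reflections generate $G_i$, the subspace $W_i$ is stable. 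Moreover $W_i\neq 0$, for if no reflection had its root in $\mathfrak{h}_i$ then $G_i$ would be trivial, forcing $\mathfrak{h}_i$ to be a one-dimensional trivial module and contradicting its non-triviality.

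Having exhibited $W_i$ as a non-zero $G_i$-invariant subspace of the irreducible module $\mathfrak{h}_i$, irreducibility of $\mathfrak{h}_i$ forces $W_i=\mathfrak{h}_i$, and summing over $i$ yields $\supp(G)\subseteq\Span\{\alpha_s^{\vee} : s\in\mathcal{S}\}$. The main obstacle I anticipate lies entirely in this reverse inclusion, and within it in the verification that $W_i$ is simultaneously non-zero and $G_i$-stable so that the irreducibility of $\mathfrak{h}_i$ can be invoked; the forward inclusion and the reduction through Theorem \ref{thm1.1.1} are essentially bookkeeping.
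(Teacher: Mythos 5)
Your proof is correct, but the substantive inclusion $\supp(G)\subseteq\Span_{\mc S}\{\alpha_s^{\vee}\}$ is established by a genuinely different mechanism than in the paper. The paper argues constructively: given $v\in\supp(G)$ it produces $g=s_1\cdots s_r$ with $v\in\Ima(1-g)$ and telescopes $(1-s_1\cdots s_r)x=(1-s_r)x+(1-s_{r-1})(s_rx)+\cdots+(1-s_1)(s_2\cdots s_rx)$, each summand lying in $\Ima(1-s_i)=\bb C\alpha_{s_i}^{\vee}$; this avoids any appeal to irreducibility but hinges on the step that $v\notin\mathfrak{h}^g$ forces $v\in\Ima(1-g)$, which the paper does not really justify. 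You instead route everything through Theorem \ref{thm1.1.1}: inside each non-trivial simple summand $\mathfrak{h}_i$ the span $W_i$ of the roots lying there is $G_i$-stable (via $tv=v+\ell_t(v)\alpha_t^{\vee}$) and non-zero, so irreducibility of $\mathfrak{h}_i$ under $G_i$ forces $W_i=\mathfrak{h}_i$. This uses more of the structure theorem but buys a cleaner argument that sidesteps the $\Ima(1-g)$ issue entirely; the only implicit point is that in your non-vacuity step the conclusion \emph{trivial $G$-module} (not merely trivial $G_i$-module) requires observing that a reflection rooted in $\mathfrak{h}_j$, $j\neq i$, fixes $\mathfrak{h}_i$ pointwise, since $tv-v\in\bb C\alpha_t^{\vee}\cap\mathfrak{h}_i=0$. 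Your forward inclusion, comparing $\mathfrak{h}^G$-components in $s\alpha_s^{\vee}=\lambda_s^{\vee}\alpha_s^{\vee}$, is also more careful than the paper's one-line assertion that no root lies in $\mathfrak{h}^G$, which by itself would not exclude a root having a non-zero component in $\mathfrak{h}^G$.
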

\begin{proof}
Theorem \ref{thm1.1.1} yields $\mathfrak{h}=\mathfrak{h}^G\oplus\supp(G)$. Take a vector $v\in\supp(G)$. There exists at least one element $g\in G$ such that $g=s_1\dots s_r$ for some complex reflections $s_1, \dots, s_r$ and  $v\notin\mathfrak{h}^g$. It follows that $v\in\Ima(1-g)$. Then for some $x\in\mathfrak{h}$, we have that 
 \begin{align*}
 v=(1-s_1\dots s_r)x&=x-s_1\dots s_rx\\
 &=(1-s_r)x+s_rx-s_1\dots s_rx\\
 &=(1-s_r)x+(1-s_{r-1})(s_rx)+\dots+(1-s_1)(s_2\dots s_rx)\\
 &=\mu_r\alpha_{s_r}^{\vee}+\mu_{r-1}\alpha_{s_{r-1}}^{\vee}+\dots+\mu_1\alpha_{s_1}^{\vee}
 \end{align*}
 where $\mu_1, \dots, \mu_r\in\bb C$. 
Hence, $v\in\Span_{\mc{S}}\{\alpha_s^{\vee}\}$ and consequently $\supp(G)\subset\Span_{\mc{S}}{\alpha_s^{\vee}}$. Conversely, if $v\in\Span_{\mc{S}}\{\alpha_s^{\vee}\}$, then $v\in\supp{G}$ by the fact that by default no root $\alpha_s^{\vee}$ lies in $\mathfrak{h}^G$. Thus, $\Span_{\mc{S}}{\alpha_s^{\vee}}\subset\supp{G}$.   
\end{proof}
The rank of a reflection group $G$, denoted $\rk(G)$, is the dimension of its support. If $G$ has a generating set $\mc{S}$ of complex reflections whose cardinality is equal to the rank of $G$, then $G$ is called a \emph{well-generated} complex reflection group. Irreducible well-generated complex reflection groups $G$ are of particular interest to our work since they admit so called \emph{Coxeter elements}. Let $\mc{S}^{\*}$ denote the set of hyperplanes in $\mathfrak{h}$ fixed by some complex reflection. Set $N:=|\mc{S}|$ and $N^{\*}=|\mc{S}^*|$. The \emph{Coxeter number} of $G$ is the constant $h:=\frac{N+N^{\*}}{\dim\mathfrak{h}}=d_{\dim\mf{h}}$ where $d_{\dim\mf{h}}$ is the highest degree of $G$. A $\zeta$-regular element in $G$ is a group element $g\in G$ with an eigenvalue $\zeta\in\bb C$ and an  eigenvector $v\in\mathfrak h$ for $\zeta$ which is not contained in a hyperplane in $\mc{S}^{\*}$. A \emph{Coxeter element} is a $\zeta_h$-regular element in $G$ where $\zeta_h$ is a primitive $h$-th root of unity. 

The next two lemmas are easy statements for which we did not find a reference. Therefore, we append a short proof thereof. The first lemma gives a condition under which a complex reflection group admits a Coxeter element.
\begin{lemma}
\label{coxeter}
An irreducible well-generated complex reflection group $G$ has a Coxeter element.
\end{lemma}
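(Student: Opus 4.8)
The plan is to deduce the existence of a Coxeter element from Springer's theory of regular elements, reducing everything to the assertion that the Coxeter number $h$ is a \emph{regular number} for $G$. Since a Coxeter element is by definition a $\zeta_h$-regular element, it suffices to exhibit an element of $G$ with a regular eigenvector whose eigenvalue is a primitive $h$-th root of unity; Springer's results guarantee such an element as soon as $h$ is regular.

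First I would bring in the degrees $d_1\le\dots\le d_n$ and the codegrees $d_1^{\ast},\dots,d_n^{\ast}$ of $G$, where $n=\dim\mathfrak{h}$, together with the classical identities $N=\sum_i(d_i-1)$ and $N^{\ast}=\sum_i(d_i^{\ast}+1)$ expressing the number of reflections and of reflecting hyperplanes in terms of this data (\emph{cf}. \cite{kane13}). Substituting into the definition $h=(N+N^{\ast})/n$ gives $h=\frac1n\sum_i(d_i+d_i^{\ast})$ with no hypothesis on $G$. The point at which well-generatedness enters is the duality identity for irreducible well-generated groups, namely that the degrees and codegrees pair up so that $d_i+d_{n+1-i}^{\ast}=d_n$ for every $i$. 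Summing over $i$ forces $\sum_i(d_i+d_i^{\ast})=n\,d_n$, and therefore $h=d_n$ is the largest degree.

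Next I would invoke the Lehrer–Springer regularity criterion, whose ingredients are assembled in Chapter~32 of \cite{kane13}: a positive integer $d$ is a regular number for $G$ precisely when $d$ divides as many of the degrees as of the codegrees. I apply this with $d=h=d_n$. Since $0<d_i\le d_n$, the relation $d_n\mid d_i$ holds only when $d_i=d_n$, so the number of degrees divisible by $d_n$ equals the multiplicity $m$ of $d_n$ among the degrees. On the codegree side the duality identity gives $d_{n+1-i}^{\ast}=d_n-d_i$, and since the smallest degree satisfies $d_1\ge 2$ the largest codegree is $d_n-d_1<d_n$; hence a codegree is divisible by $d_n$ exactly when it vanishes, which happens for the $m$ indices with $d_i=d_n$. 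The two counts coincide, so $h$ is a regular number, a primitive $h$-th root of unity $\zeta_h$ is a regular eigenvalue, and any element realizing it is the desired Coxeter element.

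The main obstacle is not the final combinatorial comparison, which is immediate, but the justification of the two structural inputs on which it rests: the identity $h=d_n$, which uses the characterization of well-generated groups as duality groups, and the regularity criterion itself, which is proved through Springer's analysis of how a regular element permutes the fundamental invariants and acts on $\mathfrak{h}$. As a consistency check I would verify agreement with the preceding lemma: Springer's description shows that a $\zeta_h$-regular element acts on $\mathfrak{h}$ with eigenvalues $\zeta_h^{\,d_i-1}$, and because $2\le d_i\le h$ each exponent $d_i-1$ lies strictly between $0$ and $h$, so $1$ never occurs as an eigenvalue, exactly as asserted there.
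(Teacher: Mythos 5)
Your proof is correct, but it takes a genuinely different route from the paper's. The paper's argument is a two-step citation: it invokes \cite[Corollary 31-1A]{kane13} to produce an element with eigenvalue $\zeta_h$ (using that $h$ equals the largest degree, which the paper asserts ``by default''), and then cites \cite[Theorem 1.3]{reiner17} as a black box to upgrade that element to a regular one, i.e.\ to guarantee an eigenvector avoiding all reflecting hyperplanes. You instead prove the regularity of $h$ directly: you derive $h=d_n$ from the identities $N=\sum_i(d_i-1)$, $N^{\ast}=\sum_i(d_i^{\ast}+1)$ together with the duality identity $d_i+d_{n+1-i}^{\ast}=d_n$ for irreducible well-generated groups, and then verify the Lehrer--Springer criterion (a number is regular iff it divides equally many degrees and codegrees) by the counting argument with the multiplicity $m$ of $d_n$. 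Both proofs ultimately rest on nontrivial imported theorems --- yours on the well-generated-equals-duality characterization and the Lehrer--Springer criterion, the paper's on the Springer-type eigenvalue criterion and the regularity theorem of \cite{reiner17} --- but your version is more self-contained in two respects: it actually justifies the identity $h=d_n$ rather than taking it as definitional, and it exposes why the largest degree is a regular number instead of citing that fact. Your closing consistency check (eigenvalues $\zeta_h^{d_i-1}$ with $2\le d_i\le h$, hence never $1$) correctly recovers the content of the preceding lemma. The only caveat is the degenerate rank-zero case, which both you and the paper implicitly exclude; for a genuinely irreducible nontrivial $G$ one has $d_1\ge 2$ as you use.
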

\begin{proof}
According to \cite[Corollary $31$-$1A$]{kane13} for a primitive $d$-th root of unity $\zeta$ there exists an element $c\in G$ having $\zeta$ as an eigenvalue if and only if $d$ divides a degree $d_i$ for some $i=1, \dots, \dim\mathfrak{h}$. Let $\zeta_h$ be a primitive $h$-th root of unity. Since by definition, $h=d_{\dim\mathfrak{h}}$, there is an element $c\in G$ with $\zeta_h$ as an eigenvalue. Finally, by \cite[Theorem $1.3$, $i)$ and $iii)$]{reiner17}, the group element $c$ is a Coxeter element. 
\end{proof}
The succeeding lemma demonstrates that a Coxeter element in an irreducible well-generated complex reflection group has no eigenvalues equal to one.
 \begin{lemma}
 \label{nouniteigenvalue}
A Coxeter element $c$ in an irreducible well-generated complex reflection group $G\subset\GL(\mathfrak{h})$ with a Coxeter number $h$ is semisimple and has no eigenvalue equal to $1$.
 \end{lemma}
 \begin{proof}
 Semisimplicity of $c$ follows from the fact that linear endomorphisms of finite order over the complex field are diagonalizable. As $c$ is $\zeta_h$-regular, where $\zeta_h$ is a $h$-th primitive root of unity, by \cite[Theorem 4.2, $v)$]{Spr74}, the eigenvalues of $c$ are $\zeta_h^{1-d_1}, \dots, \zeta_h^{1-d_{\dim\mf{h}}}$ where $d_1\leq\cdots\leq d_{\dim\mf{h}}$ are the degrees of $G$. Since by definition, $d_{\dim\mf{h}}=h$, the Coxeter number $h$ is coprime with $1-d_1, \dots, 1-d_{\dim\mf{h}}$. So, no eigenvalue of $c$ is equal to $1$.  
\end{proof}
\subsection{Rational Cherednik algbera}
\label{rational}
To the data $\mathfrak{h}$, $\mathfrak{h}^*$ and $G$ one attaches the rational Cherednik algebra $H_{t, c}(\mathfrak{h}, G)$ (see, e.g., Section $2.6$ in \cite{Eti04}) in the following way.
\begin{definition}
\label{cherednik}
Let $t$ be a complex parameter and let $c\in\mathbb{C}[\mathcal{S}]^{\ad G}$, where $\ad$ refers to the adjoint action of $G$ on itself, be a class function on $\mc{S}$. The rational Cherednik algebra $H_{t, c}(\mathfrak{h}, G)$ is defined as the quotient of the smash-product algebra $T^{\bullet}(\mathfrak{h}\otimes\mathfrak{h}^*)\rtimes G$ by the ideal generated by
\begin{align*}
&gug^{-1}-\presup{g}{u},\quad gyg^{-1}-\presup{g}{y}, \quad[u, u'], \quad [y, y'], \quad[u, y]-t\left(u, y\right)-\sum_{s\in\mathcal{S}}c(s)\left(u, \alpha_s\right)\left(y, \alpha_s^{\vee}\right)s,
\end{align*} 
for all $u, u'\in\mathfrak{h}$, $y, y'\in\mathfrak{h}^*$ where $\presup{g}{(\cdot)}$ denotes the action of $G$ on $\mf{h}$ and $\mf{h}^*$, respectively.
\end{definition}
We can conveniently think of the class function $c$ on $\mc{S}$ as a set of $k$ complex-valued parameters $c_1, \dots, c_k$ where $k$ is the number of conjugacy classes of elements of $\mc{S}$ in $G$. The algebra $H_{t, c}(\mathfrak{h}, G)$ has a natural increasing filtration $F^{\bullet}$, called geometric filtration, given by 
$\deg(\mathfrak{h}^*)=\deg(G)=0$, $\deg(\mathfrak{h})=1$. We define the \emph{degree-wise completion} $\widehat{H}_{t, c}(\mathfrak{h}, G)$ of $H_{t, c}(\mathfrak{h}, G)$ as the scalar extension of the $\bb C[\mf{h}]$-module $H_{t, c}(\mathfrak{h}, G)$ to the ring of formal functions on the formal neighborhood of zero in $\mathfrak{h}$, that is, $\widehat{H}_{t, c}(\mathfrak{h}, G):=\bb{C}[[\mathfrak{h}]]\otimes_{\bb{C}[\mathfrak{h}]}H_{t, c}(\mathfrak{h}, G)$. The completion $\widehat{H}_{t, c}(\mathfrak{h}, G)$ inherits the geometric filtration $F^{\bullet}$ from $H_{t, c}(\mathfrak{h}, G)$ by the rule 
$F^i\widehat{H}_{t, c}(\mathfrak{h}, G):=\bb{C}[[\mathfrak{h}]]\otimes_{\bb{C}[\mathfrak{h}]}F^iH_{t, c}(\mathfrak{h}, G)$. The following theorem is a special case of \cite[Theorem 6.1]{AFLS00}.  
\begin{theorem}
\label{hhcher}
There is an isomorphism of complex vector spaces
\begin{equation*}
HH_j(\mathcal{D}(\mathfrak{h})\rtimes G, \mathcal{D}(\mathfrak{h})\rtimes G)\cong HH^{2n-j}(\mathcal{D}(\mathfrak{h})\rtimes G, \mathcal{D}(\mathfrak{h})\rtimes G)\cong\bb{C}^{a_j},
\end{equation*} 
where $a_j$ is the number of conjugacy classes of elements in $G$ having eigenvalue $1$ with multiplicity $j$. 
\end{theorem}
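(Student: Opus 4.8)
The plan is to deduce the statement from the Alev--Farinati--Lambre--Solotar computation by exploiting that $\mathcal{D}(\mathfrak{h})\rtimes G$ is a crossed product of the Weyl algebra $A:=\mathcal{D}(\mathfrak{h})$ by the finite group $G$, and that $\mathrm{char}\,\mathbb{C}=0$ makes $|G|$ invertible. First I would invoke the standard $G$-grading of the Hochschild complex of a smash product to obtain the decomposition
$$HH_\bullet(A\rtimes G)\cong\Big(\bigoplus_{g\in G}HH_\bullet(A,A_g)\Big)^G\cong\bigoplus_{[g]}HH_\bullet(A,A_g)^{Z(g)},$$
where $A_g$ denotes $A$ regarded as an $A$-bimodule with right multiplication twisted by $g\in G\subset\GL(\mathfrak{h})$, the outer sum runs over conjugacy classes $[g]$ with centralizer $Z(g)$, and $G$ acts by simultaneously conjugating the algebra and permuting the twists. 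The same statement holds verbatim for Hochschild cohomology. This transfers the problem to computing the twisted Hochschild (co)homology of the Weyl algebra and controlling the residual $Z(g)$-action on it.

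Second, I would compute $HH_\bullet(A,A_g)$ for a single $g$. Since $G$ is finite, $g$ is semisimple, so $\mathfrak{h}$ splits as $\mathfrak{h}=\mathfrak{h}^g\oplus\mathfrak{h}'$ into the fixed subspace and the sum of the non-trivial eigenspaces, on which $1-g$ is invertible. Correspondingly $A\cong\mathcal{D}(\mathfrak{h}^g)\otimes\mathcal{D}(\mathfrak{h}')$ and $A_g\cong\mathcal{D}(\mathfrak{h}^g)\otimes\mathcal{D}(\mathfrak{h}')_g$, so the K\"unneth formula reduces the computation to two factors. For the fixed factor the twist is trivial and the classical fact that $HH_\bullet(\mathcal{D}(V))=\mathbb{C}$ is concentrated in top degree $2\dim V$ applies. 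For the moving factor I would use the Koszul bimodule resolution of the Weyl algebra: twisting by a $g$ with $1-g$ invertible makes all the Koszul differentials isomorphisms except at one end, so that $HH_\bullet(\mathcal{D}(\mathfrak{h}'),\mathcal{D}(\mathfrak{h}')_g)=\mathbb{C}$ is concentrated in degree $0$ (the one-dimensional space of twisted traces). Combining the two factors, $HH_\bullet(A,A_g)$ is one-dimensional and sits in homological degree $2\dim\mathfrak{h}^g$, which is exactly the multiplicity $j$ of the eigenvalue $1$ of $g$ on $\mathfrak{h}\oplus\mathfrak{h}^*$. Running the dual Koszul computation for cohomology places $HH^\bullet(A,A_g)$ in degree $2\dim\mathfrak{h}'=2n-j$, which accounts for the shift in the statement and reflects the $2n$-dimensional Van den Bergh duality of $A\rtimes G$.

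Finally, I would check that $Z(g)$ acts trivially on the one-dimensional space $HH_{2\dim\mathfrak{h}^g}(A,A_g)$, so that passing to invariants does not annihilate it. Any $h\in Z(g)$ preserves the splitting $\mathfrak{h}^g\oplus\mathfrak{h}'$; on the fixed factor the generator is identified, via the Hochschild--Kostant--Rosenberg map, with the canonical element of $\wedge^{\dim\mathfrak{h}^g}\mathfrak{h}^g\otimes\wedge^{\dim\mathfrak{h}^g}(\mathfrak{h}^g)^*$, on which $\GL(\mathfrak{h}^g)$ acts by $\det^{-1}\cdot\det=1$, and on the moving factor the corresponding determinant weights likewise cancel, so the action is trivial and the invariants equal $\mathbb{C}$. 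Thus every conjugacy class with eigenvalue-$1$ multiplicity $j$ contributes a single copy of $\mathbb{C}$ in homological degree $j$ and in cohomological degree $2n-j$, yielding $HH_j\cong HH^{2n-j}\cong\mathbb{C}^{a_j}$. I expect the main obstacle to be precisely this last determinant bookkeeping together with the exact identification of the $Z(g)$-action on the twisted top class: it is the vanishing of these weights that guarantees the invariants are one-dimensional rather than zero, so that the count is genuinely $a_j$. By contrast, the crossed-product decomposition and the K\"unneth reduction are formal once $|G|$ is inverted.
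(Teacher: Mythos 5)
The paper gives no proof of this statement---it is quoted verbatim as a special case of \cite[Theorem 6.1]{AFLS00}---and your argument is a correct reconstruction of essentially the strategy used in that reference: the crossed-product decomposition $HH_{\bullet}(A\rtimes G)\cong\bigoplus_{[g]}HH_{\bullet}(A,A_g)^{Z(g)}$ valid since $|G|$ is invertible, the Koszul/K\"unneth computation showing that $HH_{\bullet}(A,A_g)$ is one-dimensional and concentrated in degree $\dim(\mathfrak{h}\oplus\mathfrak{h}^*)^g=2\dim\mathfrak{h}^g$ (with the dual computation giving cohomological degree $2n-j$, i.e.\ Van den Bergh duality), and the triviality of the residual centralizer action. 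Two minor remarks: the multiplicity $j$ in the statement must be read on the symplectic space $\mathfrak{h}\oplus\mathfrak{h}^*$ rather than on $\mathfrak{h}$ (which is consistent with the paper's later use of $a_j=0$ for odd $j$, and which you handle correctly), and the $Z(g)$-invariance of the twisted class can be seen even more directly than by determinant bookkeeping, since the generator of $HH_0(\mathcal{D}(\mathfrak{h}'),\mathcal{D}(\mathfrak{h}')_g)$ is the class of $1$, which is manifestly fixed under conjugation by any $h\in Z(g)$.
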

\subsection{Sheaf of Cherednik algebras}
\label{global}
Assume from now on that $X$ is a $n$-dimensional connected complex manifold equipped with a faithful action of a finite group $G$ of holomorphic automorphisms of $X$ and denote as before $Y:=X/G$. Let $p:X\to Y$ denote the canonical projection map. We define the sheaf of Cherednik algebras, introduced originally by Etingof in \cite{Eti04}, following \cite{FT17}.
  
Let $X^g\subset X$ denote the fixed point set of $g\in G$. A \emph{nonlinear complex reflection} of $X$ is an element $g$ in $G$ such that $X^g$ has a connected component $X_i^g$ of complex codimension $1$ in $X$. In accord with the terminology in \cite{Eti04}, a codimension $1$ connected component $X_i^g\subset X^g$ is called a \emph{reflection hypersurface}. 
We denote by $\mathrm{S}$ the set of pairs $(g, X_i^g)$ of  complex reflections $g$ and codimension $1$ connected components $X_i^g$ of $X^g$ in $X$. Let $c: \mathrm{S}\longrightarrow\bb{C}$ be a $G$-invariant function. Let $D:=\bigcup_{\codim X_i^g=1}X_i^g$ and let $j: X\backslash D\longrightarrow X$ be the open inclusion map. For each $(g, X_i^g)\in\mathrm{S}$, let $\mathcal{O}_X(X_i^g)$ designate the sheaf of holomorphic functions on $X\backslash X_i^g$ taking poles of at most first order only along $X_i^g$ and let $\xi_{X_i^g}: \mc{T}_X\rightarrow\OO_X(X_i^g)/\OO_X$ be the natural surjective map of $\OO_X$-modules. A \emph{Dunkl operator associated to a holomorphic vector field $\mc{Z}$ on $X$ } is a section $\mathbb{D}_{\mc{Z}}$ of the sheaf $p_*j_*j^*(\mathcal{D}_X\rtimes G)$ over an open Stein subset $U$ of $Y$, which 
locally can be written in the form
\begin{equation*} 
\mathbb{D}_{\mc{Z}}=\mathcal{L}_{\mc{Z}} +\sum_{g\in\mathrm{S}}\frac{2c(g)}{1-\lambda_{g}}f_{X_i^g}(g-1).    
\end{equation*}
Here, $\mathcal{L}_{\mc{Z}}$ is the Lie derivative with respect to $\mc{Z}$, the complex number $\lambda_{g}$ is the nontrivial eigenvalue of $g$ on the conormal bundle to the codimension $1$ component $X_i^g$ and $f_{X_i^g}\in\Gamma(p^{-1}(U), \mathcal{O}_X(X_i^g))$ is a function whose residue agrees with $\mc{Z}$ once both are restricted to the normal bundle of $X_i^g$ in $X$, that is $f_{X_i^g}\in\xi_{X_i^g}(\mc{Z})$. Following Section $1.1$ in \cite{FT17}, we attach to the data $X$ and $G$ the ensuing sheaf of associative algebras on Y. 
\begin{definition}
The sheaf of Cherednik algebras $\Cher$ on the orbifold $Y$ is a subsheaf of the sheaf $p_*j_*j^*(\mathcal{D}_X\rtimes G)$ generated locally on $G$-invariant open Stein sets $U$ in $Y$ by $p_*\mathcal{O}_X|_U$, $\bb{C}G$ and Dunkl operators $\bb {D}_{\mc{Z}}$ associated to holomorphic vector fields $\mc{Z}$ on $X$.  
\end{definition}
The definition of $\mc{H}_{1,c, X, G}$ is independent on the choice of a function $f_{X_i^g}$ in the Dunkl operators, because by adding a holomorphic function from $\Gamma(p^{-1}(U), \OO_X)$ to $f_{X_i^g}$, the new Dunkl operator differs from the old one by a section of $p_*\mathcal{O}_X\rtimes G$ over $U$. The sheaf of Cherednik algebras $\Cher$ possesses a natural increasing and exhaustive filtration $\mc{F}^{\bullet}$ which is defined on the generators by $\deg(p_*\mathcal{O}_X)=\deg(\bb{C}G)=0$ and $\deg(\bb{D}_{\mc{Z}})=1$. It is the analogue of the geometric filtration of the rational Cherednik algebra, discussed in Section \ref{rational}. 

We need a topology on $X$ which, when dealing with sheaves,  allows us to switch between $X$ and $Y$ easily. To that aim, we equip $X$ with the $G$-equivariant topology $\mf{T}_X^G$  which is comprised of the $G$-invariant open sets in the analytic topology of $X$ (\emph{cf}. Section $2.1$ in \cite{BM14}). These sets are preimages of open subsets in $Y$. As $p: X\to Y$ is surjective, we have $pp^{-1}(U)=U$ for every open set $U$ in $Y$. Hence, the $G$-equivariant topology $\mf{T}_X^G$ of $X$ is a non-hausdorff topology consisting of open subsets of the form $p^{-1}(p(V))=\bigcup_{g\in G}gV$, where $V$ is open in $X$. The sheaf $\mc{H}_{1,c, X, G}$ can equivalently be viewed as a sheaf on $Y$ and as a sheaf on $X$ in the $G$-equivariant topology.

Now, we define a special basis for the $G$-equivariant topology $\mf{T}_X^G$ on $X$. To that aim, we recall that a \emph{slice} at a point $x\in X$ is a $\Stab(x)$-invariant neighborhood $W_x$ such that $W_x\cap gW_x=\varnothing$ for all $g\in G\setminus \Stab(x)$. A slice is called a \emph{linear} if there is a $\Stab(x)$-invariant open set $V$ in $\bb C^n$ such that $W_x$ is $\Stab(x)$-equivariantly biholomorphic to $V$. As the group $G$ is finite, it acts on $X$ properly discontinuously. Hence, each point $x$ in $X$ possesses a slice $W_x$. Furthermore, by Cartan's Lemma one can always shrink the slice $W_x$ until the $\Stab(x)$-action is linearized. Thus, every point in $X$ possesses a fundamental system of linear slices. Moreover, for every member $W_x'$ of a fundamental system of linear slices at $x$ in $X$ with $\Stab(x):=H$, one can always find a smaller $H$-invariant linear slice $W_x\subset W_x'$ which is $H$-equivariantly biholomorphic to a polydisc in $\bb C^{n-l}\times\bb C^l$ where $\bb C^{n-l}$ is the fixed point subspace of $\bb C^n$ with respect to $H$. Such a set $W_x$ is Stein and the disjoint union of translates $\ind_H^G(W_x):=\coprod_{g\in G/H}gW_x$ is a subset of $\bigcup_{g\in G}gW_x'$. Hence, the collection $\mf{B}_X^G$ of sets $\ind_H^G(W_x)$, where $W_x$ is either an $H$-invariant linear slice biholomorphic to a product of $H$-invariant polydiscs in $\bb C^{n-l}\times\bb C^l$ or an $H$-invariant open subset of the principal stratum $\mathring{X}$ of regular points in $X$ with $gW_x\cap W_x=\varnothing$ for all $g\in G\setminus H$, forms a basis of G-invariant Stein open sets for $\mf{T}_X^G$.  

Note that for every $x$ on the connected component $X_i^H$ of the fixed point submanifold $X^H$, every $H$-invariant linear slice $W_x$ constitutes a holomorphic slice chart for $X_i^H$.  That is, if $x^1, x^2, \dots, x^{n-l}$ are the holomorphic coordinates on the complex vector subspace $\big(\bb C^n\big)^H=\bb C^{n-l}$ and $y^1, \dots, y^l$ are the holomorphic coordinates on the $l$-dimensional complement of $\bb C^{n-l}$ in $\bb C^n$, then $(x^1, \dots, x^{n-l}, y^1, \dots, y^l)$ define local holomorphic coordiantes of $X$ on $W_x$ such that $(x^1, \dots, x^{n-l})$ are local holomorphic coordinates of $W_x\cap X_i^H$ and $(y^1, \dots y^l)$ are local holomorphic coordinates on $W_x$ in transversal direction to $X_i^H$.
\section{Hochschild and cyclic homology of some deformation algebras}
\label{sec3}
We calculate the Hochschild and cyclic homology of some important for our applications special algebras. We refrain from repeating the well-known definitions and facts on Hochschild and cyclic homology. Instead, we refer the reader to \cite{weibel94} and \cite{Lod13} for a detailed discussion of their theory.

Throughout this section, let $\mc{D}_{\textrm{alg.}}(\mf{h})$ be the algebra of differential operators on $\mf{h}$ with algebraic coefficients and let $\widehat{\mc{D}}(\mathfrak{h})$ denote the degreewise completion of the algebra $\mc{D}(\mathfrak{h})_{\textrm{alg.}}$ with respect to the $\mf{m}$-adic topology on the ring $\bb{C}[\mathfrak{h}]$ where $\mf{m}$ is the maximal ideal in $\bb{C}[\mathfrak{h}]$. When $\mf{h}=\bb C^n$, we use the shorthand notation $\widehat{\mc{D}}_n:=\widehat{\mc{D}}(\bb C^n)$. We need the following technical result for which we were not able to find a reference. 
\begin{proposition}
\label{prophh}
There is an isomorphism of complex vector spaces
\begin{equation*}
\hh_{\bullet}(\widehat{\mc{D}}(\mathfrak{h})\rtimes G)\cong\hh_{\bullet}(\mc{D}_{\textrm{alg.}}(\mathfrak{h})\rtimes G).
\end{equation*}
\end{proposition}
\begin{proof}
For brevity, denote by $\C_{\bullet}:=(\C_{\bullet}(\mc{D}_{\textrm{alg.}}(\mathfrak{h})\rtimes G), d)$ the Hochschild chain complex of $\mc{D}_{\textrm{alg.}}(\mathfrak{h})\rtimes G$ with Hochschild differential $d$ and by $\widehat{\C}_{\bullet}:=(\widehat{\C}_{\bullet}(\widehat{\mc{D}}(\mathfrak{h})\rtimes G), d)$ the corresponding completed chain complex. The Lie group $\Un(1)$-action on $\mathfrak{h}$ naturally extends to an action $\rho$ on the differential-graded algebra $\widehat{\C}_{\bullet}$ by algebra automorphisms which commute with the Hochschild differential. Define a mapping $\hat{P}_n: \widehat{\C}_n\longrightarrow\widehat{\C}_n$ by
\begin{equation*}
a_0\otimes\cdots\otimes a_n\mapsto\int_{\Un(1)}\rho(\lambda)(a_0\otimes\cdots\otimes a_n)d\mu(\lambda),
\end{equation*}
where $d\mu(\lambda)$ is the Haar measure on $\Un(1)$. Since the group is compact, the integral is convergent and consequently $\hat{P}_n$ is a well-defined $\bb{C}$-linear map. Its image $\Ima(\hat{P}_n)$ is the fixed point subspace $\widehat{\C}_{n}^{\Un(1)}$ and the kernel $\ker(\hat{P}_n)$ is the algebraic complement of $\widehat{\C}_{n}^{\Un(1)}$ in $\widehat{\C}_n$. By the $\Un(1)$-equivariance of $d$, we have $\hat{P}_{n-1}\circ d_{n}=d_{n-1}\circ\hat{P}_n$. Hence, the mapping $\hat{P}: \widehat{\C}_{\bullet}\longrightarrow\widehat{\C}_{\bullet}$ is a chain complex endomorphism with $d(\widehat{\C}_{n}^{\Un(1)})\subseteq\widehat{\C}_{n-1}^{\Un(1)}$ and $d(\ker(\hat{P}_n))\subseteq\ker(\hat{P}_{n-1})$. This means that $(\widehat{\C}_{\bullet}^{\Un(1)}, d)$ and $(\ker(\hat{P}), d)$ are subcomplexes of $(\widehat{\C}_{\bullet}, d)$ with $(\ker(\hat{P}_{\bullet}), d)$ having a free $\Un(1)$-action. Moreover, as a chain complex $\widehat{\C}_{\bullet}$ is decomposable in a direct sum of the subcomplexes 
\begin{equation}\label{decomp}\widehat{\C}_{\bullet}=\Ima(\hat{P})\oplus\ker(\hat{P})=\widehat{\C}_{\bullet}^{\Un(1)}\oplus\ker(\hat{P}).\end{equation} 

We proceed by showing that the inclusion morphism $i:\widehat{\C}_{\bullet}^{\Un(1)}\hookrightarrow \widehat{\C}_{\bullet}$ is in fact a quasi-isomorphism. We prove that by showing that the corresponding mapping cone chain complex $\cone(i)$ 
\begin{equation*}
\label{chaincone}
\begin{tikzcd}[row sep=tiny]
\cdots\cdots\arrow[rdd, hook, "i_{n+1}"] \arrow[r, "d_{n+1}"]&\widehat{\C}_n^{\Un(1)}\arrow[rdd, hook, "i_{n}"] \arrow[r, "d_{n}"]&\widehat{\C}_{n-1}^{\Un(1)}\arrow[rdd, hook, "i_{n-1}"]\arrow[r, "d_{n-1}"]&\widehat{\C}_{n-2}^{\Un(1)}\arrow[rdd, hook, "i_{n-2}"]\arrow[r, "d_{n-2}"]&\cdots\cdots\\
&\mathlarger{\bigoplus}&\mathlarger{\bigoplus}&\mathlarger{\bigoplus} \\
\cdots\cdots\arrow[r, "d_{n+1}"]&\widehat{\C}_{n+1}\arrow[r, "d_{n+1}", "\mathlarger{D_{n+1}}"']&\widehat{\C}_{n}\arrow[r, "d_n", "\mathlarger{D_n}"']&\widehat{\C}_{n-1}\arrow[r, "d_{n-1}"]&\cdots\cdots
\end{tikzcd}
\end{equation*}
with $\cone(i)_n=\widehat{\C}_{n-1}^{\Un(1)}\oplus\widehat{\C}_n$ and a differential $D_n=\begin{pmatrix}-d_{n-1}&0\\-i_{n-1}&d_n\end{pmatrix}$ is acyclic. To demonstrate this, it suffices to check that $\ker(D_{n})\subseteq\Ima(D_{n+1})$ for every $n\in\bb{Z}_{\geq{0}}$. It is obvious that an arbitrary element $(v, w)\in\ker(D_n)$ satisfies $d_{n-1}(v)=0$ and $d_n(w)=i_{n-1}(v)\in\widehat{\C}_{n-1}^{\Un(1)}$. Writing $w$ as a sum of elements $v_0\in\widehat{\C}_n^{\Un(1)}$ and $w_0\in\ker(\hat{P}_n)\subset\widehat{\C}_n$ in line with  \eqref{decomp} implies consequently $\rho(\lambda)d_{n}(w)=d_n(v_0)+\rho(\lambda)(d_n(w_0))=d_n(v_0)+d_n(w_0)=d_n(w)$ for every element $\lambda\in\Un(1)$. Consequently, $d_n(w_0)\in\widehat{\C}_{\bullet}^{\Un(1)}$. Hence, $d_n(w_0)=0$. We can write $w$ as a sum 
\begin{equation}\label{decompofw}w=v_0+w_0,\end{equation}  
where $v_0\in\widehat{\C}_n^{\Un(1)}$ and $z\in\widehat{Z}_n\cap\ker(\hat{P}_n)$ where $\widehat{Z}_n$ is the space of Hochschild $n$-cycles in $\widehat{\C}_n$.

Let $E=\epsilon\sum_{k=1}^{\dim\mathfrak{h}}y_k\frac{\partial}{\partial y_k}$ be the Euler field on $\mathfrak{h}$ corresponding to an element $\epsilon$ from the Lie algebra $\mathfrak{u}(1)$. Then, the induced action of $\mf{u}(1)$ on the differential-graded algebra $\widehat{\C}_{\bullet}$ is given by   
\begin{equation}
\label{la-action}
 \rho_*(\epsilon)(a_0\otimes\dots\otimes a_n):=\frac{d}{dt}\big|_{t=0}\rho(e^{\epsilon t})(a_0\otimes\dots\otimes a_n)=\sum_{j=0}^na_0\otimes\dots\otimes[E, a_j]\otimes\dots\otimes a_n,
\end{equation}
for every $\epsilon\in\mathfrak{u}(1)\cong i\bb{R}$ and for every $a_0\otimes\dots\otimes a_n\in\widehat{\C}_n$. We notice that the free $\Un(1)$-action $\rho$ on $\ker(\hat{P}_{\bullet})$ induces a free $\mathfrak{u}(1)$-action $\rho_*$ on $\ker(\hat{P}_{\bullet})$. In what follows, we show that the image $\rho_*(i)$ of the sole generator of $\mathfrak{u}(1)$ is a null-homotopic endomorphism of the chain complex $\widehat{\C}_{\bullet}$. 
For that purpose we define a $\bb{C}$-linear map $h:\widehat{\C}_{n}\longrightarrow\widehat{\C}_{n+1}$ by
\begin{equation*}
a_0\otimes\dots\otimes a_n\mapsto \sum_{j=0}^n(-1)^{j+1}a_0\otimes\dots\otimes a_j\otimes E\otimes a_{j+1}\otimes\dots\otimes a_n,
\end{equation*}
where here $E$ stands for the Euler vector field on $\mathfrak{h}$ associated to the generator $i$ of $\mf{u}(1)$. Let $a_0\otimes\dots\otimes a_n\in\widehat{\C}_n$. Then we compute
\begin{align}
\label{dh}
&\mathsmaller{dh(a_0\otimes\dots\otimes a_n)}=\mathsmaller{d\big( \sum_{j=0}^n(-1)^{j+1}a_0\otimes\dots\otimes a_j\otimes E\otimes a_{j+1}\otimes\dots\otimes a_n\big)}~~~~~~~~~~~~~~~~~~~~~~~~\nonumber\\
&\mathsmaller{=\sum_{j=0}^{n}\big( a_0\otimes\dots\otimes Ea_{j+1}\otimes\dots a_n -a_0\otimes\dots\otimes a_jE\otimes\dots\otimes a_n\big)}\nonumber\\
&\mathsmaller{+\sum_{j=0}^{n}\sum_{k=0}^{j-1}(-1)^{j+k+1}a_0\otimes\dots\otimes a_ka_{k+1}\otimes\dots\otimes(a_j\otimes E\otimes a_{j+1})\otimes\dots\otimes a_n}\nonumber\\
&\mathsmaller{-\sum_{j=0}^{n}\sum_{k=j+1}^n(-1)^{j+k+1}a_0\otimes\dots\otimes(a_j\otimes E\otimes a_{j+1})\otimes\dots\otimes a_ka_{k+1}\otimes\dots\otimes a_n}
\end{align}
as well as 
\begin{align}
\label{hd}
&\mathsmaller{hd(a_0\otimes\dots\otimes a_n)}=\mathsmaller{h(\sum_{k=0}^n(-1)^{k}a_0\otimes\dots\otimes a_ka_{k+1}\otimes\dots\otimes a_n)}~~~~~~~~~~~~~~~~~~~~~~~~~~~~~~~~~~~~~~~~~\nonumber\\
&\mathsmaller{=-\sum_{j=0}^{n}\sum_{k=0}^{j-1}(-1)^{j+k+1}a_0\otimes\dots\otimes a_ka_{k+1}\otimes\dots\otimes a_j\otimes E\otimes a_{j+1}\otimes\dots\otimes a_n}\nonumber\\
&\mathsmaller{+\sum_{j=0}^n\sum_{k=j+1}^n(-1)^{j+k+1}a_0\otimes\dots\otimes a_j\otimes E\otimes a_{j+1}\otimes\dots\otimes a_ka_{k+1}\otimes\dots\otimes a_n}.
\end{align}
The sum of \eqref{dh} and \eqref{hd} yields
\begin{align}
\label{nullhom}
\mathsmaller{(dh+hd)(a_0\otimes\dots\otimes a_n)}&\mathsmaller{=\sum_{j=0}^{n}(a_0\otimes\dots\otimes Ea_{j+1}\otimes\dots a_n-a_0\otimes\dots\otimes a_jE\otimes\dots\otimes a_n)}\nonumber\\
&\mathsmaller{=\sum_{j=1}^{n}a_0\otimes\dots\otimes Ea_{j}\otimes\dots\otimes a_n-\sum_{j=1}^na_0\otimes\dots\otimes a_jE\otimes\dots\otimes a_n }\nonumber\\
&\mathsmaller{+[E, a_0]\otimes\dots\otimes a_n}\nonumber\\
&\mathsmaller{=\sum_{j=0}^na_0\otimes\dots\otimes[E, a_j]\otimes\dots\otimes a_n},
\end{align}
which by Definition \eqref{la-action} is equal to $\rho_*(i)(a_0\otimes\dots\otimes a_n)$. Hence, $h$ is a contraction  and $\rho_{*}(i)$ is null-homotopic. Since $\rho_*$ is a free $\mathfrak{u}(1)$-action on $\ker(\hat{P}_{\bullet})$, the endomorphism $\rho_*(i)$ is an injective chain complex map of $\widehat{\C}_{\bullet}$ which is invertible on $\Ima\rho_*(i)$. Take an $n$-cycle $z\in\ker(\hat{P}_n)\cap\widehat{Z}_n$. Then, by computation \eqref{nullhom}, we have $\rho_{*}(i)(z)= dh(z)$. As $\rho_{*}(i)$ is invertible   on $\Ima\rho_*(i)$, we have $z=\rho_*(i)^{-1}dh(z)=d\rho_*(i)^{-1}(hz)\in\ker(\hat{P}_n)\cap\widehat{B}_n$. Consequently, if $(v, w)\in\ker(D_n)$, then decomposition \eqref{decompofw} can be refined as 
\begin{equation*}
w= v_0+w_0
\end{equation*}
where $v_0\in\widehat{\C}_n^{\Un(1)}$ and $w_0\in\widehat{B}_n\cap\ker(\hat{P}_n)$. Ergo, there are always $-x\in\widehat{\C}_n^{\Un(1)}$ and $y\in\widehat{\C}_{n+1}$ such that $w=d_{n+1}(y)-i_{n}(x)$. Since $d_n(w)=-d_n(i_n(x))=-i_{n-1}(d_n(x))=-i_{n-1}(v)$, then the injectivity of $i_{n-1}$ stipulates that $v=-d_n(x)$.
Consequently, $(v, w)$ can be written as $(v, w)=(-d_n(x), d_{n+1}(y)-i_n(x))$ for an appropriate $x\in\widehat{\C}_{n}^{\Un(1)}$ and $y\in\widehat{\C}_n$. This implies $\ker(D_n)\subseteq\Ima(D_{n+1})$. Hence, $i$ is a quasi-isomorphism. In an analogous manner we demonstrate that the inclusion morphism $i: \C_{\bullet}^{\Un(1)}\hookrightarrow \C_{\bullet}$ is a quasi-isomorphism.

Now, we show that $\widehat{\C}_{\bullet}^{\Un(1)}=\C_{\bullet}^{\Un(1)}$. Indeed, as the $\Un(1)$-fixed point subspaces of $\widehat{\mc{D}}(\mathfrak{h})\rtimes G$ and $\mc{D}_{\textrm{alg.}}(\mf{h})\rtimes G$ are generated by $\bb{C}G$ and the subspace spanned by $\{y_i\pd{y_j}~|~ i, j=1, \dots, \dim\mathfrak{h}\}$, they are identical. Both quasi-isomorphisms $i: \widehat{\C}_{\bullet}^{\Un(1)}\to\widehat{\C}_{\bullet}$ and $i: \C_{\bullet}^{\Un(1)}\to\C_{\bullet}$ are related by the zig-zag diagram
\begin{equation*}
\widehat{\C}_{\bullet}\xhookleftarrow{~~~~i~~~~}\widehat{\C}_{\bullet}^{\Un(1)}=\C_{\bullet}^{\Un(1)}\xhookrightarrow{~~~~i~~~~} \C_{\bullet}
\end{equation*}
which at the level of homologies provides the statement of the proposition.
\end{proof}
The first direct consequence of Proposition \ref{prophh} is the following result.  
\begin{corollary}
Let $U$ be a simply connected $G$-invariant affine or Stein open subset of a complex vector space $\mf{h}$ and let $\mc{D}(U)$ be the algebra of holomorphic differential operators on $U$.  Then, \[\hh_j(\mc{D}(U)\rtimes G)\cong\hh_j(\mc{D}_{\textrm{alg.}}(\mf{h})\rtimes G)\]
as complex vector spaces.
\end{corollary}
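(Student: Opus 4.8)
The plan is to run the proof of Proposition \ref{prophh} again, now with the formal completion $\widehat{\mc{D}}(\mf{h})$ replaced by the algebra of holomorphic differential operators $\mc{D}(U)$, reducing everything to a common $\Un(1)$-invariant subcomplex. Write $\mc{C}_\bullet:=(\mc{C}_\bullet(\mc{D}(U)\rtimes G),d)$ for the topological Hochschild chain complex, whose tensor powers are completed tensor products of the Fréchet algebra $\mc{D}(U)\rtimes G$. Since $U\subset\mf{h}$ is $G$-invariant and, in the situation of interest where $U$ is a linear slice (a $G$-invariant polydisc in the sense of Subsection \ref{global}), stable under the scaling circle $\Un(1)$, the scaling action of $\Un(1)$ on $\mf{h}$ restricts to holomorphic automorphisms of $U$ and hence induces an action $\rho$ on $\mc{C}_\bullet$ by algebra automorphisms commuting with $d$. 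As $\Un(1)$ is compact and $\rho$ is continuous, the averaging operator $\hat{P}_n(a_0\otimes\cdots\otimes a_n)=\int_{\Un(1)}\rho(\lambda)(a_0\otimes\cdots\otimes a_n)\,d\mu(\lambda)$ is a well-defined chain endomorphism, giving a direct-sum decomposition $\mc{C}_\bullet=\mc{C}_\bullet^{\Un(1)}\oplus\ker\hat{P}$ exactly as in \eqref{decomp}.

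First I would show that the inclusion $i:\mc{C}_\bullet^{\Un(1)}\hookrightarrow\mc{C}_\bullet$ is a quasi-isomorphism. The Euler field $E=\sum_k y_k\pd{y_k}$ is a global holomorphic differential operator on $U$, so the homotopy $h$ of \eqref{retraction} is defined on $\mc{C}_\bullet$ and the computation \eqref{nullhom} giving $dh+hd=\rho_*(i)$ carries over unchanged. On $\ker\hat{P}$ the infinitesimal generator $\rho_*(i)$ is invertible because $\rho$ acts freely there, so every cycle $z\in\ker\hat{P}\cap\widehat{Z}_n$ is a boundary, and the mapping cone of $i$ is acyclic by the identical argument. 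Thus $i$ is a quasi-isomorphism, and likewise $i:\mc{C}_\bullet(\mc{D}_{\textrm{alg.}}(\mf{h})\rtimes G)^{\Un(1)}\hookrightarrow\mc{C}_\bullet(\mc{D}_{\textrm{alg.}}(\mf{h})\rtimes G)$.

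The crux is then the identification $\mc{C}_\bullet^{\Un(1)}=\mc{C}_\bullet(\mc{D}_{\textrm{alg.}}(\mf{h})\rtimes G)^{\Un(1)}$. A holomorphic operator $\sum_\beta a_\beta(y)\partial^\beta\in\mc{D}(U)$ is fixed by $\rho$ if and only if each coefficient satisfies $a_\beta(\lambda^{-1}y)=\lambda^{|\beta|}a_\beta(y)$ for all $\lambda\in\Un(1)$, which forces $a_\beta$ to be a homogeneous polynomial of degree $|\beta|$; hence $\mc{D}(U)^{\Un(1)}$ is precisely the subalgebra generated by $\{y_i\pd{y_j}\}$, independently of $U$. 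Together with $\bb{C}G$ this is exactly $(\mc{D}_{\textrm{alg.}}(\mf{h})\rtimes G)^{\Un(1)}$, so the two invariant complexes coincide. Splicing the two quasi-isomorphisms through this equality gives the zig-zag
\begin{equation*}
\mc{C}_\bullet(\mc{D}(U)\rtimes G)\xhookleftarrow{~i~}\mc{C}_\bullet(\mc{D}(U)\rtimes G)^{\Un(1)}=\mc{C}_\bullet(\mc{D}_{\textrm{alg.}}(\mf{h})\rtimes G)^{\Un(1)}\xhookrightarrow{~i~}\mc{C}_\bullet(\mc{D}_{\textrm{alg.}}(\mf{h})\rtimes G),
\end{equation*}
which on homology yields the claimed isomorphism.

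The step I expect to be the main obstacle is making the averaging and the invariant-subcomplex identification rigorous in the holomorphic/topological setting: one must verify that $\rho$ is continuous on the completed tensor powers so that $\hat{P}$ converges and genuinely splits the complex, and that the weight-homogeneity argument really collapses the a priori much larger space of holomorphic coefficients down to polynomials of matching degree. This last point is exactly where the scaling-stability of $U$ is essential --- it is what makes a weight-$w$ holomorphic function honestly homogeneous of degree $w$ --- and it is the reason the result is genuinely about the slices of Subsection \ref{global} rather than an arbitrary Stein open; one should therefore record this hypothesis explicitly, since without $\Un(1)$-stability the decomposition \eqref{decomp} is unavailable.
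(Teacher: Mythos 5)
Your route is genuinely different from the paper's, and as written it proves a different (more restrictive) statement. The paper does not rerun the averaging argument on $\mc{D}(U)$ at all: it sandwiches the holomorphic complex between the two complexes already handled in Proposition \ref{prophh}, via $i_1: \C_{\bullet}(\mc{D}_{\textrm{alg.}}(\mf{h})\rtimes G)\hookrightarrow \C_{\bullet}(\mc{D}(U)\rtimes G)$ (restriction of polynomial coefficients to $U$) and $i_2: \C_{\bullet}(\mc{D}(U)\rtimes G)\hookrightarrow\widehat{\C}_{\bullet}(\widehat{\mc{D}}(\mf{h})\rtimes G)$ (Taylor expansion, whose existence is where simple connectedness enters; cf.\ Remark \ref{hhofholdiffops}). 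Since $I=i_2\circ i_1$ induces on homology the isomorphism of Proposition \ref{prophh}, $i_{2*}$ is surjective, and combined with its injectivity both $i_{1*}$ and $i_{2*}$ are isomorphisms. This argument never needs any symmetry of $U$ beyond $G$-invariance.

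The concrete gap in your version is the one you yourself flag at the end, and it is not a removable technicality: the averaging operator $\hat{P}$, the splitting \eqref{decomp}, and above all the identification of $\mc{C}_{\bullet}(\mc{D}(U)\rtimes G)^{\Un(1)}$ with the polynomial invariant subcomplex all require $U$ to be stable under the scaling circle. The corollary as stated allows any simply connected $G$-invariant affine or Stein open $U\subset\mf{h}$; since $G\subset\GL(\mf{h})$ fixes only the origin a priori, such a $U$ need not contain $0$ and, even when it does, need not be circled (a generic $G$-invariant Stein neighborhood of $0$ is not $\Un(1)$-stable). For those $U$ your decomposition is simply unavailable, so the proposal does not establish the statement in its stated generality; adding the hypothesis of $\Un(1)$-stability changes the corollary rather than proving it. Note also that even in the circled case your homogeneity argument only treats the weight-zero part of a single tensor factor, whereas the invariants of the (completed) tensor powers involve all weight spaces $\mc{D}(U)_w$ with weights summing to zero; one must check each such weight space consists of polynomial-coefficient operators (true for circled neighborhoods of $0$, by homogeneity plus boundedness at the origin, but it needs to be said). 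The paper's sandwich argument sidesteps all of this by confining the $\Un(1)$-analysis to the two scaling-invariant endpoints $\mc{D}_{\textrm{alg.}}(\mf{h})$ and $\widehat{\mc{D}}(\mf{h})$.
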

\begin{proof}
Let us consider the injective maps of chain complexes $i_1: \C_{\bullet}(\mc{D}_{\textrm{alg.}}(\mf{h})\rtimes G)\hookrightarrow \C_{\bullet}(\mc{D}(U)\rtimes G)$ and $i_2:\C_{\bullet}(\mc{D}(U)\rtimes G)\hookrightarrow\widehat{\C}_{\bullet}(\widehat{\mc{D}}(\mf{h})\rtimes G)$. The composition $I:=i_2\circ i_1$ fits in the following commutative diagram 
\begin{equation*}
\begin{tikzcd}
\C_{\bullet}(\mc{D}_{\textrm{alg.}}(\mf{h})\rtimes G)^{\Un(1)}\arrow[r, "="]\arrow[d, hook, left, "\textrm{quasi-iso.}"]& \widehat{\C}_{\bullet}(\widehat{\mc{D}}(\mf{h})\rtimes G)^{\Un(1)}\arrow[d, hook, "\textrm{quasi-iso.}"] \\
\C_{\bullet}(\mc{D}_{\textrm{alg.}}(\mf{h})\rtimes G) \arrow[r, hook, "I"] & \widehat{\C}_{\bullet}(\widehat{\mc{D}}(\mf{h})\rtimes G)
\end{tikzcd}
\end{equation*}
where the upper horizontal map and the vertical maps are  the identity, respectively the injective quasi-isomorphisms from the proof of Proposition \ref{prophh}. At the level of Hochschild homology, $I_{*}$ is an isomorphism whence $i_{2*}$ is surjective. Since $i_{2*}$ is clearly injective, too, the claim follows.    
\end{proof}
\begin{remark}
\label{hhofholdiffops}
$i)$ If $U$ is not simply connected, morphism $i_2$ might not exist. Consider, for example, the fixed-point-free subset $U=\mf{h}_{\textrm{reg}}$. 

$ii)$ The above result remains valid for disjoint unions of connected $G$-invariant open sets $U_i$ in $G$-spaces $\mf{h}_i$. Namely, it can be shown that $\hh_j(\mc{D}(\coprod_iU_i)\rtimes G)\cong\hh_j(\mc{D}_{\textrm{alg.}}(\coprod_i\mf{h}_i)\rtimes G)$.  
\end{remark}
From now on, unless stated otherwise, we denote by $\hbar$ multiple formal indeterminates $\hbar_1, \dots, \hbar_k$ with $k$ equal to the number of conjugacy classes of complex reflections in $G$ and we set $\bb C[\hbar]:=\bb C[\hbar_1, \dots, \hbar_k]$ and $\bb C\llbracket\hbar\rrbracket:=\bb C\llbracket\hbar_1, \dots, \hbar_k\rrbracket$, respectively. We only make an exception of this nomenclature for the field of Laurent series $\bb C\lau{\hbar}$ in a single indeterminate $\hbar$ which for brevity is denoted by $\bb K$ throughout the text. Accordingly, we denote by $\widehat{H}_{1, \hbar}(\mathfrak{h}, G)$ the $k$-parameter formal deformation of $\widehat{\mc{D}}(\mf{h})\rtimes G$. 
We use the notation $\Chered{G}$ for the localization of the $1$-parameter formal deformation $\widehat{H}_{1, \hbar c}(\mathfrak{h}, G)$ with respect to the multiplicative submonoid $T:=\{\hbar^{n}~|~n\in\bb{Z}_{>0}\}$ where $\hbar c$ stands for  $\hbar c_1, \dots, \hbar c_k$ with $\hbar$ a formal indeterminate and $c_1, \dots, c_k$ fixed complex-valued parameters. In particular, we have $\Chered{G}=\widehat{H}_{1, \hbar c}(\mathfrak{h}, G)[T^{-1}]=\widehat{H}_{1, \hbar c}(\mathfrak{h}, G)\otimes_{\bb{C}\llbracket\hbar\rrbracket}\bb K$. 
Let $\bb C[\varepsilon]$ with $\varepsilon^2=0$ be the ring of dual numbers. The following proposition is a consequence of Theorem \ref{hhcher} and Proposition \ref{prophh} and the proof of its first half mimics and expounds the proof of \cite[Proposition 1]{RT12}. 
\begin{proposition}
\label{hhaj}
$i)$ There is an isomorphism of $\bb C\llbracket\hbar\rrbracket$-modules 
\[\hh_{j}\big(\widehat{H}_{1, \hbar}(\mathfrak{h}, G)\big)\cong\hh_j\big(\mc{D}_{\textrm{alg.}}(\mf{h})\rtimes G\big)\llbracket\hbar\rrbracket\cong\bb C\llbracket\hbar\rrbracket^{a_j}\] 
where $a_j$ denotes the number of conjugacy classes of elements of $G$ having eigenvalue $1$ with multiplicity $j$.

$ii)$ There is an isomorphism of $\bb K$-vector spaces 
\[\hc_{\bullet}\big(\widehat{H}_{1, \lau{\hbar}}(\mf{h}, G)\otimes_{\bb C}\bb C[\varepsilon]\big)\cong\hc_{\bullet}\big(\mc{D}_{\textrm{alg.}}(\mf{h})\rtimes G\lau{\hbar}\otimes_{\bb C}\bb C[\varepsilon]\big).\]
\end{proposition}
\begin{proof}
$i)$ To simplify the notation, let us denote by $(\hbar)$ the maximal ideal $(\hbar_1, \dots, \hbar_k)$ in $\bb C\llbracket\hbar\rrbracket$. To compute $\hh_{\bullet}(\widehat{H}_{1, \hbar}(\mathfrak{h}, G))$, we define an increasing, exhaustive and bounded above $\bb{C}\llbracket\hbar\rrbracket$-module filtration on $\widehat{K}_{\bullet}:=\widehat{K}_{\bullet}(\widehat{H}_{1, \hbar}(\mathfrak{h}, G))$ by $F_{p}\widehat{K}_{\bullet}=\widehat{K}_{\bullet}(\hbar)^{-p}$ for $p\in\bb{Z}_{\leq0}$. Then the fact that  
 \begin{align*}
  \widehat{K}_{n}&=\Bigg(\frac{\bb{C}\llbracket\mathfrak{h}\rrbracket\otimes_{\bb{C}[\mathfrak{h}]}T^{\bullet}(\mathfrak{h}\oplus\mathfrak{h}^{\ast})\otimes_{\bb{C}}\bb{C}G[\hbar]}{J}\Bigg)^{\hat\otimes_{\bb{C}}\mathlarger{n+1}}\otimes_{\bb{C}[\hbar]}\bb{C}\llbracket\hbar\rrbracket
 \end{align*}
combined with $\bb{C}\llbracket\hbar\rrbracket/\hbar^p\bb{C}\llbracket\hbar\rrbracket\cong\bb{C}[\hbar]/(\hbar)^p\bb{C}[\hbar]$ implies that the chain complex is complete in the $(\hbar)$-adic topology. Hence, the filtration on $\widehat{K}_{\bullet}$ is complete. The filtration of the chain complex $\widehat{K}_{\bullet}$ determines by the \emph{construction theorem} a spectral sequence $\{E_{pq}^r\}$ starting with 
\begin{align}
\label{Enull}
E_{pq}^0&=F_{p}\widehat{K}_{p+q}/F_{p-1}\widehat{K}_{p+q}\nonumber\\
&\cong(\hbar)^{-p}\widehat{\mc{D}}(\mathfrak{h})\rtimes G\hat\otimes_{\bb{C}}\dots\hat\otimes_{\bb{C}}\widehat{\mc{D}}(\mathfrak{h})\rtimes G\nonumber\\
&=\begin{cases}(\hbar)^{-p}\widehat{\C}_{p+q}(\widehat{\mc{D}}(\mathfrak{h})\rtimes G)~~~~~\textrm{for}~~p\in\bb{Z}_{\leq0}\\ 0~~~~~~~~~~~~~~~~~~\textrm{for}~~p\in\bb{Z}_{>0},\end{cases}
\end{align}
and consequently
\begin{align}
\label{Eone}
E_{pq}^1&=\h_{p+q}(E_{p\ast}^0)\nonumber\\
&=\begin{cases}(\hbar)^{-p}\bb{C}^{a_{p+q}}~~~~\textrm{for}~~p\in\bb{Z}_{\leq0} \\0~~~~~~~~~~~~~~~~~~\textrm{for}~~p\in\bb{Z}_{>0},\end{cases}
\end{align}
where the last line follows from Proposition \ref{prophh}. By  \eqref{Enull}, the entries $E_{pq}^0=0$ for every $p>0$. Hence, the spectral sequence $\{E_{pq}^r\}$ is bounded from above. 

Generically, the space $\mathfrak{h}$ is a semisimple left $\bb CG$-module which by Maschke's theorem can be decomposed in a direct sum $\mathfrak{h}=\mathfrak{h}_1^{\oplus m_1}\oplus\dots\mathfrak{h}_r^{\oplus m_r}$ of isotypic components of simple left $\bb CG$-submodules $\mathfrak{h}_{j}$ of $\mathfrak{h}$ with multiplicity  $m_j$, $j=1,\dots, r$. One can correspondingly express $\mc{D}_{\textrm{alg.}}(\mathfrak{h})\rtimes G$ as
\begin{equation*}
\mc{D}_{\textrm{alg.}}(\mf{h})\rtimes G\cong \big(\mc{D}_{\textrm{alg.}}(\mathfrak{h}_0)\rtimes G\big)^{\otimes_{\bb{C}}\mathlarger{m_0}}\otimes_{\bb C}\dots\otimes_{\bb C}\big(\mc{D}_{\textrm{alg.}}(\mathfrak{h}_r)\rtimes G\big)^{\otimes_{\bb{C}}\mathlarger{m_r}}.
\end{equation*}
Finally, K\"unneth's formula for chain complexes yields the isomorphism
\begin{align}
\label{kuenneth}
&\mathsmaller{\hh_{p+q}(\mc{D}_{\textrm{alg.}}(\mathfrak{h})\rtimes G)}
\cong\underset{ \overset{r}{\underset{j=0}{\sum}}\overset{m_j}{\underset{s_j=1}{\sum}} \alpha_{s_j}^j=p+q}{\bigoplus}\Bigg\{\underset{\substack{j=0,\dots, r\\ s_j=1, \dots, m_j}}{\bigotimes}\hh_{\mathlarger{\alpha}_{s_j}^j}(\mc{D}_{\textrm{alg.}}(\mathfrak{h}_j)\rtimes G)\Bigg\}.
\end{align}
According to \cite[Formula (2.12)]{EG02}, for every simple left $\bb CG$-submodule $\mathfrak{h}_j$, we have $a_{\alpha_{s_j}^j}=0$ if $\alpha_{s_j}^j$ is odd. Consequently, $\hh_{\alpha_{s_j}^j}(\mc{D}_{\textrm{alg.}}(\mathfrak{h}_j)\rtimes G)=0$ when $\alpha_{s_j}^j$ is odd. As whenever $p+q$ is odd, every summand on the right hand side of Isomorphism \eqref{kuenneth} has at least one odd index $\alpha_{s_j}^j$, it follows that $\hh_{p+q}(\mc{D}_{\textrm{alg.}}(\mathfrak{h})\rtimes G)=0$ for $p+q$ odd. Thus, $E_{pq}^1=E_{pq}^{\infty}$, which renders the sequence regular. The complete convergence theorem implies that $E_{pq}^r$ converges to $\hh_{p+q}(\widehat{H}_{1, \hbar}(\mathfrak{h}, G))$, that is,
\begin{align}
\label{conv}
E_{pq}^1&\cong\frac{F_{p}\hh_{p+q}(\widehat{H}_{1, \hbar}(\mathfrak{h}, G))}{F_{p-1}\hh_{p+q}(\widehat{H}_{1, \hbar}(\mathfrak{h}, G))}\nonumber\\
&=(\hbar)^{-p}\hh_{p+q}(\widehat{H}_{1, \hbar}(\mathfrak{h}, G)).
\end{align}
Equating \eqref{Eone} with \eqref{conv} yields $\hh_{p+q}(\widehat{H}_{1, \hbar}(\mathfrak{h}, G))= \bb{C}\llbracket\hbar\rrbracket^{a_{p+q}}$. 

$ii)$ 
As discussed in Section \ref{rational}, the algebra $\widehat{H}_{1, \lau{\hbar}}(\mf{h}, G)$ has an increasing filtration which is inherited by the spherical subalgebra $\widehat{B}_{1, \lau{\hbar}}(\mf{h}, G):={\bf e}\widehat{H}_{1, \lau{\hbar}}(\mf{h}, G){\bf e}$, where ${\bf e}={\bf e}^2$ is the idempotent in $G$. By \cite{EG02}, these algebras are Morita equivalent. Hence, due to the Morita invariance of the cyclic homology we have
\begin{align}
\label{moritaiso}
\hc_{\bullet}(\widehat{H}_{1, \lau{\hbar}}(\mf{h}, G)\otimes\bb C[\varepsilon])\cong\hc_{\bullet}(\widehat{B}_{1, \lau{\hbar}}(\mf{h}, G)\otimes\bb C[\varepsilon]). 
 \end{align}
 On the other hand, the associated graded algebra of $\widehat{B}_{1, \lau{\hbar}}(\mf{h}, G)\otimes\bb C[\varepsilon]$ is isomorphic to the Poisson algebra $\Sym^{\bullet}(\mf{h})^G\otimes\bb C[\varepsilon]$. According to Section $5$ of \cite{Bry88}, there is a mixed bicomplex $(\Omega_{\Sym^{\bullet}(\mf{h})^G\otimes\bb C[\varepsilon]}^{\bullet}, \delta, d)$, where $\delta$ is the Brylinski differential of degree $-1$ and $d$ is the de Rham differential. It follows by \cite[Theorem 2]{Kas88} that 
 \begin{align*}
 \hc_{\bullet}(\widehat{B}_{1, \lau{\hbar}}(\mf{h}, G)\otimes\bb C[\varepsilon])\cong\hc_{\bullet}((\Omega_{\Sym^{\bullet}(\mf{h})^G\lau{\hbar}\otimes\bb C[\varepsilon]}^{\bullet}, \delta, d)).
 \end{align*} 
 This isomorphism coupled with Isomorphism \eqref{moritaiso} yields
 \begin{align*}
\hc_{\bullet}(\widehat{H}_{1, \lau{\hbar}}(\mf{h}, G)\otimes\bb C[\varepsilon])\cong\hc_{\bullet}((\Omega_{\Sym^{\bullet}(\mf{h})^G\lau{\hbar}\otimes\bb C[\varepsilon]}^{\bullet}, \delta, d)).  
  \end{align*}
After we repeat the above exactly the same way for $\widehat{\mc{D}}(\mf{h})\rtimes G\lau{\hbar}$, we obtain
\begin{align*}
\hc_{\bullet}(\widehat{\mc{D}}(\mf{h})\rtimes G\lau{\hbar}\otimes\bb C[\varepsilon])\cong\hc_{\bullet}((\Omega_{\Sym^{\bullet}(\mf{h})^G\lau{\hbar}\otimes\bb C[\varepsilon]}^{\bullet}, \delta, d)).  
 \end{align*}
 The statement follows immediately.    
\end{proof}
The space of traces on $\widehat{H}_{1, \hbar}(\mathfrak{h}, G)$ is  isomorphic to the zeroth cyclic cohomology group of $\widehat{H}_{1, \hbar}(\mathfrak{h}, G)$ which in turn is isomorphic to the dual of the zeroth Hochschild homology $\bb C\llbracket\hbar\rrbracket$-module $\hh_0(\widehat{H}_{1, \hbar}(\mathfrak{h}, G))$. As by Proposition \ref{hhaj}, $i)$, the zeroth Hochschild homology of $\widehat{H}_{1, \hbar}(\mathfrak{h}, G)$ is a free $\bb C\llbracket\hbar\rrbracket$-module of finite rank, the space of $\bb C\llbracket\hbar\rrbracket$-linear traces on $\widehat{H}_{1, \hbar}(\mathfrak{h}, G)$ is isomorphic to the zeroth Hochschild homology $\bb C\llbracket\hbar\rrbracket$-module $\hh_0(\widehat{H}_{1, \hbar}(\mathfrak{h}, G))$. The ensuing corollary ensures that there are circumstances in which $\widehat{H}_{1, \hbar}(\mathfrak{h}, G)$ possesses nontrivial traces. 
 \begin{corollary}
 \label{dimoftracegroupofformalcher}
Suppose $G\subset\GL(\mathfrak{h})$ is a well-generated complex reflection group with $\mathfrak{h}^G=\{0\}$. Then, the rank of~~$\hh_0(\widehat{H}_{1, \hbar}(\mathfrak{h}, G))$ over $\bb C\llbracket\hbar\rrbracket$ is at least $1$.
 \end{corollary}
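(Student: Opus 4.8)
The plan is to unwind the statement through Proposition \ref{hhaj} and then exhibit a single group element with trivial fixed space. By Proposition \ref{hhaj}(i) we have $\dim_{\bb K}\hh_0(\Chered{G}) = a(G)_0$, where $a(G)_0$ counts the conjugacy classes of those $g\in G$ whose eigenvalue $1$ occurs with multiplicity $0$ on $\mathfrak{h}$, i.e.\ those $g$ with $\mathfrak{h}^g=\{0\}$. Thus it suffices to produce a \emph{single} element $g\in G$ acting on $\mathfrak{h}$ without the eigenvalue $1$: its conjugacy class then contributes to $a(G)_0$ and forces $a(G)_0\ge 1$.

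Since $\mathfrak{h}^G=\{0\}$, Theorem \ref{thm1.1.1} gives a decomposition $\mathfrak{h}=\mathfrak{h}_1\oplus\cdots\oplus\mathfrak{h}_m$ and $G\cong G_1\times\cdots\times G_m$, where each $G_i$ is generated by the complex reflections with roots in $\mathfrak{h}_i$ and acts irreducibly on the nontrivial module $\mathfrak{h}_i$; in particular $\mathfrak{h}_i^{G_i}=\{0\}$, so $\supp(G_i)=\mathfrak{h}_i$. I would first check that each factor $G_i$ is again well-generated. Every complex reflection $s$ of $G$ respects the product decomposition, and since $\rk(\id_{\mathfrak{h}}-s)=1$ it must act as the identity on all but one summand; hence the reflections of $G$ are exactly the reflections of the individual factors $G_i$. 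Choosing a generating set of $\rk(G)=\dim\mathfrak{h}$ reflections for $G$ and projecting onto each factor shows that $G_i$ is generated by the $r_i$ reflections lying in $G_i$, with $\sum_i r_i=\dim\mathfrak{h}$. The factorization argument in the proof of Lemma \ref{supp(G)} shows that the roots of these $r_i$ reflections already span $\supp(G_i)=\mathfrak{h}_i$, so $\rk(G_i)=\dim\mathfrak{h}_i\le r_i$; since $\sum_i\rk(G_i)=\dim\mathfrak{h}=\sum_i r_i$, equality must hold termwise, and each $G_i$ is well-generated.

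Now each $G_i$ is an irreducible well-generated complex reflection group, so by Lemma \ref{coxeter} it contains a Coxeter element $c_i$, and by the lemma preceding Lemma \ref{coxeter} every eigenvalue of $c_i$ on $\mathfrak{h}_i$ is of the form $\zeta_{h_i}^{k}$ with $1\le k<h_i$ for a primitive $h_i$-th root of unity $\zeta_{h_i}$; in particular $c_i$ has no eigenvalue $1$, i.e.\ $\mathfrak{h}_i^{c_i}=\{0\}$. Setting $c:=(c_1,\dots,c_m)\in G_1\times\cdots\times G_m\cong G$, we obtain $\mathfrak{h}^c=\bigoplus_i\mathfrak{h}_i^{c_i}=\{0\}$, so $c$ realizes the desired element and $a(G)_0\ge 1$, whence $\dim_{\bb K}\hh_0(\Chered{G})\ge 1$.

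The main obstacle is the middle step: verifying that well-generatedness passes to the irreducible factors $G_i$. Everything else is a direct assembly of Proposition \ref{hhaj} with Lemma \ref{coxeter} and the eigenvalue lemma, but the rank bookkeeping — that a minimal reflection generating set of the product splits into minimal generating sets of the factors — must be handled carefully, using both that a complex reflection cannot mix the summands $\mathfrak{h}_i$ and that the roots of a reflection generating set span the support.
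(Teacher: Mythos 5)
Your proof is correct and follows essentially the same route as the paper: reduce via Proposition \ref{hhaj}\,\emph{i)} to showing $a(G)_0\ge 1$, split $G$ into irreducible factors via Theorem \ref{thm1.1.1}, check that each factor is well-generated, and take the product of the Coxeter elements supplied by Lemma \ref{coxeter}. Your rank-bookkeeping argument for why well-generatedness descends to the factors is actually more careful than the paper's, which passes over that step rather tersely.
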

 \begin{proof}
Theorem \ref{thm1.1.1} along with condition $\mathfrak{h}^G=\{0\}$ and Lemma \ref{supp(G)} imply that $\supp(G)=\mathfrak{h}_1\oplus\dots\oplus\mathfrak{h}_m=\Span_{S}\{\alpha_s^{\vee}\}$, where $\mathfrak{h}_i$ is an irreducible $G_i$-module for every $i=1, \dots, m$ and $\alpha_s^{\vee}$ is the root of the complex reflection $s$. Since $G$ is well-generated, we have that 
$\mathfrak{h}_1\oplus\dots\oplus\mathfrak{h}_m=\bigoplus_{i=1}^m\bigoplus_{s_i\in \mc{S}_i}\Span\{\alpha_{s_i}^{\vee}\}$. Since from Theorem \ref{thm1.1.1} we know that each irreducible complex reflection subgroups $G_i$ is generated by those complex reflections in $G$ whose roots belong to $\mathfrak{h}_i$, it follows that each $G_i$ for $1=1, \dots, m$, is an irreducible, well-generated complex reflection group. Consequently, by Lemma \ref{coxeter} each $G_i$ possesses a Coxeter element $c_i$. Take $c:=(c_1, \dots, c_m)$. By Lemma \ref{nouniteigenvalue}, this group element in $G_1\times\dots\times G_m$ has no eigenvalue equal to $1$ and corresponds to an element in $G$ with no eigenvalue equal to $1$. Thus, $a_0\geq1$ in this case. The claim follows then by Proposition \ref{hhaj}, $i)$.
\end{proof}
Let $\mc{A}_{n-l, l}^H:=\widehat{\mc{D}}_{n-l}\hat{\otimes}\widehat{H}_{1, c}(\bb C^l, H)$. Let $\mc{A}_{n-l, l}^{H, \hbar}$ be the $\bb C\llbracket\hbar\rrbracket$-algebra $\widehat{\mc{D}}_{n-l}^{\hbar}\hat{\otimes}_{\bb C\llbracket\hbar\rrbracket}\widehat{H}_{1, \hbar}(\bb C^l, H)$ where $\widehat{\mc{D}}_{n-l}^{\hbar}$ is the algebra of differential operators in $n-l$ variables with coefficients in the ring $\bb C\llbracket x_1, \dots, x_{n-l}, \hbar\rrbracket$. 
Let $\mc{A}_{n-l, l}^{H, \lau{\hbar}}$ denote the $\bb K$-algebra $\widehat{\mc{D}}_{n-l}^{\lau{\hbar}}\hat{\otimes}_{\bb K}\widehat{H}_{1, \lau{\hbar}}(\bb C^l, H)$ where $\widehat{\mc{D}}_{n-l}^{\lau{\hbar}}$ is the algebra of differential operators on the formal neighborhood of zero in $\bb K^{n-l}$. With the help of the previous results, we compute the Hochschild homology of $\mc{A}_{n-l, l}^{H, \lau{\hbar}}$ and the cyclic homology of the $\bb Z_2$-graded algebra $\mc{A}_{n-l, l}^{H, \lau{\hbar}}\otimes_{\bb C}\bb C[\varepsilon]$ which we need later for the proof of Proposition \ref{cwhom1}.  
\begin{corollary}
\label{hhchofhcmodule}
For every $m\in\bb Z_{\geq0}$, 
\[i)\quad\hh_m\big(\mc{A}_{n-l, l}^{H, \lau{\hbar}}\big)\cong\hh_{m-2n+2l}\big(\mc{D}_{\textrm{alg.}}(\bb C^l)\rtimes H\lau{\hbar}\big)\cong\bb K^{a_{m-2n+2l}}\]
where $a_{m-2n+2l}$ is as in Proposition \ref{hhaj}, \emph{i)}.
\begin{align*}
&ii)\quad\hc_m\big(\mc{A}_{n-l, l}^{H, \lau{\hbar}}\otimes_{\bb C}\bb C[\varepsilon]\big)\cong\hc_{m-(2n-2l)}\big(\mc{D}_{\textrm{alg.}}(\bb C^l)\rtimes H\lau{\hbar}\otimes_{\bb C}\bb C[\varepsilon]\big)\\
&\hspace{12.4em}\cong\bigoplus_{\gamma\in\Conj(H)}\big(\hc_{m-(2n-2l)-2k_{\gamma}}(\bb K)\oplus\bb K\cdot[\hspace{-1.9em}\underbrace{\varepsilon\otimes\dots\otimes\varepsilon}_{m-(2n-2l)-2k_{\gamma}-\textrm{times}}\hspace{-1.9em}]\big)
\end{align*}
where $2k_{\gamma}:=\dim(\bb C^l\oplus\bb C^{l*})^{\gamma}$, $\cdot$ denotes multiplication and $[\varepsilon\otimes\dots\otimes\varepsilon]$ is the cohomology class of  $\varepsilon\otimes\dots\otimes\varepsilon$.
\end{corollary}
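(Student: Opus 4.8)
The plan is to reduce everything to the two tensor factors of $\mc{A}_{n-l, l}^{H, \lau{\hbar}} = \widehat{\mc{D}}_{n-l}^{\lau{\hbar}} \hat\otimes_{\bb K} \widehat{H}_{1, \lau{\hbar}}(\bb C^l, H)$ by a Künneth argument, the point being that the formal differential-operator factor contributes nothing but a fixed degree shift. First I would record that $\widehat{\mc{D}}_{n-l}^{\lau{\hbar}}$, the homogenized Weyl algebra of differential operators on the formal $(n-l)$-polydisc over $\bb K$, has Hochschild homology $\bb K$ concentrated in degree $2(n-l)$ and zero elsewhere. This is the Hochschild--Kostant--Rosenberg computation for the Weyl algebra — the specialization of Theorem \ref{hhcher} to $H = \{\id\}$ on $\bb C^{n-l}$, where the only contribution comes from the identity, having eigenvalue $1$ of multiplicity $2(n-l)$ on $\bb C^{n-l} \oplus \bb C^{(n-l)*}$; the passage to the degreewise completion and to $\bb K$-coefficients introduces no new homology by (the argument of) Proposition \ref{prophh}.

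For part \emph{i)}, since all modules are flat over the field $\bb K$, the Künneth theorem for Hochschild homology gives
\[
\hh_m(\mc{A}_{n-l, l}^{H, \lau{\hbar}}) \cong \bigoplus_{i+j=m} \hh_i(\widehat{\mc{D}}_{n-l}^{\lau{\hbar}}) \otimes_{\bb K} \hh_j(\widehat{H}_{1, \lau{\hbar}}(\bb C^l, H)).
\]
By the previous paragraph only the term $i = 2(n-l)$ survives, so the sum collapses to $\hh_{m-2(n-l)}(\widehat{H}_{1, \lau{\hbar}}(\bb C^l, H))$; applying part \emph{i)} of Proposition \ref{hhaj} with $\mf{h} = \bb C^l$ and $G = H$ identifies this with $\hh_{m-2n+2l}(\mc{D}_{\textrm{alg.}}(\bb C^l) \rtimes H\lau{\hbar}) \cong \bb K^{a(H)_{m-2n+2l}}$, which is the claim.

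For part \emph{ii)} I would work with the Connes periodicity module structure over $\bb K[u]$ ($\deg u = -2$) and the Künneth theorem for cyclic homology, which presents $\hc_\bullet(A \otimes B)$ as the derived tensor product $\hc_\bullet(A) \otimes^{\mathbb L}_{\bb K[u]} \hc_\bullet(B)$. Because $\hh_\bullet(\widehat{\mc{D}}_{n-l}^{\lau{\hbar}})$ is one-dimensional and concentrated in degree $2(n-l)$, the periodicity operator $S$ is an isomorphism in the stable range and $\hc_\bullet(\widehat{\mc{D}}_{n-l}^{\lau{\hbar}})$ is free of rank one over $\bb K[u]$ with generator in degree $2(n-l)$; tensoring a free $\bb K[u]$-module has no derived corrections and merely shifts degrees, so
\[
\hc_m(\mc{A}_{n-l, l}^{H, \lau{\hbar}} \otimes_{\bb C} \bb C[\varepsilon]) \cong \hc_{m-2(n-l)}(\widehat{H}_{1, \lau{\hbar}}(\bb C^l, H) \otimes_{\bb C} \bb C[\varepsilon]).
\]
Part \emph{ii)} of Proposition \ref{hhaj} (with $\mf{h} = \bb C^l$, $G = H$) then replaces the Cherednik factor by $\mc{D}_{\textrm{alg.}}(\bb C^l) \rtimes H$, yielding the first displayed isomorphism of the statement. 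For the explicit decomposition I would invoke the conjugacy-class localization underlying Proposition \ref{hhaj} and \cite{AFLS00}: decomposing the cyclic homology of the crossed product over $\Conj(H)$, the $\gamma$-summand of the underlying Hochschild homology is the one-dimensional class supported on the fixed subspace $(\bb C^l \oplus \bb C^{l*})^\gamma$ of dimension $2k_\gamma$, which forces the degree shift $m-(2n-2l)-2k_\gamma$; for each $\gamma$ the dual-numbers factor contributes the cyclic homology of $\bb C[\varepsilon]$, which splits as the cyclic homology $\hc_\bullet(\bb K)$ of the ground field together with the reduced part spanned by the surviving nilpotent Hochschild cycle $[\varepsilon \otimes \dots \otimes \varepsilon]$. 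Summing over $\gamma$ gives the stated formula.

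The main obstacle is the cyclic half of part \emph{ii)}: cyclic homology has no naive Künneth isomorphism, so the argument must be routed through the $\bb K[u]$-module structure (equivalently, through mixed complexes and the Hood--Jones Künneth sequence), and one must check that the formal completion and the $\hbar$-localization are compatible with these constructions — this is precisely where I would reuse the completeness-of-the-filtration and $\Un(1)$-averaging devices from Proposition \ref{prophh} and Proposition \ref{hhaj}. A secondary technical point is legitimizing the Künneth isomorphism for the completed tensor product $\hat\otimes_{\bb K}$ rather than the algebraic one; this is harmless here only because the Weyl factor's Hochschild homology sits in a single degree, so the relevant spectral sequence degenerates and the computation descends to the algebraic model to which ordinary Künneth applies.
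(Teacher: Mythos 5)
Your part \emph{i)} is essentially the paper's argument: K\"unneth together with the fact that the Hochschild homology of the formal Weyl/differential-operator factor $\widehat{\mc{D}}_{n-l}^{\lau{\hbar}}$ is one-dimensional and concentrated in degree $2(n-l)$ (the paper cites \cite[Theorem 2]{Wod87} for this rather than specializing Theorem \ref{hhcher}, but the content is identical), followed by Proposition \ref{prophh} and Proposition \ref{hhaj}, \emph{i)}. For part \emph{ii)} you take a genuinely different route. The paper sidesteps any K\"unneth theorem for cyclic homology entirely: it writes down the explicit degree-shifting stabilization map $c_0\otimes\dots\otimes c_m\mapsto 1^{\otimes(2n-2l)}\otimes c_0\otimes\dots\otimes c_m$ from $\widehat{\C}_{\bullet}\big(\widehat{H}_{1,\lau{\hbar}}(\bb C^l,H)\otimes\bb C[\varepsilon]\big)$ into $\widehat{\C}_{\bullet+(2n-2l)}\big(\mc{A}_{n-l,l}^{H,\lau{\hbar}}\otimes\bb C[\varepsilon]\big)$, checks via part \emph{i)} and K\"unneth that it is an isomorphism on Hochschild homology, and then invokes \cite[Corollary 2.2.3]{Lod13} (an HH-isomorphism induces an HC-isomorphism, by the SBI sequence and the five lemma) before quoting the explicit conjugacy-class decomposition from \cite[(A.13) and (A.4)]{PPT07}. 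You instead run the Hood--Jones/$\bb K[u]$ K\"unneth machinery, exploiting the degeneracy of the Weyl factor. Both work; the paper's route is more elementary, while yours makes the degree shift conceptually transparent. One technical slip on your side: with the periodicity operator $S$ of degree $-2$, the cyclic homology of the Weyl factor ($\bb K$ in degrees $2(n-l), 2(n-l)+2,\dots$, with $S$ surjecting downward) is not a \emph{free} $\bb K[u]$-module but a cofree (divisible) one; the statement you actually need is that its mixed complex is quasi-isomorphic to the trivial mixed complex $\bb K$ placed in degree $2(n-l)$ --- which does hold, precisely because the Hochschild homology sits in a single degree, so the induced $B$-operator vanishes --- and this still yields the pure degree shift with no derived corrections. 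Your closing conjugacy-class decomposition, with the $2k_\gamma$-shift from the fixed subspace $(\bb C^l\oplus\bb C^{l*})^\gamma$ and the splitting of $\hc_{\bullet}(\bb K[\varepsilon])$ into $\hc_{\bullet}(\bb K)$ plus the class $[\varepsilon\otimes\dots\otimes\varepsilon]$, coincides with the paper's appeal to \cite{PPT07}.
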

\begin{proof}
$i)$~~This follows from K\"unneth's formula, Proposition \ref{prophh}, \cite[Theorem 2]{Wod87} and Proposition \ref{hhaj}, $i)$ together with the following fact. The $\bb{C}\llbracket\hbar\rrbracket$-algebra $\bb K$ of formal Laurent series has an increasing and exhaustive ring filtration by $\bb{C}\llbracket\hbar\rrbracket$-modules $\bb{C}\llbracket\hbar\rrbracket\subset\bb{C}\llbracket\hbar\rrbracket\hbar^{-1}\subset\dots\subset\bb{C}\llbracket\hbar\rrbracket\hbar^{-p}\subset\dots\subset\bb K$, where $\hbar$ is a single indeterminate here. This gives rise to a directed system in the category of $\bb{C}\llbracket\hbar\rrbracket$-modules. Since $\bb K$ satisfies the same universal property as the direct limit, 
there is an isomorphism of $\bb{C}\llbracket\hbar\rrbracket$-modules $\bb K\cong\colim_{p}\bb{C}\llbracket\hbar\rrbracket\hbar^{-p}$. Therefore, if $\widehat{K}_{n}:=\widehat{K}_n(\widehat{H}_{1, \hbar c}(\mathfrak{h}, G))$ denotes the $n$-th Hochschild chain complex module of the $1$-parameter formal deformation $\widehat{H}_{1, \hbar c}(\mathfrak{h}, G)$, then we recast the $n$-th Hochschild chain complex module $\widehat{\C}_{n}(\Chered{G})$ in the form $\widehat{\C}_{n}(\Chered{G})\cong\colim_{p}\widehat{K}_n(\widehat{H}_{1, \hbar c}(\mathfrak{h}, G))\hbar^{-p}$. As directed limits over directed systems commute with the homology functor, we get for the Hochschild homology $\hh_{\bullet}(\Chered{G})=\colim_{p}\hh_{\bullet}(\widehat{H}_{1, \hbar c}(\mathfrak{h}, G))\hbar^{-p}$
which combined with Proposition \ref{hhaj}, $i)$ implies the result. 

$ii)$~~Without any loss of generality assume that $\deg(\varepsilon)=0$. Since the ring of dual numbers is a flat $\bb C$-module, the tensor product over $\bb C$ with $\bb C[\varepsilon]$ preserves the natural injection $\widehat{H}_{1, \lau{\hbar}}(\bb C^l, H)\hookrightarrow\mc{A}_{n-l, l}^{H, \lau{\hbar}}$ and we obtain an injective embedding of $\bb Z_2$-graded algebras $f: \widehat{H}_{1, \lau{\hbar}}(\bb C^l, H)\otimes\bb C[\varepsilon]\hookrightarrow\mc{A}_{n-l, l}^{H, \lau{\hbar}}\otimes\bb C[\varepsilon]$. In turn, it induces an injection of chain complexes $f: \widehat{\C}_{\bullet}\big(\widehat{H}_{1, \lau{\hbar}}(\bb C^l, H)\otimes\bb C[\varepsilon]\big)\hookrightarrow\widehat{\C}_{\bullet+(2n-2l)}\big(\mc{A}_{n-l, l}^{H, \lau{\hbar}}\otimes\bb C[\varepsilon]\big)$ by
 \begin{equation*}
\hspace{0.3em}{\big(a_0\otimes x_0+y_0\varepsilon\big)\otimes\dots\otimes \big(a_m\otimes x_m+y_m\varepsilon\big)}\mapsto\hspace{-0.3em}
1^{\otimes(2n-2l)}\otimes\big(a_0\otimes x_0+y_0\varepsilon\big)\otimes\dots\otimes\big(a_m\otimes x_m+y_m\varepsilon\big)
\end{equation*}  
which by abuse of notation we keep calling $f$. The corresponding induced map \[f_*: \hh_{\bullet}\big(\widehat{H}_{1, \lau{\hbar}}(\bb C^l, H)\otimes\bb C[\varepsilon]\big)\rightarrow\hh_{\bullet+(2n-2l)}\big(\mc{A}_{n-l, l}^{H, \lau{\hbar}}\otimes\bb C[\varepsilon]\big)\] is obviously injective, too. Indeed, assume that $f_*\Big(\big(a_0\otimes x_0+y_0\varepsilon\big)\otimes\dots\otimes \big(a_m\otimes x_m+y_m\varepsilon\big)\Big)$ is an $m+(2n-2l)$-boundary. This implies that there are $c_0, \dots, c_{m+(2n-2l)+1}\in\mc{A}_{n-l, l}^{H, \lau{\hbar}}\otimes\bb C[\varepsilon]$ such that
\begin{align*}
&f_*\Big(\big(a_0\otimes x_0+y_0\varepsilon\big)\otimes\dots\otimes \big(a_m\otimes x_m+y_m\varepsilon\big)\Big)\\
&=1\otimes\dots\otimes 1\otimes\sum_{r=2n-2l}^{m+(2n-2l)+1}(-1)^rc_{2n-2l+1}\otimes\dots\otimes c_rc_{r+1}\otimes\dots\otimes c_{m+(2n-2l)+1}\\
&=1\otimes\dots\otimes 1\otimes\sum_{r=0}^{m+1}(-1)^rc_{0}\otimes\dots\otimes c_rc_{r+1}\otimes\dots\otimes c_{m+1}.
\end{align*} 
From that we deduce that $\big(a_0\otimes x_0+y_0\varepsilon\big)\otimes\dots\otimes \big(a_m\otimes x_m+y_m\varepsilon\big)$ is an $m$-boundary. By K\"unneth's formula and claim \emph{i)} of the proposition, the induced map $f_*$ is an isomorphism. Then, by virtue of \cite[Corollary 2.2.3]{Lod13}, the map $f$ induces an isomorphism in cyclic homology which combined with Proposition \ref{hhaj}, \emph{ii)} yields the first isomorphism in claim \emph{ii)}. The second one is implied by the isomorphism
\[\hc_{\bullet}(\Weyl_{\hbar}(\bb C^l\oplus\bb C^{l*})[T^{-1}]\rtimes H\otimes\bb C[\varepsilon])\cong\oplus_{\gamma\in\Conj(H)}\hc_{\bullet-2k_{\gamma}}(\bb K[\varepsilon])\] 
in \cite[(A.13)]{PPT07} and the isomorphism $\hc_{p}(\bb K[\varepsilon])\cong\big(\hc_{p}(\bb K)\oplus\bb K\cdot[\varepsilon^{\otimes p}]\big)$, $p\in\bb Z_{\geq0}$, in \cite[(A.4)]{PPT07}. In the former isomorphism, we use that $\mc{D}_{\textrm{alg.}}(\bb C^l)\rtimes G\lau{\hbar}$ is isomorphic to the localization of $\Weyl_{\hbar}(\bb C^l\oplus\bb C^{l*})\rtimes G$ with respect to $T$, where $\Weyl_{\hbar}(\bb C^l\oplus\bb C^{l*})$ is the homogenization of the Weyl algebra $\Weyl(\bb C^l\oplus\bb C^{l*})$ over $\bb C\llbracket\hbar\rrbracket$ by means of the single indeterminate $\hbar$ with $\deg(\hbar)=1$.  
\end{proof}
\section{Trace densities and hypercohomology}
\label{sec4}
In this section, we generalize the standard Engeli-Felder trace density construction \cite{EF08} for the sheaf of holomorphic differential operator $\mc{D}_X$ on a complex manifold $X$ to the case of an actual and formal deformation of the sheaf of skew-group algebras $p_*\mc{D}_X\rtimes G$ on $Y$. The main ingredient, which allows us to do this vast generalization, is the construction of $\mc{H}_{1, c, X, G}$ via formal geometry in \cite{, Vit19, PhDVit19}. For completeness, we review the basics of Gelfand-Kazhdan's formal geometry in Section \ref{formalgeom} and the formal geometric construction of $\mc{H}_{1, c, X, G}$ in Section \ref{formalgeomconstrofcherednik}. This construction coupled with a  modification of the Engeli-Felder trace density map in \cite{RT12} yields a trace density morphism for the sheaf of Cherednik algebras $\mc{H}_{1, c, X, G}$ from \cite{Eti04}. This trace density map construction goes through in the case when the complex valued parameters $c$ are replaced by formal parameters $\hbar$. We use these maps to identify the hypercohomology of the Hochschild chain complex of $\mc{H}_{1, \hbar, X, G}$ with the Chen-Ruan orbifold cohomology with coefficients in the ring $\bb C\llbracket\hbar\rrbracket$.  
\subsection{Hochschild $(2n-2l)$-cocycle of the algebras $\HC$ and $\mc{A}_{n-l, l}^{H, \hbar}$}
\label{nonzerofunctionals}
Assume that the subgroup $H$ of $G$ is such that $\hh_{\bullet}(\widehat{H}_{1, c}(\bb C^l, H))$ is nontrivial. In that case, the algebra $\widehat{H}_{1, c}(\bb C^l, H)$ admits at least one nontrivial  trace. 
\begin{remark}
\label{nonzerotracegr}
For instance, when $H=S_n$, $(n\geq 2)$ and for generic values of $c$, the trace group of $\widehat{H}_{1, c}(\bb C^l, H)$ is nontrivial (see, e.g., \cite{BEG04}). 
\end{remark}
Apart from the case of a cyclic parabolic subgroup $H$ of $G$, by Corollary \ref{dimoftracegroupofformalcher}, $\hh_0(\widehat{H}_{1, \hbar}(\bb C^l, H))$ has nontrivial linear functionals when $H$ is a well-generated finite complex reflection group with no $H$-fixed points on $\bb C^l$. This guaranties the existence of sufficiently many non-zero trace density morphisms in the formal case.

Let $\phi$ and $\phi^{\hbar}$ be nontrivial linear traces of $\widehat{H}_{1, c}(\bb C^l, H)$ and $\widehat{H}_{1, \hbar}(\bb C^l, H)$, respectively.  As the identity in the rational Cherednik algebra is not a commutator,  one can safely assume that $\phi(\id)=1$ and $\phi^{\hbar}(\id)=1$.

Now,  with the help of K\"unneth's theorem, we define a Hochschild $(2n-2l)$-cocycle $\psi_{2n-2l}$ of $\mc{A}_{n-l, l}^H$ by
\begin{equation}
\label{cocycle}
\psi_{2n-2l}(a_0\otimes b_0\otimes\dots\otimes a_{2n-2l}\otimes b_{2n-2l}):=\tau_{2n-2l}(a_0\otimes\dots\otimes a_{2n-2l})\phi(b_0\dots b_{2n-2l})
\end{equation} 
for $a_0,\dots, a_{2n-2l}\in\widehat{\mc{D}}_{n-l}$, $b_0,\dots, b_{2n-2l}\in\widehat{H}_{1, c}(\bb C^l, H)$ where $\tau_{2n-2l}$ is the $\mf{gl}_{n-l}(\bb C)$-basic reduced Hochschild $(2n-2l)$-cocycle of $\widehat{\mc{D}}_{n-l}$ employed in \cite{EF08}. With the same notation, we define a Hochschild $(2n-2l)$-cocycle $\psi_{2n-2l}^{\hbar}$ of $\mc{A}_{n-l, l}^{H, \hbar}$  by
\begin{equation}
\label{formalcocycle}
\psi_{2n-2l}^{\hbar}(a_0\otimes b_0\otimes\dots\otimes a_{2n-2l}\otimes b_{2n-2l}):=\tau_{2n-2l}^{\hbar}(a_0\otimes\dots\otimes a_{2n-2l})\phi^{\hbar}(b_0\dots b_{2n-2l})
\end{equation}
where $\tau_{2n-2l}^{\hbar}$ is the obvious extension of $\tau_{2n-2l}$ to a $\bb C\llbracket\hbar\rrbracket$-linear map from $(\mc{D}_{n-l}^{\hbar})^{\hat{\otimes} 2n-2l}$ to the dual $\mc{D}_{n-l}^{\hbar*}$. Consequently, the cocycle $\tau_{2n-2l}^{\hbar}$ is $\mf{gl}_{n-l}(\bb C)$-basic and reduced. The following proposition is analogous to \cite[Proposition $4$]{RT12}. 
\begin{proposition}
\label{basiccocycles}
The $(2n-2l)$-cocycles $\psi_{2n-2l}$ and $\psi_{2n-2l}^{\hbar}$ are $\mf{gl}_{n-l}(\bb C)\oplus\mf{z}$-basic. 
\end{proposition}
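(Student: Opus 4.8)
The plan is to verify directly the two defining properties of a basic cochain, namely $\big(\mf{gl}_{n-l}(\bb C)\oplus\mf z\big)$-invariance and $\big(\mf{gl}_{n-l}(\bb C)\oplus\mf z\big)$-horizontality, exploiting that $\psi_{2n-2l}$ is by construction the product of the cocycle $\tau_{2n-2l}$ on the Weyl factor $\widehat{D}_{n-l}$ and the trace $\phi$ on the Cherednik factor $\widehat{H}_{1,c}(\bb C^l,H)$. The summand $\mf{gl}_{n-l}(\bb C)$ acts on $\mc A_{n-l,l}^H$ through its standard embedding into $\widehat{D}_{n-l}$ as degree-zero quadratic differential operators, that is, by inner derivations $\aad_{w\otimes 1}$ supported on the Weyl factor, whereas the central direction $\mf z$ acts by an inner derivation $\aad_{1\otimes z}$ supported on the Cherednik factor, $z$ being its grading (Euler) element. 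For an inner derivation $\aad_u$ the associated Lie derivative and contraction on Hochschild cochains are, up to the usual Cartan-calculus signs, $(L_u\psi)(a_0,\dots,a_m)=-\sum_j\psi(a_0,\dots,[u,a_j],\dots,a_m)$ and $\iota_u\psi$ the corresponding insertion of $u$ into the internal slots, and \emph{basic} means $L_u\psi=0=\iota_u\psi$ for all $u$ in the two summands. I shall use that $\tau_{2n-2l}$ is $\mf{gl}_{n-l}(\bb C)$-basic and reduced, so that it vanishes once any argument equals $1$, and that $\phi$ is a trace, so $\phi(c_0\cdots c_m)$ is cyclically invariant and annihilates commutators.

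For invariance I treat the two summands in turn. Given $\xi\in\mf{gl}_{n-l}(\bb C)$ with lift $w\otimes 1$, the derivation acts on a homogeneous tensor only through the Weyl component, $a_j\otimes b_j\mapsto[w,a_j]\otimes b_j$, so the scalar $\phi(b_0\cdots b_m)$ factors out and $L_\xi\psi_{2n-2l}=(L_\xi\tau_{2n-2l})\cdot\phi=0$ by invariance of $\tau_{2n-2l}$. For the generator $z$ of $\mf z$ the derivation acts only on the Cherednik components, $a_j\otimes b_j\mapsto a_j\otimes[z,b_j]$, so $\tau_{2n-2l}(a_0,\dots,a_m)$ factors out and the remaining $\phi$-factor collapses by the Leibniz rule to $-\phi\big([z,\,b_0\cdots b_m]\big)$, which vanishes since $\phi$ is a trace. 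Hence $\psi_{2n-2l}$ is $\big(\mf{gl}_{n-l}(\bb C)\oplus\mf z\big)$-invariant.

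Horizontality is where the two summands behave asymmetrically, and this is the heart of the argument. Contracting with $\xi\in\mf{gl}_{n-l}(\bb C)$ inserts $w\otimes 1$: its Weyl component $w$ enters $\tau_{2n-2l}$, while its Cherednik component is the unit, whose insertion leaves $\phi(b_0\cdots b_m)$ unchanged; this factor then pulls out of the alternating sum and $\iota_\xi\psi_{2n-2l}=(\iota_\xi\tau_{2n-2l})\cdot\phi=0$ by horizontality of $\tau_{2n-2l}$. Contracting with $z$ inserts $1\otimes z$: now it is the Weyl component that is the unit, so every resulting term carries a factor $\tau_{2n-2l}(\dots,1,\dots)$ which vanishes because $\tau_{2n-2l}$ is reduced. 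Thus $\iota_z\psi_{2n-2l}=0$, and $\psi_{2n-2l}$ is basic.

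The formal statement follows verbatim after replacing $\bb C$ by $\bb K$, the cocycle $\tau_{2n-2l}$ by the $\mf{gl}_{n-l}(\bb K)$-basic reduced cocycle $\tau_{2n-2l}^{\hbar}$ of $\widehat{D}_{n-l}^{\lau{\hbar}}$ and $\phi$ by $\phi^{\hbar}$; since $\hbar$ is central, the $\bb K$-linear Lie derivatives and contractions still split along the two tensor factors exactly as above. I expect the genuine obstacle to be the $\mf z$-horizontality: it rests entirely on the \emph{reducedness} of $\tau_{2n-2l}$ (respectively $\tau_{2n-2l}^{\hbar}$) at \emph{every} slot, so I would first pin down the precise normalization of the Engeli--Felder and Feigin--Felder--Shoikhet cocycles, in particular confirm via their cyclic symmetry that insertion of the unit in the exceptional initial slot is also killed, and correspondingly fix the explicit inner-derivation lifts $w\otimes 1$ and $1\otimes z$ realizing $\mf{gl}_{n-l}(\bb C)\oplus\mf z$ so that the two contractions indeed separate along the Weyl and Cherednik factors as claimed.
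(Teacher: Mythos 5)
Your verification is correct and is exactly the route the paper takes: its proof consists of the single line that the claim follows by direct verification as in \cite[Proposition 4]{RT12}, which is precisely the factorization argument you spell out (invariance from $\mf{gl}_{n-l}$-invariance of $\tau_{2n-2l}$ and the trace property of $\phi$; horizontality from $\mf{gl}_{n-l}$-horizontality of $\tau_{2n-2l}$ for the first summand and from reducedness of $\tau_{2n-2l}$ for the $\mf{z}$-summand, since the lift of $A\in\mf{z}$ is $1\otimes\varphi_c(A)$ and so inserts the unit into the Weyl slots). Your only imprecision is cosmetic: the $\mf{z}$-lift is $1\otimes\varphi_c(A)$ with $\varphi_c(A)=-\sum_{i,j}A_{ij}y_ju_i+\sum_{s}\frac{2c(s)}{1-\lambda_s}\lambda_{A,s}(1-s)$ rather than a single Euler element, but the argument uses only that it is supported on the Cherednik factor, which holds.
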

\begin{proof}
The proof is by verification and is the same as the proof of \cite[Proposition 4]{RT12}.
\end{proof}
\subsection{Review of Gelfand-Kazhdan's formal geometry}
\label{formalgeom}
In its essence, none of the material in this section is original. The presentation here repeats almost verbatim the summary of Gelfand-Kazhdan's formal geometry in \cite{Vit19, PhDVit19} where the reader can also find further relevant references on the subject.

We call the set of points $x$ in $X$ with stabilizer $\Stab(x)=H$ an isotropy type of type $H$ and denote it by $X_H$. It is a locally closed, not necessarily connected submanifold in $X$. Let $I$ be a finite set indexing the connected components of $X_H$. Then, for every $i\in I$, we denote by $X_H^i$ the $i$-th connected component of $X_H$ with  canonical inclusion map  $j_H^i:X_H^i\hookrightarrow X$. We denote by $X_i^H$ the unique connected component of the closed fixed point submanifold $X^H$ of $H$ in $X$ with canonical inclusion $j_i^H:X_i^H\hookrightarrow X$ containing the stratum $X_H^i$. As $G$ is finite, its action defines a stratification of $X$ whose strata are exactly the connected components of the isotropy types in $X$. For an exposition of stratified spaces, we refer the reader, e.g., to \cite{OR04}.

As the restricted tangent bundle $TX|_{X^H}$ of $X$ to the fixed point submanifold $X^H$ is $H$-equivariant, the normal quotient bundle $\pi: \N\to X^H$ is identified with a subbundle of $TX|_{X^H}$ such that $TX^H\oplus\N\cong TX|_{X^H}$ where $TX^H$ is the tangent bundle to $X^H$. As the group action of $G$ is per assumption faithful, the subgroup $H$ acts faithfully on the fibers of $\N$. Hence, on each connected component $X_i^H$ of codimension $l$, the subgroup can be embedded in $\GL_l(\bb C)$. Let $Z$ denote the centralizer of the image of the embedding of $H$ in $\GL_l(\bb C)$ and let $\mf{z}$ be the corresponding Lie algebra.

Now, let $\mc{N}$ be the locally free $\OO_{X_i^H}$-module corresponding to the restriction of the normal bundle $\N$ to the connected component $X_i^H$ with $\codim X_H^i=l$. Let $ \coor{\mc{N}}$ denote the set of pairs of a closed immersion of $\bb C$-ringed spaces $\Phi_x:=(\varphi, \varphi^{\#}): (0, \widehat{\OO}_{n-l})\to(X_i^H, \OO_{X_i^H})$ with $x=\varphi(0)$ and an isomorphism of $\widehat{\OO}_{n-l}$-modules $f: \widehat{\OO}_{n-l}^{\oplus l}\to\varphi^*\mc{N}$ where $\widehat{\OO}_{n-l}$ is the ring of formal functions in a formal neighborhood at the origin of $\bb C^{n-l}$. The projection $\coor{\pi}: \coor{\mc{N}}\to X_i^H$, given by $(\Phi_x, f)\mapsto x$, turns $\coor{\mc{N}}$ into a fiber bundle over $X_i^H$ with fiber at $x$ bijective to the set of infinite jets $[\phi]_x$ of parametrizations $\phi:  \bb C^{n-l}\times \bb C^{l}\to\N$ at $0$ with $\phi(0, 0)\in\N_x$. Let $\bb G:=\Aut_{n-l}\times Z(\widehat{\OO}_{n-l})$ be the pro-Lie group in which $\Aut_{n-l}:=\Aut(\widehat{\OO}_{n-l})$ and $Z(\widehat{\OO}_{n-l})$ is the group of formal power series in the coordiantes ${\bf x}=(x_1, \dots, x_{n-l})$ of $\bb C^{n-l}$ with coefficients matrices in $Z$. It acts freely and transitively on the fiber of $\coor{\mc{N}}$ from the right which makes $\coor{\mc{N}}$ a principal $\bb G$-bundle. Let $W_{n-l}:=\Der(\widehat\OO_{n-l})$ be the Lie algebra of vector fields in the formal neighboorhhod of $0$ in $\bb C^{n-l}$. Gelfand-Kazhdan's formal geometry stipulates a fiberwise isomorphism between the tangent space $T_{(\Phi_x, f)}\coor{\mc{N}}$ and the Lie algebra semidirect sum $W_{n-l}\rtimes\mf{z}\otimes\widehat{\OO}_{n-l}$. This induces a flat holomorphic $\bb G$-equivariant connection 1-form $\omega$ with values in $W_{n-l}\rtimes\mf{z}\otimes\widehat{\OO}_{n-l}$ which in turn gives $\coor{\mc{N}}$ the structure of a transitive Harish-Chandra $(W_{n-l}\rtimes\mf{z}\otimes\widehat{\OO}_{n-l}, \bb G)$-torsor over $X_i^H$. 

As $\GL_{n-l}(\bb C)\times Z$ is a closed Lie subgroup of $\bb G$, the projection $\bb G\to\bb G/\big(\GL_{n-l}(\bb C)\times Z\big)$ defines a principal $\GL_{n-l}(\bb C)\times Z$-bundle. Hence, the projection map
\begin{align}
\label{princbund}
\coor{\mc{N}}\cong\coor{\mc{N}}\times_{\bb G}\bb G\longrightarrow\coor{\mc{N}}\times_{\bb G}\bb G/\big(\GL_{n-l}(\bb C)\times Z\big)\cong\coor{\mc{N}}/\big(\GL_{n-l}(\bb C)\times Z\big)
\end{align}
is a principal $\GL_{n-l}(\bb C)\times Z$-bundle. The total space $\coor{\mc{N}}$ of \eqref{princbund} is a homogenous principal $W_{n-l}\rtimes\mf{z}\otimes\widehat{\OO}_{n-l}$-space. As the Lie algebra action commutes with the action of $\GL_{n-l}(\bb C)\times Z$ in a way compatible with the Harish-Chandra pair $(W_{n-l}\rtimes\mf{z}\otimes\widehat{\OO}_{n-l}, \GL_{n-l}(\bb C)\times Z)$, the principal bundle $\coor{\mc{N}}\to\coor{\mc{N}}/\big(\GL_{n-l}(\bb C)\times Z\big)
$ is in fact a transitive Harish-Chandra $(W_{n-l}\rtimes\mf{z}\otimes\widehat{\OO}_{n-l}, \GL_{n-l}(\bb C)\times Z)$-torsor.

When $G$ is trivial, there is only one stratum-the manifold $X$ itself. In that case, the normal bundle to $X$ is of rank $0$ and the definition of $\coor{\mc{N}}$ reduces to the standard definition of the bundle of formal coordinate systems $\coor{X}$ on $X$ (see Section $3$ in \cite{BK04}). Correspondingly, as a set, $\coor{X}$ consists of all closed immersions of $\bb C$-ringed spaces $(\varphi, \varphi^{\#}): (0, \widehat{\OO}_n)\to(X, \OO_X)$ with $x=\varphi(0)$. Similarly to $\coor{\mc{N}}$, the bundle  $\coor{X}$ has the structure of a Harish-Chandra $(W_n, \Aut_n)$-torsor over $X$ with a flat holomorphic $\Aut_n$-equivariant connection $1$-form with values in $W_n$. Furthermore, the map $\coor{X}\to\coor{X}/\GL_{n}(\bb C)$ defines a Harish-Chandra $(W_n, \GL_n(\bb C))$-torsor (\emph{cf}. \cite[Theorem $4.13$, $(4)$]{Ye05} and Section $6.1.3$ in \cite{Alm14}). 
\subsection{Review of the formal geometric construction of $\mc{H}_{1, c, X, G}$} 
\label{formalgeomconstrofcherednik}
What follows, is a succinct recollection of the construction of the sheaf of Cherednik algebras $\mc{H}_{1, c, X, G}$ by means of formal geometry in \cite{Vit19, PhDVit19}. Here, we adhere to the structure of the presentation in \cite{Vit19}.
  
Now, let $(u_i)$, $(y_i)$ be bases of $\bb C^{l}$ and its dual, respectively, let the parameter $\lambda_{A, s}$ be as defined in \cite[Lemma 4.3]{Vit19}. It is shown in \cite[Proposition $4.4$]{Vit19} that the map 
\begin{align}
 \label{liealghom}
 \Phi_c: W_{n-l}\rtimes\mf{z}\otimes\widehat{\OO}_{n-l}&\to\mc{A}_{n-l, l}^H\\    
v+A\otimes p&\mapsto v\otimes\id+p\otimes\varphi_c(A)\nonumber
\end{align}
with $\varphi_c(A)=-\sum_{i, j}A_{ij}y_ju_i+\sum_{s\in\mc{S}}\frac{2c(s)}{1-\lambda_s}\lambda_{A, s}(\id_G-s)$ is a Lie algebra embedding. Then, the Lie algebra representation $\aad\circ\Phi_c: W_{n-l}\rtimes\mf{z}\otimes\widehat{\OO}_{n-l}\to\End(\mc{A}_{n-l, l}^H)$, where $\aad$ is the adjoint action, gives $\mc{A}_{n-l, l}^H$ the structure of a Harish-Chandra $(W_{n-l}\rtimes\mf{z}\otimes\widehat{\OO}_{n-l}, \GL_{n-l}(\bb C)\times Z)$-module. The localization per \cite{BK04} of the Harish-Chandra module $\mc{A}_{n-l, l}^H$ with respect to the torsor  \eqref{princbund} is equivalent to a  holomorphic $\GL_{n-l}(\bb C)\times Z$-equivariant vector bundle $\coor{\mc{N}}\times\mc{A}_{n-l, l}^H\to\coor{\mc{N}}$ with a flat holomorphic $\GL_{n-l}(\bb C)\times Z$-equivariant connection $\nabla_H^i:=d+\aad(\Phi_c\circ\omega)(\cdot)$ with values in $\mc{A}_{n-l, l}^H$. Let $\coor{\pi}_*\OO_{\textrm{flat}}(\coor{\mc{N}}\times\mc{A}_{n-l, l}^H)|_{X_H^i}$ denote the sheaf of flat sections of that bundle, restricted to $X_H^i$. From now one, for the sake of brevity, we write $W_{x, H}^i:=W_x\cap X_H^i$. Let $\widehat{W}_x$ denote the  completion of $W_x$ with respect to the analytic subset $W_{x, H}^i:=W_x\cap X_H^i$. The first main result in the gluing procedure is the following theorem (see \cite[Theorem $5.10$]{Vit19}).
\begin{theorem}
For every parabolic subgroup $H$ of $G$ and every $H$-invariant linear slice $W_x$, there is an isomorphism of $\bb C$-algebras 
\[\mc{Y}_{H, W_x}^i: j_{H*}^i\big(\coor{\pi}_*\OO_{\textrm{flat}}(\coor{\mc{N}}\times\mc{A}_{n-l, l}^H)|_{X_H^i}\big)(W_x)\cong \OO_{\widehat{W}_x}(W_{x, H}^i)\otimes_{\OO(W_x)}H_{1, c}(W_x, H)\]
such that for every $H$-invariant linear slice $W_{x'}$ with $W_{x'}\subset W_x$, we have $\mc{Y}_{H, W_x}^i|_{W_{x'}}=\mc{Y}_{H, W_{x'}}^i$.
\end{theorem}
In the following, we describe the method by means of which the sheaves $j_{H*}^i\big(\coor{\pi}\OO_{\textrm{flat}}(\coor{\mc{N}}\times\mc{A}_{n-l, l}^H)|_{X_H^i}\big)$ on the strata of various codimensions can be  glued into a single sheaf on $X$ in the $G$-equivariant topology. The stratification defines a finite increasing filtration of $X$ into $G$-invariant open subsets $F^0(X)=\mathring{X}\subset F^1(X)\subset\cdots\subset F^{l_{\max}}(X)=X$ where $\mathring{X}$ is the principal stratum, $F^{k}(X)$ is the open disjoint union of strata of codimension less or equal to $k$. For every $k$, we successively define a sheaf $\mc{S}^k$ on $F^k(X)$ by gluing $\mc{S}^{k-1}$ with all $j_{H*}^i\big(\coor{\pi}\OO_{\textrm{flat}}(\coor{\mc{N}}\times\mc{A}_{n-l, l}^H)|_{X_H^i}\big)$ on the strata of codimension $k$ in $X$. The gluing is implemented locally on $H$-invariant linear slices. For the gluing conditions we utilize the fact that there is a morphism of Harisch-Chandra $(W_{n-l}\rtimes\mf{z}\otimes\widehat{\OO}_{n-l}, \GL_{n-l}(\bb C)\times Z)$-modules 
\[\id\otimes\widehat{\Theta}_c: \mc{A}_{n-l, l}^H\to\widehat{\mc{D}}_{n-l}\hat{\otimes} \widehat{\mc{D}}_l[\delta^{-1}]\rtimes H\] 
where $\delta=\prod_{s\in\mc{S}}\alpha_s\in\bb C[\bb C^l]$ is the discriminant and $\widehat{\Theta}_c$ is the completed Dunkl embedding. We begin by setting $\mc{S}^0:=\mc{D}_{\mathring{X}}\rtimes G$. Next, for every  basic open set $\ind_H^GW_x$ with $x\in X_H^i$ and $\codim(X_H^i)=1$, we define the set $\mc{S}^1(\ind_H^GW_x)$ of pairs of sections 
\begin{align*}
&\sum_{(g, g')\in G/H\times G/H}g\otimes p_{gg'}\otimes g'\in\bb CG\otimes_{\bb CH}(\coor{\pi}_*\OO_{\textrm{flat}}(\coor{\mathring{X}}\times\widehat{D}_n)(W_x\setminus X_H^i)\rtimes H)\otimes_{\bb CH}\bb CG,\\
&\sum_{(g, g')\in G/H\times G/H}g\otimes s_{gg'}\otimes g'\in\bb CG\otimes_{\bb CH}\coor{\pi}_*\OO_{\textrm{flat}}(\coor{\mc{N}}\times\mc{A}_{n-l, l}^H)|_{X_H^i}(W_{x, H}^i)\otimes_{\bb CH}\bb CG
\end{align*} 
saturating the conditions
\begin{align}
\label{glcondcodimzeroone}
&1.\quad \mc{Y}(p_{gg'})\in\OO(W_x)[\mc{R}(W_x)^{-1}]\otimes_{\OO(W_x)}\mc{D}_X(W_x)\rtimes H,\quad \textrm{for all}~ (g, g')\in G/H\times G/H,\nonumber\\
&2.\quad \id\otimes\widehat{\Theta}_c(s_{gg'}([\phi]_{\psi(0, 0)}))=i_{\psi}( \mc{Y}(p_{gg'}))\quad \textrm{for all}~ (g, g')\in G/H\times G/H, \infty-\textrm{jets}~~[\phi]\in\coor{\mc{N}}. 
 \end{align}
Here, $\mc{R}(W_x)$ is the multiplicative subset of $\OO(W_x)$ comprised of $1$ and all holomorphic  functions $f: W_x\to\bb C$ with $f|_{W_x\cap D}=0$ and $f(p)\neq0$ for all $p\in W_x\setminus D$ where $D$ is defined as in Section \ref{global}. Moreover, $\psi$ is a parametrization of $W_x$ with $\psi(0)=\pi(\phi(0, 0))\in W_x$ and $i_{\psi}$ denotes the Taylor expansion with respect to $({\bf x}, {\bf y})=(0, *)$ (see Section \ref{global}). By \cite[Proposition 6.1]{Vit19}, the collection of algebras $\mc{S}^{1}(\ind_H^GW_x)$ induces a sheaf $\mc{S}^1$ in the $G$-equivariant topology of $F^1(X)$. By \cite[Proposition 6.2]{Vit19}, there exists  an isomorphism of sheaves of algebras
\begin{align*}
\mf{X}^1: \mc{S}^1\to\mc{H}_{1, c, F^1(X), G}.
\end{align*}
in the $G$-equivariant topology of $X$. By induction, for every $k$ with $2\leq k\leq l_{\max}\leq n$ and every basic open set $\ind_K^GW_x$ with $x\in X_K^j$ and $\codim(X_K^j)=k$, we define $\mc{S}^k(\ind_K^GW_x)$ analogously as the set of pairs of sections
\begin{align*}
q&\in\mc{S}^{k-1}(\ind_K^G(W_x\setminus X_K^j)),\\
\sum_{(g, g')\in G/K\times G/K}g\otimes s_{gg'}\otimes g'&\in\bb CG\otimes_{\bb CK}\coor{\pi}_*\OO_{\textrm{flat}}(\coor{\mc{N}}\times\mc{A}_{n-l, l}^K)|_{X_K^j}(W_{x, K}^i)\otimes_{\bb CK}\bb CG
\end{align*} 
satisfying the gluing condition
\begin{align}
\label{glcontwo}
\sum_{(g, g')\in G/K\times G/K}g\otimes(\id\otimes\widehat{\Theta}_c)\left(s_{gg'}\big([\phi]_{\psi(0, 0)}\big)\right)\otimes g'=\sum_{(g, g')\in G/K\times G/K}g\otimes i_{\psi}(d_{gg'})\otimes g'
\end{align}
where $\mf{X}^{k-1}(q)=\sum_{(g, g')\in G/K\times G/K}g\otimes d_{gg'}\otimes g'$ with $d_{gg'}\in H_{1, c}(W_x\setminus X_K^j, K)$ according to \cite[Corollary B.8]{Vit19}. This definition induces a presheaf $\mc{S}^k$ in the $G$-equivariant topology on $X$. The main result is the following theorem (see \cite[Theorem 6.3]{Vit19}). 
\begin{theorem}
\label{maingluingresult}
For every integer $k$ with $1\leq k \leq l_{\max}$, the assignment $\ind_H^GW_x\mapsto\mc{S}^{k}(\ind_H^GW_x)$ defines a sheaf of algebras $\mc{S}^k$ such that $\mc{S}^k\cong\mc{H}_{1, c, F^k(X), G}$ as sheaves of algebra in the $G$-equivariant topology of $F^k(X)$. 
\end{theorem}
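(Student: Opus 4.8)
The plan is to argue by induction on the filtration level $k$, exploiting the fact that $F^k(X)$ is obtained from $F^{k-1}(X)$ by adjoining the union of the codimension-$k$ strata $X_K^j$, which is open in $F^k(X)$. The base case $k=1$ is already in place: by \cite[Proposition 6.1]{Vit19} the assignment $\ind_H^G W_x\mapsto\mc{S}^1(\ind_H^G W_x)$ defines a sheaf, and by \cite[Proposition 6.2]{Vit19} there is an algebra isomorphism $\mf{X}^1:\mc{S}^1\to\mc{H}_{1, c, F^1(X), G}$. So I would assume inductively that $\mc{S}^{k-1}$ is a sheaf of $\bb C$-algebras on $F^{k-1}(X)$ together with an isomorphism $\mf{X}^{k-1}:\mc{S}^{k-1}\xrightarrow{\sim}\mc{H}_{1, c, F^{k-1}(X), G}$, and I would produce the corresponding data at level $k$.

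First I would verify that the presheaf $\mc{S}^k$ is a sheaf on $F^k(X)$ for the $G$-equivariant topology, for which it suffices to check the sheaf axioms on the basis $\mf{B}_X^G$. For basic opens $\ind_H^G W_x$ lying entirely in $F^{k-1}(X)$ the definition reduces to $\mc{S}^{k-1}$, a sheaf by hypothesis; the only new opens are those $\ind_K^G W_x$ meeting a codimension-$k$ stratum $X_K^j$. Over such an open a section is a pair consisting of $q\in\mc{S}^{k-1}(\ind_K^G(W_x\setminus X_K^j))$ and an induced flat section $\sum_{(g, g')}g\otimes s_{gg'}\otimes g'$ on the stratum satisfying the compatibility \eqref{glcontwo}. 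Restriction of such a pair restricts each component separately, and since both $\mc{S}^{k-1}$ and the sheaf of flat sections $\coor{\pi}_*\OO_{\textrm{flat}}(\coor{\mc{N}}\times\mc{A}_{n-l, l}^K)$ are sheaves, locally matching pairs glue uniquely; I would then only need to check that the glued pair still satisfies \eqref{glcontwo}, which follows because that condition is an equality of Taylor expansions along the stratum and is therefore local.

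Next I would build the isomorphism $\mf{X}^k$. On the dense part $\ind_K^G(W_x\setminus X_K^j)$ I set $\mf{X}^k$ equal to $\mf{X}^{k-1}(q)$, a section of $\mc{H}_{1, c, F^{k-1}(X), G}$; the content of the construction is that the flat section on the stratum encodes precisely the datum needed to extend this across $X_K^j$. Concretely, by \cite[Corollary B.8]{Vit19} the section $\mf{X}^{k-1}(q)$ is represented by a family $d_{gg'}\in H_{1, c}(W_x\setminus X_K^j, K)$, and condition \eqref{glcontwo} asserts that the completed Dunkl embedding $\id\otimes\widehat{\Theta}_c$ of the flat section $s_{gg'}$ reproduces the Taylor expansion $i_\psi(d_{gg'})$; this is exactly the gluing constraint that, via the slice isomorphism of \cite[Theorem 5.5]{Vit19} and the uniqueness-of-representation statement underlying the construction of $\mc{H}_{1, c, X, G}$ in \cite{Vit19}, identifies the pair with a unique Cherednik section on $F^k(X)$. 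I would then check that $\mf{X}^k$ respects the algebra structures, which reduces to compatibility of the componentwise products with $\mf{X}^{k-1}$ and with the Dunkl embedding, and that it is bijective: injectivity from uniqueness of the representation, surjectivity by restricting a given Cherednik section to the open part and to the formal neighborhood of the stratum to recover the pair.

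The main obstacle I anticipate is not the inductive bookkeeping but the verification that the gluing datum is genuinely well-defined and compatible across different slices and with the $G$-action encoded in the induction $\bb CG\otimes_{\bb CK}(-)\otimes_{\bb CK}\bb CG$. Specifically, one must show that \eqref{glcontwo} is independent of the chosen parametrization $\psi$ and linear slice $W_x$, that the Taylor-expansion operator $i_\psi$ matches the image of $\id\otimes\widehat{\Theta}_c$ as $\GL_{n-l}(\bb C)\times Z$-equivariant data on the Harish--Chandra torsor \eqref{princbund}, and that these local identifications are equivariant enough to descend to the $G$-equivariant topology. Establishing this coherence is what turns $\mf{X}^k$ into a well-defined sheaf isomorphism rather than merely a collection of local bijections.
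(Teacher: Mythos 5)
The first thing to note is that this paper does not actually prove Theorem \ref{maingluingresult}: it is imported verbatim from \cite[Theorem 6.6]{Vit19}, and the subsection containing it is explicitly a review of the gluing construction carried out there. So there is no in-paper argument to measure your proposal against; the only material available here is the description of the objects $\mc{S}^k$, the gluing condition \eqref{glcontwo}, and the citations to \cite[Propositions 6.1, 6.2, Theorem 5.5, Corollary B.8]{Vit19}. Judged on its own terms, your induction skeleton is the natural one and is consistent with how the paper presents the construction: base case from \cite[Propositions 6.1 and 6.2]{Vit19}, inductive definition of $\mc{S}^k$ by pairs $(q, \sum g\otimes s_{gg'}\otimes g')$ subject to \eqref{glcontwo}, sheaf axioms checked on the basis $\mf{B}_X^G$, and an isomorphism $\mf{X}^k$ assembled from $\mf{X}^{k-1}$ and the slice identification of \cite[Theorem 5.5]{Vit19}.

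The gap is that everything of substance is concentrated in the step you describe as "the flat section on the stratum encodes precisely the datum needed to extend this across $X_K^j$," and your proposal does not supply it. Injectivity of $\mf{X}^k$ is cheap, but surjectivity requires showing that for an arbitrary section of $\mc{H}_{1, c, F^k(X), G}$ over $\ind_K^GW_x$, the Taylor expansion $i_\psi(d_{gg'})$ of its restriction to $W_x\setminus X_K^j$ lies in the image of $\id\otimes\widehat{\Theta}_c$ applied to \emph{flat} sections of $\coor{\mc{N}}\times\mc{A}_{n-l,l}^K$ — i.e., that Dunkl operators, expanded transversally to the stratum, land exactly in the completed Dunkl image of the Harish--Chandra localization, and that the resulting flat section is independent of the parametrization $\psi$ by $\GL_{n-l}(\bb C)\times Z$-equivariance of the torsor \eqref{princbund}. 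You correctly flag this coherence issue as "the main obstacle," but flagging it is not the same as resolving it: this is precisely the content of the local analysis in \cite{Vit19} (the isomorphism $\mc{Y}_H^i$ and its compatibility with the Dunkl embedding), and without reproducing that analysis the proposal is an outline of the bookkeeping around the theorem rather than a proof of it. If you want a self-contained argument, the piece to supply is a proof that condition \eqref{glcontwo} cuts out a subalgebra of the pair-algebra that maps bijectively onto $H_{1,c}(\ind_K^GW_x, G)$ for a single basic open set; the sheaf-theoretic induction then goes through as you describe.
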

An immediate consequence of Theorem \ref{maingluingresult} is that for every stratum $X_H^i$ of codimension $l$, $0\leq l\leq n$, there is a map of sheaves
\begin{align}
\label{collapsingmap}
\rho: \mc{H}_{1, c, X, G}\to j_{H*}^i\big(\coor{\pi}_*\OO_{\textrm{flat}}(\coor{\mc{N}}\times\mc{A}_{n-l, l}^H)|_{X_H^i}\big)
\end{align}
in the $G$-equivariant topology of $X$ which we call \emph{collapsing map}. Concretely, for every $\ind_H^GW_x$ in $\mc{B}_{X}^G$ with $x\in X_H^i$, the collapsing map \eqref{collapsingmap} is given by
\begin{align}
\label{projonslices}
\rho: \Gamma (\ind_H^GW_x, \mc{H}_{1, c, X, G})&\longrightarrow\coor{\pi}_*\OO_{\textrm{flat}}(\coor{\mc{N}}\times\mc{A}_{n-l, l}^H)|_{X_H^i}(W_{x, H}^i)\\\nonumber  
(q, \sum_{(g, g')\in G/H\times G/H} g\otimes s_{gg'}\otimes g')&\mapsto\id_G\otimes s_{HH}\otimes \id_G
\end{align}
where $t_{HH}$ is the section representing the left coset of the identity in $G$. This is a well-defined map. For a $\ind_K^GW_x$ in $\mc{B}_{X}^G$ with $x$ lying on a stratum $X_K^j$,  which is contained in the closure of  $X_H^i$, the assignment  
\begin{equation}
\label{projonarbitraryopens}
\Gamma(\ind_K^GW_x, \mc{H}_{1, c, X, G})\rightarrow\fl(W_{x, H}^i)
\end{equation}
is defined in a more subtle fashion.With the help of \cite[Corollary B.9]{Vit19}, the map \eqref{projonarbitraryopens} can be expressed as the composition of the ensuing maps
\begin{align} 
\label{compofres}
&H_{1, c}(\ind_K^GW_x, G)\xrightarrow{\cong} \bb CG\otimes_{\bb CK}H_{1, c}(W_x, K)\otimes_{\bb CK}\bb CG\rightarrow H_{1, c}(W_x\setminus X_K^j, K)\nonumber\\
&\xrightarrow{\cong}\invlim_{\mf{B}_X^G}H_{1, c}(\ind_L^KW_y, K)\xrightarrow{\cong}\Big\{(s_y)\in\prod_{\substack{\ind_L^KW_y\in\mf{B}_X^G\\\ind_L^KW_y\subseteq W_x}} H_{1, c}(\ind_L^KW_y, K):~\res_{\ind_{L_1}^KW_{y'}}^{\ind_{L_{2}}^KW_y}(s_y)=s_{y'}, W_{y'}\subseteq W_y\Big\}\nonumber\\
&\rightarrow\Big\{(s_y)\in\prod_{\mf{B}_X^G\ni\ind_H^KW_y\subseteq W_x} H_{1, c}(\ind_H^KW_y, K):~\res_{\ind_{H}^KW_{y'}}^{\ind_{H}^KW_y}(s_y)=s_{y'}, W_{y'}\subseteq W_y\Big\}\nonumber\\
&\xrightarrow{\cong}\Big\{(s_y)\in\prod_{\mf{B}_X^G\ni\ind_H^KW_y\subseteq W_x}\bb CK\otimes_{\bb CH}H_{1, c}(W_y, H)\otimes_{\bb CH}\bb CK:~\res_{W_{y'}}^{W_y}(s_y)=s_{y'}, W_{y'}\subseteq W_y\Big\} \nonumber\\
&\twoheadrightarrow\Big\{(s_y)\in\prod_{y\in W_{x, H}^i} H_{1, c}(W_y, H):~\res_{W_{y'}}^{W_y}(s_y)=s_{y'}, W_{y'}\subseteq W_y\Big\}\rightarrow\fl(W_{x, H}^i)
 \end{align}
where in the last line, $W_y$ are $H$-invariant linear slices in $W_x\setminus X_K^j$ and the two-headed arrows stand for surjective maps. The last map in the composition \eqref{compofres} is defined as follows. By gluing condition \eqref{glcontwo}, each element $s_y$ in $H_{1, c}(W_y, H)$ is uniquely represented by a pair of sections
\[(q|_{W_y\setminus X_H^i}, \hat{s}|_{W_{y, H}^i})\in\mc{S}^{l}(W_y\setminus X_H^i)\times\fl(W_{y, H}^i).\] 
For all $y', y''\in W_{x, H}^i$ and $H$-invariant linear slices $W_{y'}, W_{y''}\subset W_x$, the sections $\hat{s}'|_{W_{y', H}^i}$ and $\hat{s}''|_{W_{y'', H}^i}$ coincide on every open set $W_{y''', H}^i$, contained in the intersection $W_{y', H}^i\cap W_{y'', H}^i$. By the the axioms of sheaves, there is a unique  section $\hat{s}|_{W_{x, H}^i}$ of $\fl$ over $W_{x, H}^i$ which restricts to each $\hat{s}|_{W_{y, H}^i}$. Hence, the assignment \eqref{projonarbitraryopens} is well-defined. The well-definition of maps \eqref{projonslices} and \eqref{projonarbitraryopens} shows that the collapsing morphism \eqref{collapsingmap} is a well-defined map of sheaves on the basis $\mf{B}_{X}^G$ and hence after taking projective limit a well-defined map of sheaves in the $G$-equivariant topology of $X$. We remark that in the special case, when $H=\{\id_G\}$, the corresponding collapsing map is completely determined by the definition of \eqref{projonslices} and \eqref{compofres}. 
\subsection{Construction of the trace density maps}
\label{constructionoftdm}
The construction of the sheaf of Cherednik algebras, reviewed in Section \ref{formalgeomconstrofcherednik}, allows us  to define trace densities following the construction methods of Section $2.3$ in \cite{EF08} and of Sections $4.2$, $4.4$ and $4.5$ in \cite{RT12}. Since these methods are considered standard by now, we  outline the main steps and go into details only where new phenomena appear. In this section, we follow the notation set in Section \ref{global}. 

Given a sheaf $\mc{F}$ of locally convex algebras (in the $G$-equivariant topology of $X$), the assignment $\Sh\big(U\mapsto\widehat{\C}_{\bullet}(\mc{F}(U))\big)$, where $\widehat{\C}_{\bullet}$ denotes the completed Hochschild chain complex and $\Sh$ is the sheafification functor, defines the Hochschild chain complex of the sheaf $\mc{F}$ on $X$ (in the $G$-equivariant topology). In what follows, we abuse notation by writing $\mc{C}_{\bullet}(\mc{F})$ for the above defined complex of sheaves. 

On the principal stratum $\mathring{X}$, the collapsing map \eqref{collapsingmap} gives rise to the map of complexes of left $\bb C_X$-modules
\begin{equation}
\label{principalcollapsingfmap}
\mc{C}_{\bullet}(\mc{H}_{1, c, X, G})\to\mc{C}_{\bullet}\left(j_{\id_G*}\coor{\pi}_*\OO_{\textrm{flat}}\big(\coor{\mathring{X}}\times\widehat{\mc{D}}_n\big)\right).
\end{equation}
in the $G$-equivariant topology of $X$. We successively compose the chain morphism \eqref{principalcollapsingfmap}  with the isomorphism $\mc{C}_{\bullet}\big(\coor{\pi}_*\OO_{\textrm{flat}}(j_{\id_G*}\coor{\mathring{X}}\times\widehat{\mc{D}}_n)\big)\cong\mc{C}_{\bullet}\big(j_{\id_G*}\mc{D}_{\mathring{X}}\big)$, induced by \cite[Proposition $5.3$]{Vit19}, and with Engeli-Felder's trace density morphism $(2)$ from \cite{EF08}. The resulting chain morphism in the $G$-equivariant topology on $X$ is 
\begin{equation}
\label{principaltracedensityequivtop}
\mc{C}_{\bullet}(\mc{H}_{1, c, X, G})\to j_{\id_G*}\Omega_{\mathring{X}}^{2n-\bullet}.
\end{equation}
Unfortunately, we do not know how to explicitly extend Morphism \eqref{principaltracedensityequivtop} to the whole of $X$ in the category of complexes of left $\bb C_X$-modules in the $G$-equivariant topology of $X$. Therefore, we apply the idea of the proof in Section $4.4$ in \cite{RT12} and extend the map \eqref{principaltracedensityequivtop} in the derived category ${\bf{D}}(\bb C_{X})$ of complexes of left $\bb C_X$-modules in the $G$-equivariant topology of $X$, instead. To that aim, we consider the following composition of chain maps in ${\bf{D}}(\bb C_{X})$:
\begin{align}
\label{derivedcomposition}
j_{\id_G*}\Omega_{\mathring{X}}^{2n-\bullet}\to Rj_{\id_G*}\Omega_{\mathring{X}}^{2n-\bullet}\cong R^{2n}j_{\id_G*}\bb C_{\mathring{X}}[2n]\cong\Omega_X^{2n-\bullet}.
\end{align}
As in the proof in Section $4.4$ in \cite{RT12}, the first isomorphism in the composition \eqref{derivedcomposition} follows from the fact that $\bb C_{\mathring{X}}[2n]\cong\Omega_{\mathring{X}}^{2n-\bullet}$ in ${\bf{D}}(\bb C_{\mathring{X}})$ and that $\bb C_{\mathring{X}}$ is an injective $\bb C_{\mathring{X}}$-module wherefore $Rj_{\id_G*}\Omega_{\mathring{X}}^{2n-\bullet}\cong Rj_{\id_G*}\bb C_{\mathring{X}}[2n]\cong R^{2n}j_{\id_G*}\bb C_{\mathring{X}}[2n]$. The second isomorphism in the composition \eqref{derivedcomposition} is due to the fact that for any union $U$ of (closed) submanifolds of (real) codimension $2$ and above, the de Rham cohomology groups of $X$ and $X\setminus U$ up to degree $\codim U-2$ are isomorphic. This fact implies that $R^{2n}j_{\id_G*}\bb C_{\mathring{X}}[2n]\cong{\bf{H}}^{2n}(j_{\id_G*}\Omega_{X}^{2n-\bullet})\cong\Omega_X^{2n-\bullet}$. A composition of Morphism \eqref{principaltracedensityequivtop} with the composition \eqref{derivedcomposition} in ${\bf D}(\bb C_X)$ yields in the $G$-equivariant topology on $X$ the map
\begin{equation*}
\mc{C}_{\bullet}(\mc{H}_{1, c, X, G})\to\Omega_{X}^{2n-\bullet}.
\end{equation*}
It is equivalent to the cochain map 
\begin{equation}
\label{localizedtracedensity}
\mc{C}_{\bullet}(\mc{H}_{1, c, X, G})\to p_*\Omega_{X}^{2n-\bullet}
\end{equation}
in the derived category ${\bf D}(\bb C_{Y})$ of $\bb C_{Y}$-modules on $Y$. We call Morphism  \eqref{localizedtracedensity} in ${\bf D}(\bb C_{Y})$ the \emph{trace density morphism} associated to the trivial subgroup of $G$.  

Let $\bb F_X:=\bb C_X\lau{\hbar_1}\cdots\lau{\hbar_k}$ be the sheaf of locally constant $\bb C\lau{\hbar_1}\cdots\lau{\hbar_k}$-valued functions on $X$ where $\bb C\lau{\hbar_1}\cdots\lau{\hbar_k}$ is the field of formal multivariate Laurent series. We recall that this sheaf has a fine resolution $\bb F_X\to\Omega_X^{\bullet}\lau{\hbar_1}\cdots\lau{\hbar_k}$ in the category of $\bb F_X$-modules. A stepwise localization of $\mc{H}_{1, \hbar, X, G}$ with respect to $\hbar_1$, \dots, $\hbar_{k-1}$ and $\hbar_k$ yields the sheaf of algebras $\mc{H}_{1, \lau{\hbar_1}, \dots, \lau{\hbar_k}, X, G}$. By repeating varbatim the steps from the previous paragraph, we obtain the cochain map
\begin{equation}
\label{formallocalizedtracedensity}
\mc{C}_{\bullet}(\mc{H}_{1, \lau{\hbar_1},\dots, \lau{\hbar_k}, X, G})\to p_*\Omega_{X}^{2n-\bullet}\lau{\hbar_1}\cdots\lau{\hbar_k}
\end{equation}
in ${\bf D}(\bb F_Y)$ on $Y$. 
Since $\bb C\llbracket\hbar\rrbracket\subset\bb C\lau{\hbar_1}\cdots\lau{\hbar_k}$ and by definition, the cocycle $\tau_{2n}^{\hbar}$ is $\bb C\llbracket\hbar\rrbracket$-linear and the image of $\tau_{2n}^{\hbar}$ lies fully in $\bb C\llbracket\hbar\rrbracket$, we can restrict the localizations on the left and right hand side of Morphism \eqref{formallocalizedtracedensity} to formal power series in $\hbar_1, \dots, \hbar_k$ in the same fashion as in Section $4.5$ in \cite{RT12}. This way, Morphism \eqref{formallocalizedtracedensity} restricts to the map
\begin{equation}
\label{formaltracedensity}
\chi_{i, \hbar}^{\id_G}: \mc{C}_{\bullet}(\mc{H}_{1, \hbar, X, G})\to p_*\Omega_{X}^{2n-\bullet}\llbracket\hbar\rrbracket
\end{equation}
in the derived category ${\bf D}(\bb C_{Y}\llbracket\hbar\rrbracket)$ of $\bb C_{Y}\llbracket\hbar\rrbracket$-modules which we call the \emph{formal trace density map} associated to the trivial subgroup of $G$. 

Let $X_i^H$ be the connected component of the fixed point submanifold of $H$ in $X$ containing the stratum $X_H^i$ in $X$ with $\codim(X_H^i)=l$, $l\geq 1$.
For the purpose of defining a trace density morphism, we need to extend the collapsing map \eqref{collapsingmap} to $X_i^H$. 
\begin{lemma}
 \label{extcollapsingmap}
Collapsing map \eqref{collapsingmap} has a unique extension 
 $\bar\rho: \mc{H}_{1, c, X, G}\rightarrow j_{i*}^H\coor{\pi}_*\OO_{\textrm{flat}}(\coor{\mc{N}}\times\mc{A}_{n-l, l}^H)$ 
to $X_i^H$ in the $G$-equivariant topology of $X$.
\end{lemma}
\begin{proof}
Let $W_x$ be $K$-invariant linear slice centered on a stratum $X_K^j$ contained in $X_i^H$ as above. The image of Morphism \eqref{compofres} is contained in the image of the surjective map 
\begin{align*}
&H_{1, c}(W_x\setminus X_K^j, H)\xrightarrow{\cong}\invlim_{W_y, y\in X_L\cap W_x, L< H}H_{1, c}(\ind_L^HW_y, H)\twoheadrightarrow\{(s_y)\in\prod_{y\in W_{x, H}^i} H_{1, c}(W_y, H):~\res_{W_{y'}}^{W_y}(s_y)=s_{y'}\}\nonumber\\
&\rightarrow\fl(W_{x, H}^i).
\end{align*}
Hence, the preimage of every section $\hat{s}$ in the image of \eqref{compofres} is non-empty in $H_{1, c}(W_x\setminus X_K^j, H)$. Furthermore, as the codimension of $X_K^j$ is at least $2$ in $X$, by Hartog's Theorem, it follows that $H_{1, c}(W_x\setminus X_K^j, H)\cong H_{1, c}(W_x, H)$. Hence, by gluing condition \eqref{glcontwo}, each representative of the preimage of $\hat{s}$ in $H_{1, c}(W_x, H)$ determines a unique section $\hat{s}_1$ of $j_{i*}^H\coor{\pi}_*\OO_{\textrm{flat}}(\coor{\mc{N}}\times\mc{A}_{n-l, l}^H)(W_x)$ such that $\hat{s}_1|_{W_x\setminus X_K^j}=\hat{s}$. By the identity theorem, all sections $\hat{s}_1$  coincide on the open subset $W_{x, H}^i$ of $W_x\cap X_i^H$, hence, on $W_x\cap X_i^H$. This gives a well-defined extension $\bar\rho $.
%
\end{proof}
Lemma \ref{extcollapsingmap} induces a morphism between Hochschild chain complexes of sheaves 
\begin{equation}
\label{morphismofhhchaincomplexes}
\mc{C}_{\bullet}(\mc{H}_{1, c, X, G})\longrightarrow\mc{C}_{\bullet}(j_{i*}^H\coor{\pi}_*\OO_{\textrm{flat}}(\coor{\mc{N}}\times\mc{A}_{n-l, l}^H)) 
\end{equation}
in the $G$-equivariant topology of $X$ for every nontrivial parabolic subgroup $H$ of $G$. Let $\nabla^{\infty}$ be the $\GL_{n-l}(\bb C)\times Z$-equivariant smooth flat connection of the underlying smooth complex bundle of the  holomorphic  bundle $\coor{\mc{N}}\times\HC$ which is compatible with the holomorphic flat connection $\nabla_H^i$ defined in Section \ref{formalgeomconstrofcherednik}. Then, ~the sheaf $\efl$ is isomorphic to the sheaf $\sfl$ of flat smooth sections with respect to $\nabla^{\infty}$. By formal geometry, the fiber of $\coor{\mc{N}}/(\GL_{n-l}(\bb C)\times Z)$ is contractible. Hence, there is a global smooth section $\varphi: X_i^H\rightarrow \coor{\mc{N}}/(\GL_{n-l}(\bb C)\times Z)$. Then, $\varphi^*\nabla^{\infty}$ is a flat smooth connection on $E:=F_{\textrm{ext}}(\N)\times_{\GL_{n-l}(\bb C)\times Z}\HC$ over $X_i^H$ where $F_{\textrm{ext}}(\N)$ denotes the smooth extended frame bundle of $\N$ as defined in Section $2.3$ in \cite{EF08}. Over trivializing sets $U$ on $X_i^H$, we also have $\varphi^*\nabla^{\infty}\big|_{U}=d+[\vartheta\big|_U,~\cdot~]$ where $\vartheta\big|_U=\varphi^*(\Phi_c\circ\omega)\big|_U\in\Omega^1(U, \HC)$. Every section of $\sfl$ determines a unique flat section of $E\to X_i^H$. Thus, locally, for every $H$-invariant linear slice $W_x$ in $X$ with $x\in X_i^H$, there is a composition of maps (see Section 3.2 in \cite{Ram11}) 
\begin{align}
\label{standardmap}
\C_p(\Gamma_{\textrm{flat}}((\coor{\pi})^{-1}(W_{x, H}^i),  &\coor{\mc{N}}\times\mc{A}_{n-l,l}^H))
\hookrightarrow\C_p\Big(\big(\Omega^{\bullet}(W_{x, H}^i, \mc{A}_{n-l, l}^H), d+[\vartheta|_{W_{x, H}^i}, \cdot]\big)\Big)\nonumber\\
&\to\C_p\Big(\big(\Omega^{\bullet}(W_{x, H}^i, \mc{A}_{n-l, l}^H), d\big)\Big)\to\big(\Omega^{2n-2l-p}(W_{x, H}^i), (-1)^{2n-2l-p}d\big)\nonumber\\&
\to(\Omega^{2n-2l-p}(W_{x, H}^i), d\big)
\end{align}
where~ we~ implicitly~ use~ the~ identification~ of~ differential-graded~ algebras~ $\big(\Omega^{\bullet}(W_{x, H}^i, E), \varphi^*\nabla^{\infty}\big)\cong\big(\Omega^{\bullet}(W_{x, H}^i, \mc{A}_{n-l, l}^H), d+[\vartheta|_{W_{x, H}^i}, \cdot]\big)$. Map \eqref{standardmap} is given by 
\begin{align*}
&(\hat{s}_0, \dots, \hat{s}_p)\mapsto
(\hat{s}_0, \dots, \hat{s}_p)\\
&\mapsto\sum_{k\geq0}(-1)^{k}(\hat{s}_0, \dots, \hat{s}_p)\times(\vartheta|_{W_{x, H}^i})^k\xmapsto{\psi_{2n-2l}(\cdot)}\sum_{k\geq0}(-1)^{k}\psi_{2n-2l}\big((\hat{s}_0, \dots, \hat{s}_p)\times(\vartheta|_{W_{x, H}^i})^k\big)\\
&\xmapsto{(-1)^{\lfloor\frac{2n-2l-p}{2}\rfloor}}
\sum_{k\geq0}(-1)^{\lfloor\frac{k}{2}\rfloor}\psi_{2n-2l}\big((\hat{s}_0, \dots, \hat{s}_p)\times(\vartheta|_{W_{x, H}^i})^k\big).
\end{align*}
Here, by abuse of notation, we denote by $\hat{s}$ a flat section of $\coor{\mc{N}}\times\mc{A}_{n-l, l}^H$ as well as the corresponding flat section of $E$. We denote by $(\hat{s}_0, \dots, \hat{s}_p)$ the normalized graded Hochschild $p$-chain and $\times$ is the shuffle product of Hochschild chains. We denote by $(\vartheta|_{W_{x, H}^i})^k$ the normalized graded Hochschild $k$-chain $(1, \vartheta|_{W_{x, H}^i}, \dots, \vartheta|_{W_{x, H}^i})$. The definition of Morphism \eqref{standardmap} is almost identical to morphism $(34)$ in \cite{RT12}. 
Similarly to \cite[Proposition $2.9$]{EF08} and the discussion in Section $4.2$ in \cite{RT12}, different choices of trivialization $(U_{\alpha}, \psi_{\alpha})$ of $TX_i^H\oplus\N$ change $\vartheta$ by an element in $\Omega^1(U_{\alpha\beta}, \mf{gl}_{n-l}(\bb C)\oplus\mf{z})$, $U_{\alpha\beta}:=U_{\alpha}\cap U_{\beta}\subset X_i^H$. However, this leaves the definition of Morphism \eqref{standardmap} unchanged due to the fact that by Proposition \ref{basiccocycles}, the cocycle $\psi_{2n-2l}$ is $\mf{gl}_{n-l}(\bb C)\oplus\mf{z}$-basic. Consequently, the map \eqref{standardmap} extends to the level of sheaves and combined with Morphism  \eqref{morphismofhhchaincomplexes} gives rise to the ensuing cochain map of complexes of sheaves in the $G$-equivariant topology of $X$:
\begin{equation}
\label{tdmgequivtop}
\chi_{i}^H: \mc{C}_{\bullet}(\mc{H}_{1, c, X, G})\longrightarrow j_{i*}^H\Omega_{X_i^H}^{2n-2l-\bullet}.
\end{equation}
Here, the Hochschild chain complex is turned into a cochain complex by inverting the homological degrees. Morphism \eqref{tdmgequivtop} is equivalent to the following map of cochain complexes of sheaves on $Y$: 
\begin{equation}
\label{tdm}
\chi_{i}^H: \mc{C}_{\bullet}(\mc{H}_{1, c, X, G})\longrightarrow p_*j_{i*}^H\Omega_{X_i^H}^{2n-2l-\bullet}.
\end{equation}
We refer to Morphism \eqref{tdm} as a \emph{trace density map}. If $G=\{\id_G\}$, the cocycle $\psi_{2n-2l}$ reduces to Feigin-Felder-Shoikhet's cocycle $\tau_{2n}$ from Section $2.1$ in \cite{EF08} (\emph{cf}. \cite{FFS05}). This way, the morphism \eqref{tdm} (equivalently, the map \eqref{tdmgequivtop}) reduces to Engeli-Felder's map $\chi: \mc{C}_{\bullet}(\mc{D}_X)\to\Omega_X^{2n-\bullet}$ (see Morphism $(2)$ in \cite{EF08}). 

In much the same way as the construction of Morphism \eqref{tdmgequivtop}, we obtain a map  for the Hochschild chain complex of the sheaf of formal Cherednik algebras in the $G$-equivariant topology on $X$:
\begin{equation}
\label{formaltdmgequivtop}
\chi_{i, \hbar}^H: \mc{C}_{\bullet}(\mc{H}_{1, \hbar, X, G})\longrightarrow j_{i*}^H\Omega_{X_i^H}^{2n-2l-\bullet}\llbracket\hbar\rrbracket.
\end{equation}
Morphism \eqref{formaltdmgequivtop} is naturally equivalent to the map of complexes of sheaves
\begin{equation}
\label{formaltdm}
\chi_{i, \hbar}^H: \mc{C}_{\bullet}(\mc{H}_{1, \hbar, X, G})\longrightarrow p_*j_{i*}^H\Omega_{X_i^H}^{2n-2l-\bullet}\llbracket\hbar\rrbracket 
\end{equation}
on $Y$ which is referred to as \emph{a formal trace density map}. With the help of the formal  trace density maps \eqref{formaltracedensity} and \eqref{formaltdm}, we establish the following isomorphism in the derived category ${\bf D}(\bb C_Y\llbracket\hbar\rrbracket)$ of $\bb C_Y\llbracket\hbar\rrbracket$-modules.   
\begin{theorem}
\label{quasiiso}
The map of cochain complexes
\begin{equation}
\label{formalhypercohomology}
\bigoplus_{\substack{i\\g\in G}}\chi_{i, \hbar}^g: \mc{C}_{\bullet}(\mc{H}_{1, \hbar, X, G})\rightarrow\big(\bigoplus_{\substack{i\\g\in G}}p_*j_{i*}^g\Omega_{X_{i}^g}^{2n-2l_g^i-\bullet}\llbracket\hbar\rrbracket\big)^G,
\end{equation}
where $l_g^i=\codim (X_i^g)$, is an isomorphism in ${\bf D}(\bb C_{Y}\llbracket\hbar\rrbracket)$. 
\end{theorem}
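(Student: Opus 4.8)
The plan is to deduce the statement from a purely local computation, namely the Engeli-Felder/Feigin-Felder-Schoikhet trace-density quasi-isomorphism transported to the Harish-Chandra modules $\mc{A}_{n-l, l}^{H, \lau{\hbar}}$. Since \eqref{formalhypercohomology} is a morphism of complexes of sheaves in the $G$-equivariant topology, it is a quasi-isomorphism exactly when it induces isomorphisms on all cohomology sheaves, and this condition may be tested on stalks. It therefore suffices to verify the claim over the basic open sets $\ind_H^G W_x$ of $\mc{B}_X^G$ with $x \in X_H^i$ and $\Stab(x) = H$. Over such a set the gluing Theorem \ref{maingluingresult} together with the extended collapsing map \eqref{extcollapsingmap} identifies the local sections of $\mc{H}_{1, \lau{\hbar}, X, G}$ with flat sections of the bundle attached to $\mc{A}_{n-l, l}^{H, \lau{\hbar}}$ on $X_i^H$, so that \eqref{morphismofhhchaincomplexes} becomes the induced map on Hochschild complexes of flat sections and the local trace density \eqref{standardmap} is precisely the descent map built from the flat connection $\varphi^{*}\nabla^{\infty}$ and the reduced basic cocycle $\psi_{2n-2l}^{\hbar} = \tau_{2n-2l}^{\hbar}\cdot\phi^{\hbar}$.

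The core step is to show that \eqref{standardmap} is a local quasi-isomorphism, and this is the main obstacle. Following \cite{EF08} and \cite{RT12}, I would argue by Gelfand-Kazhdan descent: because the fiber of $\coor{\mc{N}}/(\GL_{n-l}(\bb C)\times Z)$ is contractible, the Hochschild complex of flat sections is quasi-isomorphic, via a Fedosov-type resolution, to the de Rham complex on $X_i^H$ with coefficients in the fiberwise Hochschild homology of $\mc{A}_{n-l, l}^{H, \lau{\hbar}}$. By Corollary \ref{hhchofhcmodule} this fiberwise homology equals $\bb K^{a(H)_{\bullet - 2n + 2l}}$, while Proposition \ref{basiccocycles} ensures that $\psi_{2n-2l}^{\hbar}$, being $(\mf{gl}_{n-l}(\bb C)\oplus\mf{z})\otimes\bb K$-basic, descends to $X_i^H$ and represents the distinguished generator; hence the map onto the shifted forms $\Omega_{X_i^H}^{2n-2l-\bullet}\lau{\hbar}$ is an isomorphism on cohomology sheaves. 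This is exactly the point where the analytic machinery of \cite{EF08} and \cite{FFS05} must be carried over to the group-equivariant Cherednik setting, the nondegenerate pairing supplied by the cocycle being what realizes the identification.

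Finally I would assemble the twisted sectors and globalize. By the Anan'in-Farinati-Lauve-Solotar count of Theorem \ref{hhcher}, the integer $a(H)_j$ decomposes over conjugacy classes $\gamma\in\Conj(H)$ whose eigenvalue-$1$ multiplicity on the normal space $\bb C^l\oplus\bb C^{l*}$ equals $j = 2k_{\gamma}$; each such $\gamma$ singles out the component $X_i^{\gamma}$ of its fixed locus through $x$, of codimension $l_{\gamma}^i = l - k_{\gamma}$, and the degree bookkeeping $2n - 2l + j = 2n - 2l_{\gamma}^i$ matches the fiberwise homology in degree $m = 2n - 2l_{\gamma}^i$ with the term $j_{i*}^{\gamma}\Omega_{X_i^{\gamma}}^{2n - 2l_{\gamma}^i - \bullet}\lau{\hbar}$ of the right-hand side, whose cohomology sheaf is the constant sheaf $\bb K$ in exactly that degree by the Poincaré lemma. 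Since the basicness of $\psi_{2n-2l}^{\hbar}$ makes the local maps independent of the trivialization of $N$ and compatible with restriction, they glue to a global morphism; passing to the $G$-equivariant topology records the orbifold quotient, so that the $G$-action permutes the sectors $g \mapsto hgh^{-1}$ and the fixed components, matching the $\Conj(H)$-indexing of the local homology with the invariants $(\cdot)^G$ on the target. Collating the stalkwise isomorphisms then yields the asserted quasi-isomorphism.
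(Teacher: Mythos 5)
Your overall skeleton---reduction to the basic opens $\ind_H^GW_x$, identification of both sides through the conjugacy-class decomposition of $a(H)_j$, and the use of basicness to descend the cocycle---matches the paper's strategy. But the central assertion of your second paragraph is a genuine gap: you claim that Proposition \ref{basiccocycles} ensures that $\psi_{2n-2l}^{\hbar}$ \emph{represents the distinguished generator}. Basicness only guarantees that the cocycle descends, i.e.\ that the map \eqref{standardmap} is well defined independently of the trivialization of $N$; it says nothing about whether the induced map on homology is non-zero, let alone invertible. A priori the trace density could annihilate some or all of the homology classes, in which case you would have a morphism between two complexes with abstractly isomorphic cohomology sheaves that is nevertheless not a quasi-isomorphism. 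This non-degeneracy is precisely what the paper has to work for: it exhibits the explicit generators $(c_{2n-2l_h})_{C_H(h)\in\Conj(H)}$ of $\hh_{\bullet}(H_{1, \lau{\hbar}}(\ind_H^GW_x, G))$ coming from \cite{FT10}, decomposes the trace $\phi^{\hbar}$ on $\widehat{H}_{1, \lau{\hbar}}(\bb C^{l_h}, \langle h\rangle)$ as a linear combination $\sum_k\lambda_k\tr_{h^k}(\cdot)$ of Fedosov twisted traces, and then evaluates $\psi_{2n-2l_h}$ on $c_{2n-2l_h}$ to obtain $\sum_k\lambda_k\tau_{2n-2l_h}(c_{2n-2l_h})\tr_{h^k}(1)$, which is non-zero because $\tau_{2n-2l_h}^{\hbar}(c_{2n-2l_h})\neq0$ by \cite{FFS05, FT10} and $\tr_{h^k}(1)\neq0$ for at least $k=1$. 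Only after this evaluation does the map become a direct sum of invertible one-dimensional matrices between generators. Your proposal needs this computation, or an equivalent non-degeneracy statement, to close.

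A secondary remark: the paper does not actually run the Fedosov-resolution/Gelfand--Kazhdan descent spectral sequence you sketch for the local quasi-isomorphism. Instead it computes the Hochschild homology of the left-hand side directly on basic opens by transporting to the linear model $\ind_H^G\bb C^n$ via the equivariant biholomorphism $F$, invoking Proposition \ref{prophh}, \cite{DE05} and Frobenius reciprocity, and then compares explicit generators on both sides. Your descent route could in principle be made rigorous, but it carries exactly the same burden of pairing the cocycle non-trivially against the fiberwise homology generator, so it does not circumvent the missing step.
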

\begin{proof}
It is enough to show that the map \eqref{formalhypercohomology} induces an isomorphism at the level of cohomology sheaves. 

For each group $g\in G$, the fiber $\bb C^l$ of the normal bundle to $X^g$ does not contain the trivial representation of $\langle g\rangle$. 
Hence, there are nontrivial linear functionals on $\hh_0(\widehat{H}_{1, \hbar}(\bb C^l, \langle g\rangle))$.  Hence, the maps $\chi_{i, \hbar}^g$ are non-zero. As per definition of the basis $\mc{B}_X^G$, for every $x\in X_i^H$, there is a contractible $H$-invariant slice $W_x$ in $X$, an $H$-invariant contractible set $V$ in the product topology of $\bb C^n$, $n=\dim X$, containing the origin of $\bb C^n$, and an $H$-equivariant biholomorphism $f: W_x\rightarrow V$ with $f(x)=0$. The differential of $f$ equips $\bb C^n$ with the structure of an $H$-representation. Similarly, each $\bb C_{[g]}^n:=[g, \bb C^n]\subset G\times_H\bb C^n$ becomes a $gHg^{-1}$-space. As a result, the induction set $\ind_H^G\bb C^n=G\times_H\bb C^n=\coprod_{g\in G/H}\bb C_{[g]}^n$ acquires a natural right $H$-action. Moreover, each translate $gW_x$ possesses a $gHg^{-1}$-equivariant biholomorphism  $f_{[g]}$ from $gW_x$ to an open set $V_{[g]}$ in $\bb C_{[g]}^n$,  given by $f_{[g]}(y)=[g, f(g^{-1}y)]$ for every $y\in gW_x$. Hence, there is a $G$-equivariant biholomorphism $F$ from $\ind_H^GW_x$ to $\ind_H^GV$ given by $F(y)= f_{[g]}(y)$ for every $y\in gW_x$ and every $g\in G/H$. We prove the equivalent statement of Theorem \ref{quasiiso} in the $G$-equivariant topology on $X$. To that aim, it suffices to show that the homology presheaves on both sides of \eqref{formalhypercohomology} are isomorphic in the basis $\mc{B}_X^G$ of the $G$-equivariant topology. By an identical argumentation as in Proposition \ref{hhaj}, \emph{i)}, we have for the homology presheaf on every $\ind_H^GW_x\in\mc{B}_X^G$ on the left hand side of Equation \eqref{formalhypercohomology} that 
\begin{equation}
\label{genhhaj}
\hh_{\bullet}(H_{1, \hbar}(\ind_H^GW_x, G))\cong\hh_{\bullet}(\mc{D}(\ind_H^GW_x)\rtimes G)\llbracket\hbar\rrbracket.
\end{equation} 
Then, the $G$-equivariant biholomorphism $F$ and Remark \ref{hhofholdiffops}, $ii)$ imply that 
\begin{equation}
\label{Fandpropsandwich}
\hh_{\bullet}(\mc{D}(\ind_H^GW_x)\rtimes G)\llbracket\hbar\rrbracket\cong\hh_p(\mc{D}(\ind_H^GV)\rtimes G)\llbracket\hbar\rrbracket\cong\hh_{\bullet}(\mc{D}_{\textrm{alg.}}(\ind_H^G\bb C^n)\rtimes G)\llbracket\hbar\rrbracket.
\end{equation}
On the other hand, denote by $Z_H(h)$ and $C_H(h)$ the centralizer of an element $h$ in $H$, respectively its conjugacy class in $H$ and by $\h_{\textrm{dR}}$ the algebraic de Rham cohomology. Let $l_h:=\codim(\bb C)^h$. Then, with Frobenius' reciprocity theorem we simplify 
\begin{align}
\label{isobetweenhhandderham}
\hh_{\bullet}(\mc{D}_{\textrm{alg.}}(\ind_H^G\bb C^n)\rtimes G)\llbracket\hbar\rrbracket
&\cong\big(\oplus_{\substack{i\\g\in G}}\h^{2n-2\codim(\ind_H^G\bb C^n)_i^g-\bullet}((\ind_H^G\bb C^n)_i^g, \bb C)\llbracket\hbar\rrbracket\big)^G\nonumber\\
&\cong\big(\oplus_{g\in G/H}\oplus_{k\in gHg^{-1}}\h^{2n-2\codim(\bb C_{[g]}^n)^k-\bullet}((\bb C_{[g]}^n)^k, \bb C)\llbracket\hbar\rrbracket\big)^G\nonumber\\
&\cong\big(\bb CG\otimes_{\bb CH}\oplus_{k\in H}\h_{\textrm{dR}}^{2n-2\codim(\bb C^n)^k-\bullet}((\bb C^n)^k, \bb C)\llbracket\hbar\rrbracket\big)^G\nonumber\\
&\cong\big(\oplus_{k\in H}\h_{\textrm{dR}}^{2n-2l_k-\bullet}((\bb C^n)^k, \bb C)\llbracket\hbar\rrbracket\big)^H\nonumber\\
&\cong\oplus_{C_H(h)\in\Conj(H)}\big(\oplus_{k\in C_H(h)}\h_{\textrm{dR}}^{2n-2l_k-\bullet}((\bb C^n)^k, \bb C)\llbracket\hbar\rrbracket\big)^H\nonumber\\
&\cong\oplus_{C_H(h)\in\Conj(H)}\big(\bb CH\otimes_{\bb CZ_{H}(h)}\h_{\textrm{dR}}^{2n-2l_h-\bullet}((\bb C^n)^h, \bb C)\llbracket\hbar\rrbracket\big)^H\nonumber\\
&\cong\oplus_{C_H(h)\in\Conj(H)}\big(\h_{\textrm{dR}}^{2n-2l_h-\bullet}((\bb C^n)^h, \bb C)\llbracket\hbar\rrbracket\big)^{Z_H(h)}\nonumber\\
&\cong\oplus_{C_H(h)\in\Conj(H)}\big(\hh_{\bullet}(\mc{D}_{\textrm{alg.}}(\bb C^n), \mc{D}_{\textrm{alg.}}(\bb C^n)h)\llbracket\hbar\rrbracket\big)^{Z_H(h)}
\end{align}
where $i$ in the first line denotes the connected components, the first isomorphism follows directly from \cite[Proposition 3 and Proposition 4]{DE05}. From \cite{FT10}, we know that the homology $\hh_{\bullet}(\mc{D}_{\textrm{alg.}}(\bb C^n), \mc{D}_{\textrm{alg.}}(\bb C^n)h)$ is one-dimensional, spanned by the Hochschild  $(2n-2l_h)$-cycle 
\begin{equation*}
c_{2n-2l_h}=\sum_{\sigma\in S_{2n-2l_h}}1\otimes u_{\sigma(1)}\otimes \dots\otimes u_{\sigma(2n-2l_h)}  
\end{equation*}
where $u_{2i-1}=\partial_{x_{2i-1}}$, $u_{2i}=x_{2i}$. Hence, from the isomorphisms \eqref{genhhaj}, \eqref{Fandpropsandwich} and \eqref{isobetweenhhandderham} we conclude that $\hh_{\bullet}(H_{1, \hbar}(\ind_H^GW_x, G))$ is spanned by the vector $(c_{2n-2l_h})_{C_H(h)\in\Conj(H)}$ over $\bb C\llbracket\hbar\rrbracket$. On the other hand, there is a natural isomorphism
\begin{align*}
\hh^0(H_{1, \hbar}(\bb C^{l_h}, \langle h\rangle), H_{1, \hbar}(\bb C^{l_h}, \langle h\rangle)^*)
&\cong\big(\oplus_{k=1}^{\ord(h)}\hh^0(\mc{D}_{\textrm{alg.}}(\bb C^{l_h}), \mc{D}_{\textrm{alg.}}(\bb C^{l_h})h^{k*})\llbracket\hbar\rrbracket\big)^{\langle h\rangle}
\end{align*}
where $\ord(h)$ is the order of $h$ in $G$. Each group $\hh^0(\mc{D}_{\textrm{alg.}}(\bb C^{l_h}), \mc{D}_{\textrm{alg.}}(\bb C^{l_h})h^{k*})$ is spanned by an $h^k$-twisted trace $\tr_{h^k}(\cdot)$, defined in \cite{Fed00}. It can be uniquely extended to a $\bb C\llbracket\hbar\rrbracket$-linear trace on $\hh^0(\mc{D}_{\textrm{alg.}}(\bb C^{l_h}), \mc{D}_{\textrm{alg.}}(\bb C^{l_h})h^{k*})\llbracket\hbar\rrbracket$. Hence, for each trace $\phi^{\hbar}$ of $H_{1, \hbar}(\bb C^l, \langle h\rangle)$, we can make the identification
\begin{equation}
\label{traceidentif}
\phi^{\hbar}=\sum_{k=1}^{\ord(h)}\lambda_k\tr_{h^k}(\cdot).
\end{equation}
For $g\in H$, set $W_{x, g}^i:=W_x\cap X_i^g$. Evaluation of the right hand side of \eqref{formalhypercohomology} yields 
\begin{align*}
&\big(\bigoplus_{\substack{i\\g\in G}}j_{i*}^g\Omega_{X_i^g}^{2n-2l_g^i-\bullet}\llbracket\hbar\rrbracket(\ind_H^GW_x)\big)^G\cong\bigoplus_{C_H(h)}(\bigoplus_{g\in C_H(h)}\Omega_{X_i^g}^{2n-2l_g^i-\bullet}\llbracket\hbar\rrbracket(W_{x, g}^i))^G\\
&\cong\bigoplus_{C_H(h)}(\Omega_{X_i^h}^{2n-2l_h^i-\bullet}\llbracket\hbar\rrbracket(W_{x, h}^i)^{Z_H(h)}.
\end{align*}
The first isomorphism follows from the fact that by Cartan's Lemma, the $H$-invariant slice $W_x$ intersects at most one connected component of each $X^g$ for which $\langle g\rangle\leq H$.  
The cohomology of the right hand side of \eqref{formalhypercohomology} is isomorphic to
\[\bigoplus_{C_H(h)}\big(\h^{2n-2l_h^i-\bullet}(W_{x, h}^i, \bb C)\llbracket\hbar\rrbracket\big)^{Z_G(h)}\]
Plugging the generator of $\hh_{\bullet}(H_{1, \hbar}(\ind_H^GW_x, G))$ into Morphism \eqref{formalhypercohomology} yields
\begin{align*}
\bigoplus_{\substack{i\\g\in G}}\chi_{i, \hbar}^g((c_{2n-2l_h})_{C_H(h)\in\Conj(H)})&=\bigoplus_{C_H(h)\in\Conj(H)}\chi_{i, \hbar}^h((c_{2n-2l_h})_{C_H(h)\in\Conj(H)})\nonumber\\
&=\big((-1)^{\floor{\frac{2n-2l_h}{2}}}\psi_{2n-2l_h}^{\hbar}(c_{2n-2l_h})\big)_{C_H(h)\in\Conj(H)}\nonumber\\
&=\big(\sum_{k=1}^{\ord(h)}(-1)^{\floor{\frac{2n-2l_h}{2}}}\lambda_k\tau_{2n-2l_h}^{\hbar}(c_{2n-2l_h})\tr_{h^k}(1)\big)_{C_H(h)\in\Conj(H)}
\end{align*}
where in the second line we used Identification \eqref{traceidentif}. The closing argument is analogous to the closing argument in the proof of \cite[Proposition 2.3]{EF08}. As $
\tau_{2n-2l_h}^{\hbar}(c_{2n-2l_h})$ is nonzero by \cite{FFS05, FT10} and $\tr_{h^k}(1)$ is non-zero for at least $k=1$, the map \eqref{formalhypercohomology} is a direct sum of rank $1$ invertible  matrices between the generators of the homology of the global Cherednik algebra on basic open sets and the generators of $\bigoplus_{C_H(h)\in\Conj(H)}\big(\h^{2n-2l_h^i-\bullet}(W_{x, h}^i, \bb C)\llbracket\hbar\rrbracket\big)^{Z_H(h)}$. Hence, the map $\bigoplus_{\substack{i\\g\in G}}\chi_{i, \hbar*}^g$ is an isomorphism of $\bb C_{Y}\llbracket\hbar\rrbracket$-modules.  
\end{proof}
\begin{remark}
In \cite{Vit20a}, we show that in the case when $\hbar=0$, the isomorphism \eqref{formalhypercohomology} in the derived category ${\bf D}(\bb C_Y)$ does not stem simply from a zig-zag map but from an actual quasi-isomorphism in the category of complexes of $\bb C_Y$-modules. 
\end{remark}
Let $\bb H^{\bullet}$ denote the  hypercohomology functor and let $\h_{CR}^{\bullet}(Y, \bb C)$ be the Chen-Ruan cohomology of $Y$ with complex coefficients.  We arrive at the following important direct consequence of Theorem \ref{quasiiso}. 
\begin{corollary}
\label{hypercohchenruan}
There is an isomorphism of $\bb C\llbracket\hbar\rrbracket$-modules $\bb H^{-\bullet}(Y,  \mc{C}_{\bullet}(\mc{H}_{1, \hbar, X, G}))\rightarrow\h_{CR}^{2n-\bullet}(Y, \bb C)\llbracket\hbar\rrbracket$.
\end{corollary}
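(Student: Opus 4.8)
The plan is to deduce the statement formally from Theorem \ref{quasiiso} by applying hypercohomology and then recognizing the cohomology of the target as Chen-Ruan cohomology. First, since $\bigoplus_{i,g}\chi_{i,\lau{\hbar}}^g$ of \eqref{formalhypercohomology} is a quasi-isomorphism of complexes of sheaves on $X$ in the $G$-equivariant topology, it induces an isomorphism on hypercohomology
\[
\bb H^{-\bullet}(X/G, \mc{C}_{\bullet}(\mc{H}_{1,\lau{\hbar},X,G}))\xrightarrow{\ \sim\ }\bb H^{-\bullet}\Big(X/G, \big(\bigoplus_{\substack{i\\ g\in G}} j_{i*}^g\Omega_{X_i^g}^{2n-2l_g^i-\bullet}\lau{\hbar}\big)^G\Big).
\]
Because $G$ is finite and we are in characteristic zero, the functor $(-)^G$ is exact and commutes with hypercohomology, so it suffices to compute $\bb H^{-\bullet}$ of $\bigoplus_{i,g} j_{i*}^g\Omega_{X_i^g}^{2n-2l_g^i-\bullet}\lau{\hbar}$ over $X$ and then pass to $G$-invariants.

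Second, I would compute the hypercohomology of each summand via the smooth de Rham theorem. Each $\Omega_{X_i^g}^{\bullet}$ is a fine (hence soft) resolution of the constant sheaf $\underline{\bb C}$ by the Poincar\'e lemma, and the inclusion $j_i^g\colon X_i^g\hookrightarrow X$ is a closed embedding, so $j_{i*}^g$ is exact and preserves softness, giving $\bb H^{\bullet}(X, j_{i*}^g\mc{F})\cong\bb H^{\bullet}(X_i^g,\mc{F})$. Tracking the indexing — the trace density map sends chain degree $p$ to form degree $2n-2l_g^i-p$, so in cohomological degree $-p$ the target sheaf is $\Omega_{X_i^g}^{2n-2l_g^i-p}$, i.e. the de Rham complex shifted by $2n-2l_g^i$ — one obtains
\[
\bb H^{-\bullet}\big(X, j_{i*}^g\Omega_{X_i^g}^{2n-2l_g^i-\bullet}\lau{\hbar}\big)\cong \h^{2n-2l_g^i-\bullet}(X_i^g, \bb K).
\]

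Third, I would reorganize the $G$-invariants of the resulting direct sum into the twisted-sector form of Chen-Ruan cohomology. Grouping summands by conjugacy classes and reusing the orbit-stabilizer / Frobenius-reciprocity chain already carried out in the proof of Theorem \ref{quasiiso} (the isomorphisms in \eqref{isobetweenhhandderham}), the $G$-invariants of the sum over a $G$-orbit of group elements collapse to the $Z_G(g)$-invariants attached to a single representative:
\begin{align*}
\Big(\bigoplus_{\substack{i\\ g\in G}}\h^{2n-2l_g^i-\bullet}(X_i^g,\bb K)\Big)^G
&\cong\bigoplus_{[g]\in\Conj(G)}\Big(\bigoplus_{i}\h^{2n-2l_g^i-\bullet}(X_i^g,\bb K)\Big)^{Z_G(g)}.
\end{align*}
This sum over the components of the inertia orbifold is precisely the underlying $\bb K$-vector space of $\h_{CR}^{2n-\bullet}(X/G,\bb K)$, which yields the claimed isomorphism.

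I expect the only nonroutine point to be the grading bookkeeping in this last identification. The trace density construction produces a shift by twice the complex codimension $l_g^i$ of each fixed-locus component, whereas the standard Chen-Ruan grading shifts the $[g]$-sector by twice the degree-shifting number (age) $\iota(X_i^g)=\sum_j a_j$ with $a_j\in(0,1)$ the arguments of the eigenvalues of $g$ on the normal bundle. Since $0<\iota(X_i^g)<l_g^i$ in general, these two regradings disagree, so the identification with $\h_{CR}^{2n-\bullet}(X/G,\bb K)$ is asserted as an isomorphism of $\bb K$-vector spaces, in accordance with Corollary B; reconciling the codimension shift intrinsic to the trace density map with the age grading of Chen-Ruan cohomology — which is consistent with the author's own local computation in \eqref{isobetweenhhandderham}, where the generator $c_{2n-2l_h}$ sits in Hochschild degree $2n-2l_h$ — is the step requiring care, while everything else follows formally from the quasi-isomorphism and the de Rham theorem.
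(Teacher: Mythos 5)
Your proposal is correct and is exactly the argument the paper intends: the paper offers no written proof, stating only that the corollary follows from the quasi-isomorphism of Theorem \ref{quasiiso}, and your three steps (pass to hypercohomology, apply the de Rham theorem to each soft complex $j_{i*}^g\Omega_{X_i^g}^{2n-2l_g^i-\bullet}\lau{\hbar}$, regroup the $G$-invariants by conjugacy classes into twisted sectors) are the standard way to fill that in, consistent with the author's own use of $\h_{\textrm{CR}}^{2n-k}(X/G,\bb C)\cong\bigoplus_{g,i}\h_{\textrm{dR}}^{2n-2l_g^i-k}(X_i^g)$ in the proof of Corollary \ref{eulerchr}. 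Your closing caveat about the codimension shift versus the Chen-Ruan age grading is well taken and correctly resolved by noting that the statement is only an isomorphism of $\bb K$-vector spaces.
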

The topological Euler characteristic $\chi(Y)$ of the global quotient orbifold $Y$ is defined \cite{HH90} as the average of  Euler characteristics of connected components of fixed point submanifolds $X^g$ in $X$:
\[\chi(Y):=\frac{1}{|G|}\sum_{g\in G}\sum_i\chi(X_i^g)\] 
where $X_i^g$ is the $i$-th connected component of $X^g$ with codimension $l_g^i$ and $i$ goes through the corresponding index set of connected components of $X^g$. On the other hand, the Euler characteristic $\chi(Y, \mc{C}_{\bullet}(\mc{D}_X\rtimes G))$ of the global quotient orbifold $Y$ with values in the cochain complex $\mc{C}^{\bullet}$ is given \cite{Dim04} by
\[\chi(Y, \mc{C}^{\bullet}):=\sum_{k\geq0}(-1)^k\dim\bb H^k(Y, \mc{C}^{\bullet}).\]
Corollary \ref{hypercohchenruan} establishes a link between the Euler characteristic of $Y$ with values in the Hochschild chain complex of $\mc{D}_X\rtimes G$ and the Euler characteristic of $Y$ which is a topological invariant of the orbifold. 
\begin{corollary}
\label{eulerchr}
The Euler characteristic  
$\chi(Y, \mc{C}_{\bullet}(\mc{D}_X\rtimes G))$ is invariant under formal deformations of $\mc{D}_X\rtimes G$ and continuous deformations of $Y$. In particular, $\chi(Y, \mc{C}_{\bullet}(\mc{D}_X\rtimes G))=|G|\cdot\chi(Y)$.
\end{corollary}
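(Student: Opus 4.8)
The plan is to derive both assertions from the identification of hypercohomology with Chen--Ruan cohomology supplied by Corollary \ref{hypercohchenruan}, and then to extract the numerical value by a Lefschetz fixed-point computation.

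I would first dispose of the invariance claims. The quasi-isomorphism of Theorem \ref{quasiiso} is manufactured uniformly in the coupling $c$ and the formal parameter $\hbar$; the specialization $\hbar=0$ (equivalently $c=0$) returns $\mc{D}_X\rtimes G$, while a generic choice yields the nontrivial deformation $\mc{H}_{1, \lau{\hbar}, X, G}$. In every case Corollary \ref{hypercohchenruan} gives
\[
\bb{H}^{-\bullet}(X/G, \mc{C}_{\bullet}(-))\cong\h_{CR}^{2n-\bullet}(X/G, \bb{K}),
\]
and since $\h_{CR}^{\bullet}(X/G, \bb{K})\cong\h_{CR}^{\bullet}(X/G, \bb{C})\otimes_{\bb{C}}\bb{K}$ is a purely topological invariant of the orbifold, the right-hand side depends neither on $\hbar$ nor on which deformation of $\mc{D}_X\rtimes G$ is chosen. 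As the graded $\bb{K}$-dimensions on the left equal the graded $\bb{C}$-dimensions of the corresponding Chen--Ruan groups, the alternating sum defining $\chi(X/G, \mc{C}_{\bullet}(\mc{D}_X\rtimes G))$ is the same for all deformations; invariance under continuous deformations of $X/G$ is then immediate from the homotopy invariance of $\h_{CR}^{\bullet}$.

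It remains to compute the number $\chi(X/G, \mc{C}_{\bullet}(\mc{D}_X\rtimes G))=\chi\big(\h_{CR}^{\bullet}(X/G, \bb{C})\big)$. By the local analysis in the proof of Theorem \ref{quasiiso}, the relevant complex is quasi-isomorphic to $\big(\bigoplus_{i, g}j_{i*}^g\Omega_{X_i^g}^{2n-2l_g^i-\bullet}\big)^G$, so the hypercohomology is $\big(\bigoplus_{i, g}\h^{2n-2l_g^i-\bullet}(X_i^g, \bb{C})\big)^G$. Every shift $2n-2l_g^i$ is even and reversal of the grading direction preserves an alternating sum, so each component enters with its ordinary topological Euler characteristic and a positive sign; the sign bookkeeping thus reduces the task to evaluating $\chi\big(\big(\bigoplus_{g}\h^{\bullet}(X^g, \bb{C})\big)^G\big)$, assuming $X$ compact so that all groups are finite-dimensional.

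Finally I would apply the Reynolds averaging projector $\tfrac{1}{|G|}\sum_{\gamma\in G}\gamma^{\ast}$ onto the $G$-invariants in combination with the Lefschetz fixed-point theorem: for finite-order $\gamma$ the graded trace of $\gamma^{\ast}$ on $\h^{\bullet}(X^g, \bb{C})$ equals $\chi\big(X^g\cap X^{\gamma}\big)$, so the alternating sum collapses to a sum over commuting pairs of elements. Reorganizing this double sum by conjugacy class, exactly as in the chain of identifications \eqref{isobetweenhhandderham}, and comparing with the defining formula $\chi(X/G)=\tfrac{1}{|G|}\sum_{g}\sum_i\chi(X_i^g)$ should produce the factor $|G|$. \emph{I expect this last step to be the main obstacle}: one must track the $G$-action that simultaneously permutes the connected components $X_i^g$ and conjugates the sector index $g$, and then verify that the $Z_G(g)$-invariants left over from the averaging reassemble precisely into $|G|$ times the topological Euler characteristic of the coarse quotient, rather than into the a priori distinct orbifold (stringy) Euler number to which the commuting-pairs sum is naturally attached.
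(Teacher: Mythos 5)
Your treatment of the two invariance claims coincides with the paper's: the quasi-isomorphism of Theorem \ref{quasiiso} persists at $\hbar=0$, so the graded dimensions of $\bb H^{-\bullet}(X/G, \mc{C}_{\bullet}(-))$ agree for $\mc{D}_X\rtimes G$ and for its formal deformation, and topological invariance follows from Corollary \ref{hypercohchenruan}. That part is fine and essentially identical to the paper.

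The obstacle you flag at the end, however, is not a loose end to be tidied up later --- it is a genuine gap, and your own setup shows it cannot be closed in the direction you hope. If you keep the $G$-invariants that Theorem \ref{quasiiso} actually delivers, the Reynolds--Lefschetz computation you describe evaluates to
\[
\chi\Big(\big(\textstyle\bigoplus_{i, g}\h^{2n-2l_g^i-\bullet}(X_i^g, \bb C)\big)^G\Big)
=\frac{1}{|G|}\sum_{\substack{g, \gamma\in G\\ g\gamma=\gamma g}}\chi(X^g\cap X^{\gamma}),
\]
the stringy Euler number, and this is in general \emph{not} equal to $|G|\cdot\chi(X/G)=\sum_{g}\sum_i\chi(X_i^g)$. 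Already for a faithful linear action of a nonabelian $G$ on $X=\bb C^n$ every $X^g$ is contractible, the invariant part of $\bigoplus_{g}\h^{\bullet}(X^g)$ is concentrated in even degrees with total dimension $|\Conj(G)|$ (consistent with Theorem \ref{hhcher}), whereas $|G|\cdot\chi(X/G)=|G|$; for $G=S_3$ acting on $\bb C^2$ this is $3$ versus $6$. So the commuting-pairs sum does not reassemble into $|G|$ times the Euler characteristic of the coarse quotient, and the proposal cannot be completed as written. For comparison, the paper's own proof never encounters this obstruction because in passing from $\dim_{\bb C}\h_{\textrm{CR}}^{2n-k}(X/G, \bb C)$ to $\sum_{g\in G}\sum_i\dim\h_{\textrm{dR}}^{2n-2l_g^i-k}(X_i^g)$ it sums over all $g\in G$ \emph{without} taking $G$-invariants; your more careful bookkeeping isolates exactly the step on which the asserted factor $|G|$ hinges, and that step is precisely what would need independent justification. (A further, smaller point: your Lefschetz argument requires compactness of $X$ for the fixed-point formula and for finite-dimensionality of the cohomology groups, an assumption the corollary does not impose.)
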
 
\begin{proof}
The invariance under formal deformations follow from the fact that when $\hbar=0$, the isomorphism \eqref{formalhypercohomology}  remains an isomorphism in ${\bf D}(\bb C_{Y})$ (which in that case actually stems from an honest quasi-isomorphism in the category of complexes of left $\bb C_{Y}$-modules).  
The topological invariance follows directly from the calculation
\begin{align*}
\chi(Y, \mc{C}_{\bullet}(\mc{D}_X\rtimes G))&:=\sum_{k\geq 0}(-1)^k\dim_{\bb C}\bb H^{-k}(Y, \mc{C}^{-\bullet}(\mc{D}_X\rtimes G))\\
&=\sum_{k\geq0}(-1)^k\dim_{\bb C}\h_{\textrm{CR}}^{2n-k}(Y, \bb C)\\
&=|G|\frac{1}{|G|}\sum_{g\in G}\sum_i\sum_{k\geq0}(-1)^k\dim_{\bb C}\h_{\textrm{dR}}^{2n-2l_g^i-k}(X_i^g)\\
&=|G|\big(\frac{1}{|G|}\sum_{g\in G}\sum_i\chi(X_i^g)\big)\\
&=|G|\cdot\chi(Y).
\end{align*}
\end{proof}
\section{Algebraic index theorem}
\label{sec5}

To keep the derivation of an algebraic index theorem for formal deformations of $\mc{D}_X\rtimes G$ simple, we specialize to a $1$-parameter formal deformation $\mc{H}_{1, \hbar c, X, G}$. Here, as usual, $\hbar c$ stands for a family of parameters $\hbar c_1, \dots, \hbar c_k$ where $\hbar$ is a single formal indeterminate and $c_1, \dots, c_k$ are fixed complex-valued parameters. Extending the scalars of $\mc{H}_{1, \hbar c, X, G}$ from $\bb C\llbracket\hbar\rrbracket$ to $\bb K$, we get a sheaf of $\bb K$-algebras, denoted by $\mc{H}_{1, \lau{\hbar}, X, G}$. 

In the following, we prove an algebraic index theorem for the formal trace densities \eqref{formaltdm}. The presented results, in particular the proofs in this section follow closely analogous results in \cite{RT12},  \cite{FFS05} and \cite{PPT07}. The methods and techniques used are standard. Throughout the section, we adhere to the structure of and the notation in Sections $2.4$ and $4.3$ in \cite{RT12}. Thanks to the index theorem, if $X$ is compact, we can define  traces $\tr_{i, c}^H:=\int_{X_i^H}\chi_{i, \lau{\hbar}}^H$ on $\Gamma(Y, \mc{H}_{1, \lau{\hbar}, X, G})$ for the various parabolic subgroups $H$ of $G$. The obtained traces are not necessarily linearly independent but they have the property of varying for different choices of values for $c_1, \dots, c_k$. This way, each of the traces serves as a tool for detection of nontrivial deformations of $\Gamma(Y, \mc{D}_X\rtimes G)$ in direction of $\big(\oplus_{g\in G, i}\h^0(X_i^g, \bb C)\big)^G$ with $\codim (X_i^g)=1$ and $X_i^g\cap X_i^H\neq\varnothing$.
 
Fix a stratum $X_H^i$ and let in the following $\codim(X_H^i)=l$. Let us fix a number $N>>n$. Let further $\mf{g}:=\mf{gl}_N(\mc{A}_{n-l, l}^{H, \lau{\hbar}})$ and let $\mf{h}:=\mf{gl}_{n-l}(\bb K)\oplus\big(\mf{z}\oplus\mf{gl}_N(\bb C)\big)\otimes\bb K$. By Section 3.1 and 3.2 in \cite{FFS05}  the Hochschild cocycle $\psi_{2n-2l}^{\hbar}$, defined by Equation \eqref{formalcocycle}, corresponds to a unique $(2n-2l)$-Lie cocycle $\Psi_{2n-2l}\in\C^{2n-2l}(\mf{g}, \mf{h}; \mf{g}^*)$. Let the mapping $\ev_1: \C^{2n-2l}(\mf{g}, \mf{h}; \mf{g}^*)\rightarrow \C^{2n-2l}(\mf{g}, \mf{h}; \bb K)$ be the evaluation at the identity. In order to formulate an index theorem, we first have to compute the cohomology class $[\ev_1\Psi_{2n-2l}]$.

Let $W_{n-l}^{\lau{\hbar}}$ be the Lie algebra $\Der(\bb K[[x_1, \dots, x_{n-l}]])$. Let $\mf{z}(\bb K)\cong\mf{z}\otimes_{\bb C}\bb K$ be the Lie algebra of the Lie group $Z(\bb K)$. W set $\mf{n}:=(\mf{z}(\bb K)\oplus\mf{gl}_N(\bb K))$. 
We recall the injective Lie algebra homomorphism  \eqref{liealghom} (see \cite[Proposition $4.4$]{Vit19})
\begin{align*}
\Phi_{\lau{\hbar}}: W_{n-l}^{\lau{\hbar}}\rtimes\mf{z}(\bb K)\otimes\hat{\OO}_{n-l}&\hookrightarrow\mc{A}_{n-l, l}^{H, \lau{\hbar}}\\
v+A\otimes p&\mapsto v\otimes1+p\otimes\varphi_{\lau{\hbar}}(A). 
\end{align*}
The  combination of the obvious Lie algebra embedding $\mf{h}\hookrightarrow W_{n-l}^{\lau{\hbar}}\rtimes\mf{n}\otimes\OO_{n-l}$ with the Lie algebra embedding $W_{n-l}^{\lau{\hbar}}\rtimes\mf{n}\otimes\OO_{n-l}\hookrightarrow\mf{g}$, given by 
\[v+(A, B)\otimes p\mapsto 1\otimes\Phi_{\mathsmaller{\lau{h}}}(v+A\otimes p)+B\otimes\Phi_{\mathsmaller{\lau{\hbar}}}(p),\] 
allows us to view $\mf{h}$ as a Lie subalgebra of $\mf{g}$. A decomposition of $\mf{g}$ into a direct sum of $\mf{h}$-modules $\mf{g}\cong\mf{h}\oplus\mf{g}/\mf{h}$ yields a projection of $\mf{h}$-modules $\pr: \mf{g}\rightarrow\mf{h}$ along $\mf{g}/\mf{h}$ which can be interpreted as an $\mf{h}$-equivariant projection. The amount by which this projection  fails to be a Lie algebra homomorphism is measured by the curvature $C\in\Hom(\bigwedge^2\mf{g}, \mf{h})$ defined in \cite{FFS05} by 
\[C(v, w):=[\pr(v), \pr(w)]-\pr([v, w])\]         
for all $v, w\in\mf{g}$. We can  define the Chern-Weil homomorphism $\chi: S^{\bullet}(\mf{h}^*)^{\mf{h}}\rightarrow\h^{2\bullet}(\mf{g}, \mf{h}; \bb K)$ by
\[\chi(P)(v_1\wedge\dots\wedge v_{2k})=\frac{1}{k !}\sum_{\substack{\sigma\in S_{2k}\\\sigma(2i-1)<\sigma(2i)}}(-1)^{\sigma}P\big(C(v_{\sigma(1)}, v_{\sigma(2)}), \dots, C(v_{\sigma(2k-1)}, v_{\sigma(2k)})\big)\] 
for every $P\in S^{\bullet}(\mf{h}^*)^{\mf{h}}$. Now, we prove several supporting propositions for the Chern-Weil homomorphisms which are needed for the computation of the cohomology class $[\ev_1\Psi_{2n-2l}]$. We conclude the section with the promised algebraic index theorem for $\mc{H}_{1, \lau{\hbar}, X, G}$ and a corollary for the case of a compact manifold $X$. 
\begin{proposition}
\label{cwhom1}
The Chern-Weil homomorphism $\mc{X}: S^{q}(\mf{h}^*)^{\mf{h}}\longrightarrow\h^{2q}(\mf{g},  \mf{h}; \bb K)$ is an isomorphism for $N>>n$ and $q\leq n-l+k$ where $k=\min_{\gamma\in\Conj(H)}k_{\gamma}$ and $k_{\gamma}$ is as in Corollary \ref{hhchofhcmodule}, ii).
\end{proposition}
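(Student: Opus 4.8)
The plan is to adapt the computation underlying the corresponding isomorphism in \cite[Section~3]{FFS05} (compare also \cite{RT12,PPT07}), the one genuinely new feature being the Cherednik tensor factor $\widehat{H}_{1,\lau{\hbar}}(\bb C^l,H)$ sitting inside $\mc{A}_{n-l,l}^{H,\lau{\hbar}}$. First I would unravel the source of $\mc{X}$. Since $\mf{h}=\mf{gl}_{n-l}(\bb K)\oplus(\mf{z}\oplus\mf{gl}_N(\bb C))\otimes\bb K$ is a finite direct sum of general linear Lie algebras over $\bb K$, the invariant ring $S^{\bullet}(\mf{h}^*)^{\mf{h}}$ is a polynomial $\bb K$-algebra on the usual Chern generators of each summand (the coefficients of $\det(1+tX)$ restricted to each block), the generators attached to $\mf{gl}_N(\bb C)\otimes\bb K$ persisting up to degree $N$. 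Thus $S^{q}(\mf{h}^*)^{\mf{h}}$ carries an explicit monomial basis in each fixed degree $q$, stable once $N\gg q$.

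For the target I would compute the relative Lie algebra cohomology $\h^{2q}(\mf{g},\mf{h};\bb K)$ by disentangling the two Lie-algebraic inputs built into $\mf{g}$. Through the embedding $\Phi_{\lau{\hbar}}\colon W_{n-l}^{\lau{\hbar}}\rtimes\mf{z}\otimes\hat{\OO}_{n-l}\hookrightarrow\mc{A}_{n-l,l}^{H,\lau{\hbar}}$, the transverse differential-operator factor $\widehat{\mc{D}}_{n-l}^{\lau{\hbar}}$ contributes the Gelfand--Fuks cohomology of formal vector fields relative to $\mf{gl}_{n-l}$; by the classical computation this is a truncated polynomial algebra on the Chern generators of $\mf{gl}_{n-l}(\bb K)$, with the truncation occurring at cohomological degree $2(n-l)$. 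The factor $\widehat{H}_{1,\lau{\hbar}}(\bb C^l,H)$, together with the $\mf{gl}_N$ and $\mf{z}$ directions, is controlled for $N\gg n$ by the stabilized Loday--Quillen--Tsygan isomorphism \cite{Lod13}, which renders $\h^{\bullet}(\mf{gl}_N(\mc{A}_{n-l,l}^{H,\lau{\hbar}}),\mf{gl}_N(\bb C)\otimes\bb K;\bb K)$ a functor of the cyclic homology of $\mc{A}_{n-l,l}^{H,\lau{\hbar}}$. Feeding in Corollary~\ref{hhchofhcmodule}, $ii)$, each conjugacy class $\gamma\in\Conj(H)$ then contributes characteristic classes living in an effective range of degree $n-l+k_{\gamma}$, with $2k_{\gamma}=\dim(\bb C^l\oplus\bb C^{l*})^{\gamma}$; assembling these pieces with the Hochschild--Serre spectral sequence for the semidirect decomposition $\mf{g}\cong\mf{h}\oplus\mf{g}/\mf{h}$ identifies $\h^{2q}(\mf{g},\mf{h};\bb K)$ with $S^{q}(\mf{h}^*)^{\mf{h}}$ below the first degree at which the smallest of these ranges cuts off, namely $q\leq n-l+k$ with $k=\min_{\gamma\in\Conj(H)}k_{\gamma}$.

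Finally I would verify that this abstract isomorphism is realized precisely by the Chern--Weil map $\mc{X}$ built from the curvature $C(v,w)=[\pr(v),\pr(w)]-\pr([v,w])$ of the $\mf{h}$-equivariant splitting; this is the standard identification of the Chern--Weil construction with the transgression (edge homomorphism) in the Hochschild--Serre spectral sequence, carried out as in \cite[Section~3.2]{FFS05}. The main obstacle, and the only point beyond \cite{FFS05,RT12}, is the second paragraph: one must show that the cyclic contribution of the Cherednik factor produces no spurious classes below degree $2k$, that the truncated Gelfand--Fuks polynomials and the Cherednik cyclic classes do not interact in the range $q\leq n-l+k$, and that the spectral sequence therefore degenerates with $\mc{X}$ as its edge map. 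Controlling this interaction, in tandem with the stabilization in $N$ that freezes the relevant cohomology, is where the bulk of the work lies.
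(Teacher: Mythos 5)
Your proposal points at the right external inputs (Loday--Quillen--Tsygan, the cyclic homology of $\mc{A}_{n-l,l}^{H,\lau{\hbar}}$, the FFS/RT template), but it is missing the one mechanism that actually makes the paper's proof work, and it substitutes for it a spectral sequence that does not exist as stated. The heart of the argument is not a computation of $\h^{\bullet}(\mf{g},\mf{h};\bb K)$ with trivial coefficients followed by an identification of $\mc{X}$ with an edge map; it is a computation of $\h^{p}(\mf{g}, S^{q}\mf{g})$, the Lie algebra cohomology with coefficients in symmetric powers of the adjoint representation, showing it is concentrated in $p=2n-2l+2k$ and vanishes for $p<2n-2l+2k$. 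This is obtained by the dual-numbers trick: one applies Loday--Quillen--Tsygan to $\mf{gl}_N(\mc{A}_{n-l,l}^{H,\lau{\hbar}}\otimes\bb C[\varepsilon])$ with $\deg(\varepsilon)=1$, uses the Hochschild--Serre spectral sequence for the \emph{abelian ideal} $\mf{g}\varepsilon$ to write $\h_{m}(\mf{gl}_N(\mc{A}\otimes\bb C[\varepsilon]))\cong\oplus_{p}\h_p(\mf{g}, S^{m-p}\mf{g}\varepsilon)$ as in the appendix of \cite{PPT07}, and then matches $\varepsilon$-degrees against Corollary \ref{hhchofhcmodule}, $ii)$. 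You do cite that corollary, but you feed it into an LQT statement for $\mf{gl}_N(\mc{A})$ without $\varepsilon$, so the symmetric-power coefficients --- the whole point of introducing $\bb C[\varepsilon]$ --- never appear in your argument. Without the concentration of $\h^{p}(\mf{g},S^{q}\mf{g})$ you cannot run the argument of \cite[Proposition 5.2]{FFS05}, which is what the paper invokes to finish.

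Two further structural problems. First, the ``Hochschild--Serre spectral sequence for the semidirect decomposition $\mf{g}\cong\mf{h}\oplus\mf{g}/\mf{h}$'' is not available: $\mf{h}$ is not an ideal of $\mf{g}$, and $\mf{g}/\mf{h}$ is only an $\mf{h}$-module complement, so there is no such semidirect product of Lie algebras and no associated Hochschild--Serre sequence; the only Hochschild--Serre sequence in the proof is the one for $\mf{g}\varepsilon\trianglelefteq\mf{gl}_N(\mc{A}\otimes\bb C[\varepsilon])$ mentioned above. Second, the separate Gelfand--Fuks treatment of the $W_{n-l}$ direction is both unnecessary and mismatched here: in Proposition \ref{cwhom1} the Lie algebra $\mf{g}=\mf{gl}_N(\mc{A}_{n-l,l}^{H,\lau{\hbar}})$ is handled in one stroke by LQT, with the transverse factor $\widehat{\mc{D}}_{n-l}^{\lau{\hbar}}$ already absorbed into the cyclic homology of $\mc{A}_{n-l,l}^{H,\lau{\hbar}}$ through the degree shift by $2n-2l$ in Corollary \ref{hhchofhcmodule}. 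The Gelfand--Fuks/formal vector field analysis you describe belongs to Proposition \ref{cwhom2}, where the source Lie algebra really is $W_{n-l}^{\lau{\hbar}}\rtimes\mf{n}\otimes\OO_{n-l}$.
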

\begin{proof}
Assume in what follows that $\deg(\varepsilon)=1$. By \cite[Theorem $10.2.5$]{Lod13}, we have that 
\begin{equation}
\label{theoremladay}
\h_{m}(\mf{gl}_N(\mc{A}_{n-l, l}^{H, \lau{\hbar}}\otimes\bb C[\varepsilon]))\cong\big(\mc{S}^{\bullet}(\hc_{\bullet}(\mc{A}_{n-l, l}^{H, \lau{\hbar}}\otimes\bb C[\varepsilon])[1])\big)_m
\end{equation} 
for every $m\geq0$ where $\mc{S}^{\bullet}$ is the graded symmetric product defined for instance in A.1 in \cite{Lod13} and $(\cdot)_m$ denotes the $m$-th degree of a  graded module. As explained in Appendix $A$ in \cite{PPT07}, the left hand side of Isomorphism \eqref{theoremladay} can be written as
\begin{equation}
\label{lhsofsgiso}
 \h_{m}(\mf{gl}_N(\mc{A}_{n-l, l}^{H, \lau{\hbar}}\otimes\bb C[\varepsilon]))\cong\oplus_{p=0}^m\h_p(\mf{g}, S^{m-p}\mf{g}\varepsilon)
\end{equation}
On account of Corollary \ref{hhchofhcmodule}, \emph{ii)}, the right hand side of Isomorphism \eqref{lhsofsgiso} can be written as
\begin{equation}
\label{rhsofsgiso}
\bigoplus_{d\geq0}\bigoplus_{j_1+\dots+j_d=m}\Big(\oplus_{\gamma}\hc_{j_1-(2n-2l+2k_{\gamma})-1}(\bb K[\varepsilon])\Big)\otimes\dots\otimes\Big(\oplus_{\gamma}\hc_{j_d-(2n-2l+2k_{\gamma})-1}(\bb K[\varepsilon])\Big) 
\end{equation}
The isomorphism \eqref{theoremladay} is graded of degree $0$. Hence, in particular it respects the grading in $\varepsilon$. Hence, it maps cohomology classes of degree $m-p$ in $\varepsilon$ to elements of degree $m-p$ in $\varepsilon$. Hence, inserting Isomorphisms \eqref{lhsofsgiso} and \eqref{rhsofsgiso} into Isomorphism \eqref{theoremladay} and comparing degrees of $\varepsilon$, we get for every $p\leq m$,
\begin{align*}
\h_p(\mf{g}, S^{m-p}\mf{g}\varepsilon)&\cong\big(\oplus_{\gamma}\hc_{m-(2n-2l+2k_{\gamma})-1}(\bb K[\varepsilon])\big)_{(m-p)-\textrm{th degree in}~\varepsilon}\\
&\cong\Big(\hspace{-0.3em}\oplus_{\gamma}\big(\hc_{m-(2n-2l+2k_{\gamma})-1}(\bb K)\oplus\bb K[\hspace{-2.1em}\underbrace{\varepsilon\otimes\dots\otimes\varepsilon}_{\textrm{$m-(2n-2l+2k_{\gamma})$-times}}\hspace{-2.1em}]\big)\hspace{-0.1em}\Big)_{(m-p)-\textrm{th degree in $\varepsilon$}}\\
&\cong\begin{cases}
\bigoplus_{\substack{\gamma\in\Conj(H)\\k_{\gamma}=k}}\bb K[\hspace{-0.6em}\underbrace{\varepsilon\otimes\dots\otimes\varepsilon}_{\textrm{$m-2n-2l+2k$}}\hspace{-0.6em}],\quad\textrm{if}~p=2n-2l+2k\\
0,\quad\textrm{if}~p<2n-2l+2k  
\end{cases}
\end{align*}
Since Lie algebra homology and cohomology are dual, another way of stating the above is: $\h^p(\mf{g}, S^{q}\mf{g})$ is isomorphic to $\bb K^{a_{n-l+k}}$ when $p=2n-2l+2k$ and is $0$ otherwise. The remainder of the proof follows verbatim that of \cite[Proposition 5.2]{FFS05}.  
\end{proof} 
\begin{proposition}
\label{cwhom2}
The Chern-Weil homomorphism $\chi: S^q(\mf{h}^*)^{\mf{h}}\rightarrow\h^{2q}(W_{n-l}^{\lau{\hbar}}\rtimes\mf{n}\otimes\OO_{n-l}, \mf{h}; \bb K)$ is an isomorphism for $q\leq n-l$. Furthermore,~ $\h^{2q}(W_{n-l}^{\lau{\hbar}}\rtimes\mf{n}\otimes\OO_{n-l}, \mf{h}, \bb K)=\C^{2q}(W_{n-l}^{\lau{\hbar}}\rtimes\mf{n}\otimes\OO_{n-l}, \mf{h}; \bb K)$. 
\end{proposition}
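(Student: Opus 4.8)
The plan is to follow the strategy of \cite[Proposition 5.2]{FFS05} together with the classical Gelfand--Fuks computation for the Lie algebra of formal vector fields, the one new ingredient being the reductive current summand $\mf{n}\otimes\OO_{n-l}$, $\mf{n}=(\mf{z}\oplus\mf{gl}_N(\bb C))\otimes\bb K$, that is carried along. Abbreviate $\mf{a}:=W_{n-l}^{\lau{\hbar}}\rtimes\mf{n}\otimes\OO_{n-l}$ and equip it with the Euler grading fixed by $\deg(x_i)=1$, $\deg(\partial_{x_i})=-1$, so that $W_{n-l}^{\lau{\hbar}}=\bigoplus_{k\geq-1}\mf{a}_k$ with $\mf{a}_{-1}\cong\bb K^{n-l}$ the abelian algebra of constant vector fields and $\mf{a}_0=\mf{gl}_{n-l}(\bb K)$, while $\mf{n}\otimes\OO_{n-l}=\bigoplus_{k\geq0}\mf{n}\otimes S^{k}$ (currents with homogeneous coefficients of degree $k$). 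First I would note that the Euler field $E=\sum_{i=1}^{n-l}x_i\partial_{x_i}$ is exactly the identity of $\mf{gl}_{n-l}(\bb K)\subset\mf{h}$ and that $[E,\cdot]$ acts as multiplication by $k$ on $\mf{a}_k$ and on $\mf{n}\otimes S^{k}$; hence $[E,\cdot]$ is the weight operator. The Lie derivative $\mc{L}_E$ then acts on a weight-$w$ cochain as multiplication by $-w$, and since every relative cochain is $\mf{h}$-invariant and $E\in\mf{h}$, invariance forces $\mc{L}_E$ to vanish, so $w=0$. Thus the whole complex $\C^{\bullet}(\mf{a},\mf{h};\bb K)$ is concentrated in Euler-weight $0$; combined with the bound below this makes it finite-dimensional in each degree.

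Next I would use weight-$0$ concentration to truncate the complex and reduce to invariant theory. The only negatively graded summand of $\mf{a}/\mf{h}$ is $\mf{a}_{-1}$, of dimension $n-l$, so a weight-$0$ wedge of length $p$ built from $r$ factors dual to $\mf{a}_{-1}$ has its remaining $p-r$ factors of strictly positive weight summing to $r$; thus $p-r\leq r\leq n-l$ and $\C^{p}(\mf{a},\mf{h};\bb K)=0$ once $p>2(n-l)$. In degree $2q$ with $q\leq n-l$ the weight-$0$, $\mf{h}$-invariant cochains are governed by the $\mf{gl}_{n-l}(\bb C)\oplus\mf{n}$-equivariant pairing of the translation duals against the weight-one directions from $\mf{a}_1$ and $\mf{n}\otimes S^{1}$, which is precisely the domain of the Chern--Weil map $\chi$. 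Invoking the classical identity $\h^{\bullet}(W_{n-l}^{\lau{\hbar}},\mf{gl}_{n-l};\bb K)\cong\bb K[c_1,\dots,c_{n-l}]$ truncated above degree $2(n-l)$ and propagating it over the reductive factor $\mf{n}$ --- either through the Hochschild--Serre spectral sequence for the ideal $\mf{n}\otimes\OO_{n-l}\subset\mf{a}$ or through the multiplicativity of Chern--Weil for the two commuting curvatures, using the factorization $S^{\bullet}(\mf{h}^*)^{\mf{h}}=S^{\bullet}(\mf{gl}_{n-l}^*)^{\mf{gl}_{n-l}}\otimes S^{\bullet}(\mf{n}^*)^{\mf{n}}$ --- gives that $\chi\colon S^{q}(\mf{h}^*)^{\mf{h}}\to\h^{2q}(\mf{a},\mf{h};\bb K)$ is an isomorphism for $q\leq n-l$, which is the first assertion. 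In particular $\dim_{\bb K}\h^{2q}(\mf{a},\mf{h};\bb K)=\dim_{\bb K}S^{q}(\mf{h}^*)^{\mf{h}}$ in this range.

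For the second assertion the key is the cochain-level count $\dim_{\bb K}\C^{2q}(\mf{a},\mf{h};\bb K)=\dim_{\bb K}S^{q}(\mf{h}^*)^{\mf{h}}$ for $q\leq n-l$: in the stable range the weight-$0$, $\mf{h}$-invariant $2q$-cochains are exhausted by the Chern--Weil configuration and are not enlarged by the higher-weight ones (those using $r>q$ duals of $\mf{a}_{-1}$ together with curvature factors of weight $\geq2$). Granting this, $\h^{2q}$ is a subquotient of the finite-dimensional space $\C^{2q}$ which, by the previous paragraph, has the same dimension; hence the coboundaries $\Ima(d\colon\C^{2q-1}\to\C^{2q})$ vanish and every $2q$-cochain is a cocycle, so $\h^{2q}(\mf{a},\mf{h};\bb K)=\C^{2q}(\mf{a},\mf{h};\bb K)$. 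The main obstacle is exactly this dimension match: one must verify, through the weight bookkeeping of \cite{FFS05} and \cite[Section 4.3]{RT12}, that no weight-$0$ invariant $2q$-cochain survives beyond those counted by $S^{q}(\mf{h}^*)^{\mf{h}}$. The hypothesis $q\leq n-l$ enters essentially here, since for $q>n-l$ the truncation of $\bb K[c_1,\dots,c_{n-l}]$ introduces relations and the equality $\h^{2q}=\C^{2q}$ breaks down.
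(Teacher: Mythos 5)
Your route differs from the paper's on both halves, and on the second half it has a genuine gap. For the isomorphism statement the paper does not recompute the Gelfand--Fuks cohomology from scratch: it factors $\chi=\eta^*\circ\mc{X}$, where $\eta^*:\h^{2q}(\mf{g},\mf{h};\bb K)\to\h^{2q}(W_{n-l}^{\lau{\hbar}}\rtimes\mf{n}\otimes\OO_{n-l},\mf{h};\bb K)$ is the surjection induced by restriction along $W_{n-l}^{\lau{\hbar}}\rtimes\mf{n}\otimes\OO_{n-l}\hookrightarrow\mf{g}$ and $\mc{X}$ is the isomorphism already established in Proposition \ref{cwhom1}; surjectivity of $\chi$ is then free, and bijectivity follows by matching dimensions against the abstract isomorphism $S^q(\mf{h}^*)^{\mf{h}}\cong\h^{2q}(W_{n-l}^{\lau{\hbar}}\rtimes\mf{n}\otimes\OO_{n-l},\mf{h};\bb K)$ supplied by \cite[Corollary 1]{Kho07}. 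Your weight/Euler-grading setup is correct as far as it goes (the Euler field is the identity of $\mf{gl}_{n-l}\subset\mf{h}$, invariance forces weight $0$, and the complex vanishes above degree $2(n-l)$), but the step where you "propagate over the reductive factor $\mf{n}$" via Hochschild--Serre for the ideal $\mf{n}\otimes\OO_{n-l}$ or via a claimed factorization of invariants is exactly the nontrivial current-algebra computation that the paper outsources to Khoroshkin; as written it is a sketch of a plan, not a proof.

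The more serious problem is the second assertion. Your argument that $\h^{2q}=\C^{2q}$ rests entirely on the cochain-level count $\dim_{\bb K}\C^{2q}(W_{n-l}^{\lau{\hbar}}\rtimes\mf{n}\otimes\OO_{n-l},\mf{h};\bb K)=\dim_{\bb K}S^q(\mf{h}^*)^{\mf{h}}$, which you introduce with "granting this" and then yourself label the main obstacle. Your own weight bookkeeping shows why this is not automatic: in degree $2q$ the weight-zero configurations include not only the Chern--Weil ones ($r=q$ translation duals against $q$ weight-one factors) but also those with $r>q$ translation duals paired against factors from $\mf{a}_k$ or $\mf{n}\otimes S^k$ with $k\geq 2$, and ruling out additional invariants there is precisely the classical invariant-theory computation. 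The paper closes the argument by a different and logically cleaner mechanism: it shows via the invariant theory of \cite{fuks86} that the \emph{odd} relative cochain groups $\C^{2q+1}$ vanish, so that every even cochain is a cocycle and no even cochain is a coboundary, giving $\h^{2q}=\C^{2q}$ without any exact dimension count of $\C^{2q}$. Until you either carry out your dimension count or switch to the vanishing-of-odd-cochains argument, your proof of the second statement is incomplete.
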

\begin{proof}
The injective Lie algebra homomorphism $W_{n-l}^{\lau{\hbar}}\rtimes\mf{n}\otimes\OO_{n-l}\hookrightarrow\mf{g}$ induces a natural $\mf{h}$-equivariant injective map $\eta: \bigwedge^{2q}\big(W_{n-l}^{\lau{\hbar}}\rtimes\mf{n}\otimes\OO_{n-l}/\mf{h}\big)\rightarrow\bigwedge^{2q}\big(\mf{g}/\mf{h}\big)$ which in turn gives an $\mf{h}$-equivariant  surjective morphism $\eta^*: \h^{2q}(\mf{g}, \mf{h};\bb K)\rightarrow \h^{2q}(W_{n-l}^{\lau{\hbar}}\rtimes\mf{n}\otimes\OO_{n-l}, \mf{h};\bb K)$. It is a straightforward verification that $\chi=\eta^*\circ\mc{X}$, where $\mc{X}$ is the Chern-Weil homomorphism from Proposition \ref{cwhom1}. Thus, the map $\chi$ is surjective. On the other hand, on account of \cite[Corollary 1]{Kho07}, we have $S^q(\mf{h}^*)^{\mf{h}}\cong\h^{2q}(W_{n-l}^{\lau{\hbar}}\rtimes\mf{n}\otimes\OO_{n-l}, \mf{h}, \bb K)$ for $q\leq n-l$. Hence, $\chi$ is in fact an isomorphism because $S^q(\mf{h}^*)^{\mf{h}}$ is a finite-dimensional $\bb K$-vector space for $q\leq n-l$. We show  that $\C^{2q+1}(W_{n-l}^{\lau{\hbar}}\rtimes\mf{n}\otimes\OO_{n-l}, \mf{h}, \bb K)=0$ making use of invariant theory the same way as in \cite{fuks86} which implies the second statement of the proposition.   
 \end{proof}
Let $X_1\oplus X_2\oplus X_3\in\mf{h}$. Let $(\hat{A}_{\hbar}\Ch_{\phi^{\hbar}}\Ch)_k\in S(\mf{h}^*)^{\mf{h}}$ be the homogeneous term of degree $k$ in the Taylor expansion of $\hat{A}_{\hbar}\Ch_{\phi^{\hbar}}\Ch(X):=\hat{A}_{\hbar}(X_1)\Ch_{\phi^{\hbar}}(X_2)\Ch(X_3)$, where $\hat{A}(X_1)=\det\Big(\frac{X/2}{\sinh(X/2)}\Big)^{1/2}$ and $\hat{A}_{\hbar}(X_1)=\hat{A}(\hbar X_1)$, $\Ch_{\phi^{\hbar}}(X_2)=\phi^{\hbar}(\exp(X_2))$ and $\Ch(X_3)=\tr(\exp(X_3))$. In the ensuing proposition, we compute the cohomology class $[\ev_1\Psi_{2n-2l}]$ following almost verbatim the proof of \cite[Theorem 3]{RT12} and imitating various techniques from the proofs of \cite[Theorem $5.1$]{FFS05} and \cite[Theorem $5.3$]{PPT07}.  
\begin{proposition}
\label{rrh}
Assume that $\hbar\neq0$. Then,
$[\ev_1\Psi_{2n-2l}]=(-1)^{n-l}\mc{X}\big((\hat{A}_{\hbar}\Ch_{\phi^{\hbar}}\Ch)_{n-l}\big)$.
\end{proposition}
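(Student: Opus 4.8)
The plan is to exploit Proposition \ref{cwhom1}, which asserts that for $N \gg n$ the Chern-Weil homomorphism $\mc{X}: S^{n-l}(\mf{h}^*)^{\mf{h}} \to \h^{2n-2l}(\mf{g}, \mf{h}; \bb K)$ is an isomorphism. Since $\ev_1\Psi_{2n-2l}$ is a Lie cocycle (the identity being an invariant element of $\mf{g}$), its class lies in $\h^{2n-2l}(\mf{g}, \mf{h}; \bb K)$, so there is a unique $P \in S^{n-l}(\mf{h}^*)^{\mf{h}}$ with $[\ev_1\Psi_{2n-2l}] = \mc{X}(P)$; the entire task reduces to identifying $P$ with $(-1)^{n-l}(\hat{A}_{\hbar}\Ch_{\phi^{\hbar}}\Ch)_{n-l}$. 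To compute $P$ explicitly I would first descend to the subalgebra $W_{n-l}^{\lau{\hbar}} \rtimes \mf{n} \otimes \OO_{n-l}$ of $\mf{g}$. By Proposition \ref{cwhom2} the composite $\chi = \eta^* \circ \mc{X}$ is an isomorphism onto $\h^{2n-2l}(W_{n-l}^{\lau{\hbar}} \rtimes \mf{n} \otimes \OO_{n-l}, \mf{h}; \bb K)$, and there cohomology coincides with cochains; hence $\eta^*$ is injective on the image of $\mc{X}$, and $P$ is completely determined by the explicit cochain $\eta^*\ev_1\Psi_{2n-2l}$, which I would evaluate on a spanning set of $\bigwedge^{2n-2l}(W_{n-l}^{\lau{\hbar}} \rtimes \mf{n} \otimes \OO_{n-l}/\mf{h})$.

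The second step is to use the multiplicative structure of the cocycle. By its definition in \eqref{formalcocycle}, $\psi_{2n-2l}^{\hbar}$ factors as the product of the Feigin-Felder-Shoikhet cocycle $\tau_{2n-2l}^{\hbar}$ of $\widehat{\mc{D}}_{n-l}^{\lau{\hbar}}$ and the trace $\phi^{\hbar}$ of $\widehat{H}_{1, \lau{\hbar}}(\bb C^l, H)$. This factorization is compatible with the threefold splitting $\mf{h} = \mf{gl}_{n-l}(\bb K) \oplus (\mf{z} \otimes \bb K) \oplus (\mf{gl}_N(\bb C) \otimes \bb K)$, so the Lie cocycle $\Psi_{2n-2l}$ and its Chern-Weil image decompose multiplicatively into three factors supported on the three summands, and I would compute each factor separately.

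The three contributions are evaluated as follows. On the summand $\mf{gl}_{n-l}(\bb K)$ -- the formal differential-operator direction -- the computation is precisely that of \cite{FFS05}, which identifies the class of $\tau_{2n-2l}^{\hbar}$ with the $\hat{A}$-genus $\hat{A}_{\hbar}$ (the tangent curvature eventually producing $\hat{A}(R_T)$ in the index formula). On the auxiliary matrix summand $\mf{gl}_N(\bb C) \otimes \bb K$ the evaluation returns the ordinary Chern character $\Ch = \tr(\exp(\cdot))$, exactly as in \cite{FFS05, PPT07}. On the summand $\mf{z} \otimes \bb K$, which carries the Cherednik data, the trace $\phi^{\hbar}$ takes the place of the ordinary trace and produces the twisted Chern character $\Ch_{\phi^{\hbar}}(X_2) = \phi^{\hbar}(\exp(X_2))$, exactly as in the proof of \cite[Theorem 3]{RT12}. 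Multiplying the three factors and retaining the degree-$(n-l)$ part yields $(\hat{A}_{\hbar}\Ch_{\phi^{\hbar}}\Ch)_{n-l}$, while the overall sign $(-1)^{n-l}$ is supplied by the normalization $(-1)^{\lfloor (2n-2l-p)/2 \rfloor}$ built into the trace density map \eqref{standardmap}, which at the level of the $0$-cochain ($p=0$) equals $(-1)^{n-l}$.

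The main obstacle is the evaluation on the formal differential-operator direction: establishing that $\tau_{2n-2l}^{\hbar}$ is carried by the Chern-Weil homomorphism to precisely the $\hat{A}$-class is the technical heart of \cite{FFS05} and rests on a delicate Gelfand-Fuks type cohomological computation together with invariant-theoretic identifications in the spirit of \cite{fuks86}. Since the present situation departs from \cite{RT12} only by replacing the trace on the Weyl-algebra factor with a trace $\phi^{\hbar}$ on the Cherednik algebra $\widehat{H}_{1, \lau{\hbar}}(\bb C^l, H)$, the essential remaining work is to verify that this substitution leaves the cohomological computation unaffected -- in particular that $\phi^{\hbar}$ contributes $\Ch_{\phi^{\hbar}}$ in the same way the Weyl-algebra trace contributes its character in \cite{RT12} -- after which the argument proceeds verbatim.
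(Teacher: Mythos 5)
Your overall strategy coincides with the paper's: invoke Proposition \ref{cwhom1} to write $[\ev_1\Psi_{2n-2l}]=\mc{X}(P)$ for a unique invariant polynomial $P$, use Proposition \ref{cwhom2} to reduce to an identity of honest cochains on $W_{n-l}^{\lau{\hbar}}\rtimes\mf{n}\otimes\OO_{n-l}$, determine $P$ by evaluation on the Cartan subalgebra, and identify the result with the degree-$(n-l)$ component of $\hat{A}_{\hbar}\Ch_{\phi^{\hbar}}\Ch$ by deferring the hard computation to \cite{FFS05}, \cite{PPT07} and \cite{RT12}. That much is fine.

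There is, however, a genuine gap in your treatment of the sign $(-1)^{n-l}$. Proposition \ref{rrh} is a statement purely about relative Lie algebra cohomology of $(\mf{g},\mf{h})$; the trace density map \eqref{standardmap} and its normalization $(-1)^{\lfloor(2n-2l-p)/2\rfloor}$ play no role in it whatsoever --- that normalization only enters later, in the proof of Theorem \ref{algindexthm}, when the Lie cocycle is actually paired with connection forms. In the paper the sign is produced inside the Chern--Weil evaluation itself: one evaluates $\chi(P)$ on wedges $\frac{d}{dx_1}\wedge f_1\wedge\dots\wedge\frac{d}{dx_{n-l}}\wedge f_{n-l}$ with $f_i$ drawn from the special vectors $u_{ij}$, $v_{ir}$, $w_{ir_a}$ lying in $\ker(\pr)$, and each curvature term $C(\frac{d}{dx_i}, f_i)=-\frac{df_i}{dx_i}$ contributes one factor of $-1$, yielding $\chi(P)(\frac{d}{dx_1}\wedge f_1\wedge\dots\wedge\frac{d}{dx_{n-l}}\wedge f_{n-l})=(-1)^{n-l}P(\frac{df_1}{dx_1},\dots,\frac{df_{n-l}}{dx_{n-l}})$; comparing with the direct evaluation of $\ev_1\Psi_{2n-2l}$ on the same wedges gives $P=(-1)^{n-l}P_{n-l}$. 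As written, your argument has no mechanism for generating this sign. A secondary, smaller looseness: the cocycle $\Psi_{2n-2l}$ does not literally factor as a product of three cocycles supported on the three summands of $\mf{h}$ --- the Feigin--Felder--Shoikhet integral formula entangles the $\mf{gl}_{n-l}$ and auxiliary $\mf{gl}_N$ directions. The multiplicative splitting into $\hat{A}_{\hbar}$, $\Ch$ and $\Ch_{\phi^{\hbar}}$ only appears after restriction to the Cartan, at the level of the generating function $S(X)=\sum_{m\geq1}\frac{1}{m!}P_m(X,\dots,X)=(\hat{A}_{\hbar}\Ch)(Y)\Ch_{\phi^{\hbar}}(Z)$, which is the route the paper (following \cite{PPT07}) actually takes.
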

\begin{proof}
By Proposition \ref{cwhom1}, there is an $\mf{h}$-invariant polynomial $P$ with $\mc{X}(P)=[\ev_1\Psi_{2n-2l}]$. The $\aad(\mf{h})$-invariance of $P$ implies that it is uniquely determined by its value on the Cartan subalgebra $\mf{a}$ of $\mf{h}$ spanned by the following group of vectors: $\delta_{ii}\in\mf{gl}_{n-l}(\bb K)$, $1\leq i\leq n-l$, $E_{r_ar_a}\in\mf{z}(\bb K)$, $1\leq r_a\leq n_a$, $1\leq a\leq t$, $E_{rr}\in\mf{gl}_N(\bb K)$, $1\leq r\leq N$. Here, we denote by $(\delta_{ij})_{i, j}$ the identity matrix $\mf{gl}_{n-l}(\bb K)$, by $t$ the number of isotypic components of the semisimple $\bb CH$-module $\bb C^l$ and by $n_a$ the multiplicities of the $t$ non-equivalent simple $\bb CH$-submodules of $\bb C^l$. We can view the  generators of $\mf{a}$ as elements in $\mf{g}$ via  the following identification: 
\begin{align*}
\delta_{ii}&\mapsto\id_N\otimes x_i\frac{d}{dx_i}\otimes1,\quad\textrm{for}\quad 1\leq i\leq n-l,\\
E_{r_ar_a}&\mapsto\hat{E}_{r_ar_a}:=\id_N\otimes1\otimes\big(\sum_{i, j}-(E_{r_ar_a})_{ij}y_ju_i+\sum_{s\in\mc{S}}\frac{2\hbar c(s)}{1-\lambda_s}\lambda_{E_{r_ar_a}, s}(\id_G-s)\big),\quad\textrm{for}\quad& 1\leq r_a\leq n_a,~1\leq a\leq t,\\
E_{rr}&\mapsto E_{rr}\otimes1\otimes1,\quad\textrm{for}\quad 1\leq r\leq N. 
\end{align*}
Consider the commutative diagram 
\begin{equation*}
\begin{tikzcd}
S^{n-l}(\mf{h}^*)^{\mf{h}} \arrow[rd, "\chi"] \arrow[r, "\mc{X}"] &\h^{2n-2l}(\mf{g}, \mf{h}; \bb K)\arrow[d, "\eta^*"] \\
&C^{2n-2l}(W_{n-l}^{\lau{\hbar}}\rtimes\mf{n}\otimes\OO_{n-l}, \mf{h}; \bb K)
\end{tikzcd}
\end{equation*} 
emanating from Proposition \ref{cwhom2}. Since all the arrows are isomorphisms here, we can prove the restriction of the desired identity to $W_{n-l}^{\lau{\hbar}}\rtimes\mf{n}\otimes\OO_{n-l}$, that is $\chi(P)=\eta^*([\ev_1\Psi_{2n-2l}])$. Due to this restriction, the identity becomes an identity of cocycles rather than of cohomology classes. To shorten the notation, throughout the proof we shall write $\ev_1\Psi_{2n-2l}$ to denote its cohomology class as well as its restriction $\eta^*([\ev_1\Psi_{2n-2l}])$ to the Lie subalgebra $\mf{n}$. Just like in Equation $(22)$ in the proof of \cite[Theorem 3]{RT12}, we select an invariant polynomial $P_{n-l}$ whose restriction to $\mf{a}\subset\mf{g}$ is given by 
\begin{align*}
P_{n-l}(M_1\otimes a_1\otimes b_1,& \dots, M_{n-l}\otimes a_{n-l}\otimes b_{n-l})=\tr(M_1\dots M_{n-l})\phi^{\hbar}(b_1\dots b_{n-l})\\
&\mu_{n-l}\int_{[0,1]^{n-l}}\prod_{1\leq i\leq j\leq n-l}\exp(\hbar\psi(u_i-u_j)\alpha_{ij})(a_1\otimes\dots\otimes a_n)du_1\dots du_{n-l}
\end{align*}
for all $M_i\otimes a_i\otimes b_i\in\mf{a}$, $i=1, \dots, n-l$.  Here, the maps $\mu_{n-l}, \psi, \alpha_{ij}$ and the variables $u_1,\dots, u_{n-l}$ are defined as in \cite[Section 2.3]{FFS05}. We evaluate the cocycle $\ev_1\Psi_{2n-2l}$ on the following special vectors 
\begin{align*}
u_{ij}&:=-\frac{1}{2}x_i^2\frac{d}{dx_i}\delta_{ij}+x_ix_j\frac{d}{dx_j},\quad\quad
v_{ir}:=x_i\otimes E_{rr},\quad\quad
w_{ir_a}:= x_i\otimes \hat{E}_{r_ar_a}
\end{align*}
where the indices are as above. These vectors are in the kernel of $\pr$ and satisfy the following commutator relations 
\begin{align*}
[\frac{d}{dx_i}, u_{ij}]&=x_j\frac{d}{dx_j},\qquad[\frac{d}{dx_i}, v_{ir}]=E_{rr},\qquad[\frac{d}{dx_i}, x_i\otimes\hat{E}_{r_ar_a}]=\hat{E}_{r_ar_a}.
\end{align*}
Hence, $C(\frac{d}{dx_i}, u_{ij})=-x_j\frac{d}{dx_j}$, $C(\frac{d}{dx_i}, v_{ir})=-E_{rr}$ and $C(\frac{d}{dx_i}, w_{ir_a})=-\hat{E}_{r_ar_a}$. In what follows, we denote by $f_i$ any vector of the form $u_{ij}$ with $i\geq j$, or $v_{ir}$ or $w_{ir_a}$. Then, 
\begin{align}
\label{cwidentity}
\chi(P)(\frac{d}{dx_1}\wedge f_1\wedge\dots\wedge\frac{d}{dx_{n-l}}\wedge f_{n-l})
=(-1)^{n-l}P(\frac{df_1}{dx_1}, \dots, \frac{df_{n-l}}{dx_{n-l}})  
\end{align} 
where in $\chi(P)$ only  permutations $\sigma\in S_{2n-2l}$ with $\sigma(i)-\sigma(i-1)=1$ for all $i=2, 4, \dots, 2n-2l$ contribute nontrivially. The number of such permutations in $S_{2n-2l}$ equals the number of permutations of pairs of tuples $(2i-1, 2i)$, $i=1, \dots, n-l$ which is exactly $(n-l)!$. Each basis vector of $\mf{a}$ is of the form $\frac{df_i}{dx_i}$ for some $f_i$. With that in mind, we show that the left hand side of \eqref{cwidentity} is $\ev_1\Psi_{2n-2l}$ exactly the same way as in the proof of \cite[Theorem 5.1]{FFS05} and that of \cite[Theorem 3]{RT12}. Namely, one gets 
\[\chi(P)(\frac{d}{dx_1}\wedge f_1\wedge\dots\wedge\frac{d}{dx_{n-l}}\wedge f_{n-l})=P_{n-l}(\frac{df_1}{dx_1}, \dots, \frac{df_{n-l}}{dx_{n-l}})\]
which combined with \eqref{cwidentity} implies $P=(-1)^{n-l}P_{n-l}$  on $\mf{a}$. Thus, $P=(-1)^{n-l}P_{n-l}$ on $\mf{h}$. It remains to calculate $P_{n-l}$ on $\mf{a}$. We start by remarking that $P_{n}=P_n'\phi^{\hbar}$ where $P_n'$ is the polynomial defined in Equation $(8)$ in the proof of \cite[Theorem 5.1]{FFS05}. In the same fashion as in \cite[Theorem 5.3]{PPT07}, we explicitly calculate $P_{n-l}$ on the diagonal matrices $X=Y+Z$ where $Y:=\sum_{i=1}^{n-l}\nu_ix_i\frac{d}{dx_1}+\sum_{r=1}^N\sigma_rE_{rr}\in\mf{gl}_{n-l}(\bb K)\oplus\mf{gl}_{N}(\bb K)$ and $Z:=\sum_{1\leq a\leq t, 1\leq r_a\leq n_a}\tau_{r_a}\hat{E}_{r_ar_a}\in\mf{z}(\bb K)$, $\nu_i, \sigma_r, \tau_{r_a}\in\bb K$. To that aim, we  consider the generating function $S(X)=\sum_{m\geq1}\frac{1}{m!}P_m(X, \dots, X)$. We then have
\begin{align*}
S(X)&=\sum_{l\geq0}\frac{1}{l!}P_l'(\underbrace{Y, \dots, Y}_{\textrm{$l$ times}})\sum_{k=m-l\geq0}\frac{1}{k!}\phi^{\hbar}(Z^{k})=(\hat{A}_{\hbar}\Ch)(Y)\Ch_{\phi^{\hbar}}(Z)
\end{align*} 
where we use the identity $\sum_l\frac{1}{l!}P_l'(Y, \dots, Y)=(\hat{A}_{\hbar}\Ch)(Y)$ in the proof of \cite[Theorem $5.1$]{FFS05} (see also \cite[Theorem $5.3$]{PPT07}). Since $P_{n-l}$ is the degree $n-l$ component of $S$, it is equal to $\big(\hat{A}_{\hbar}\Ch\Ch_{\phi^{\hbar}}\big)_{n-l}$. Hence, we have $\eta^*([\ev_1\Psi_{2n-2l}])=(-1)^{n-l}\chi\Big(\big(\hat{A}_{\hbar}\Ch\Ch_{\phi^{\hbar}}\big)_{n-l}\Big)$. The assertion follows.  
\end{proof}
Let henceforth $\varphi^*\nabla^{\infty}:=
 \nabla+[A, ~\cdot~]$ 
be the flat smooth connection on the associated vector bundle $E$ with a fiber $\mc{A}_{n-l, l}^{H, \lau{\hbar}}$ over $X_i^H$ from Section \ref{constructionoftdm} where $\nabla$ is a smooth (non-flat) connection and $A\in\Omega^1(X_i^H, E)$. Consequently, by defiintion
\[(\varphi^*\nabla^{\infty})^2=\nabla^2 + [\nabla A+\frac{1}{2}[A, A], ~\cdot~]=[\Theta,~ \cdot~]\]
with a central element $\Theta\in\Omega^2(X_i^H, \bb K)$. At the same time the curvature of the non-flat connection $\nabla$ can be written in the form  $\nabla^2=[R_T+R_N, ~\cdot~]$ with $R_T\in\Omega^2(X_i^H, \mf{gl}_{n-l}(\bb K))$ and $R_N\in\Omega^2(X_i^H, \mf{z}(\bb K))$ (see, e.g., Section $4$ in \cite{FFS05} and Section $4.1$ in \cite{RT12}) from which we conclude
\begin{equation}
\label{almostmc}
\nabla A+\frac{1}{2}[A, A]=\Theta-R_T-R_N. 
\end{equation} 
We observe that on every trivializing chart $U$ of $TX_i^H\oplus\N$ on $X_i^H$, the $1$-forms $A|_U$ and $\vartheta|_U$ from Section \ref{constructionoftdm} differ by a $\mf{gl}_{n-l}(\bb C)\oplus\mf{z}$-valued $1$-form on $U$. Hence, we can use $A$ in the definition of $\chi_{i, \lau{\hbar}}^H$. The following theorem and its proof mimic  \cite[Theorem 6]{RT12} and its proof, respectively. 
\begin{theorem}
\label{algindexthm}
For~~$\id\in\Gamma(Y, \mc{H}_{1, \lau{\hbar}, X, G})$, the $(2n-2l)$-form on $X_i^H$ \[\chi_{i, \lau{\hbar}}^H(\id)-\hbar^{n-l}\Big(\hat{A}(R_T)\Ch(\frac{-\Theta}{\hbar})\Ch_{\phi^{\hbar}}(\frac{R_N}{\hbar})\Big)_{n-l}\] is exact.
\end{theorem}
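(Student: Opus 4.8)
\section*{Proof proposal}

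The plan is to follow the Fedosov-type argument of \cite[Theorem 6]{RT12}, realizing $\chi_{i, \lau{\hbar}}^H(\id)$ as a Chern--Weil representative and then invoking Proposition \ref{rrh}. First I would evaluate the trace density morphism \eqref{formaltdm} on the normalized Hochschild $0$-chain $\id$. Inspecting the composite \eqref{standardmap} with $p=0$, every shuffle term $\id\times(\vartheta)^k$ is a $k$-form, so the only contribution to a $(2n-2l)$-form is $k=2n-2l$, giving
\[
\chi_{i, \lau{\hbar}}^H(\id)=(-1)^{n-l}\,\psi_{2n-2l}^{\hbar}\big(\id\times(\vartheta)^{2n-2l}\big),
\]
with the sign $(-1)^{n-l}$ read off from the normalization $(-1)^{\floor{\frac{3k}{2}}}$ at $k=2n-2l$. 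Because $\varphi^*\nabla^{\infty}$ is flat and $\psi_{2n-2l}^{\hbar}$ is $(\mf{gl}_{n-l}(\bb C)\oplus\mf{z})\otimes\bb K$-basic by Proposition \ref{basiccocycles}, this form is closed, and by the Gelfand--Fuks/Lie-algebra-cohomology realization of \cite[Section 3]{FFS05} its de Rham class on $X_i^H$ is the image of $[\ev_1\Psi_{2n-2l}]\in\h^{2n-2l}(\mf{g},\mf{h};\bb K)$ under the geometric realization determined by the non-flat connection $\nabla$ with curvature $\nabla^2=[R_T+R_N,\,\cdot\,]$.

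Next I would substitute Proposition \ref{rrh}, which computes $[\ev_1\Psi_{2n-2l}]=(-1)^{n-l}\mc{X}\big((\hat{A}_{\hbar}\Ch_{\phi^{\hbar}}\Ch)_{n-l}\big)$, reducing the problem to realizing the Chern--Weil class of the invariant polynomial $P=(\hat{A}_{\hbar}\Ch_{\phi^{\hbar}}\Ch)_{n-l}$ geometrically. The geometric realization of $\mc{X}$ feeds the curvature into $P$, and the essential input here is the relation \eqref{almostmc}: since $\varphi^*\nabla^{\infty}=\nabla+[A,\,\cdot\,]$ is flat, the formal curvature entering the Chern--Weil expression is $\nabla A+\tfrac12[A,A]=\Theta-R_T-R_N$, with $R_T$ valued in $\mf{gl}_{n-l}(\bb K)$, $R_N$ in $\mf{z}\otimes\bb K$, and the central correction $\Theta\in\Omega^2(X_i^H,\bb K)$ carrying the $\mf{gl}_N(\bb C)\otimes\bb K$-block. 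Splitting $P$ according to the three tensor factors of $\mf{h}=\mf{gl}_{n-l}(\bb K)\oplus(\mf{z}\oplus\mf{gl}_N(\bb C))\otimes\bb K$, the $\hat{A}_{\hbar}$-factor is evaluated on $R_T$, the $\Ch_{\phi^{\hbar}}$-factor on $R_N$, and the $\Ch$-factor on $-\Theta$.

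Finally I would track the normalizations. Using $\hat{A}_{\hbar}(X)=\hat{A}(\hbar X)$, $\Ch_{\phi^{\hbar}}(X_2)=\phi^{\hbar}(\exp X_2)$ and $\Ch(X_3)=\tr(\exp X_3)$, the homogeneity of the degree-$(n-l)$ component together with the insertions $R_T$, $R_N/\hbar$, $-\Theta/\hbar$ collects a global factor $\hbar^{n-l}$, while the two signs $(-1)^{n-l}$ from Step one and from Proposition \ref{rrh} cancel. This yields
\[
[\chi_{i, \lau{\hbar}}^H(\id)]=\hbar^{n-l}\Big[\big(\hat{A}(R_T)\Ch(\tfrac{-\Theta}{\hbar})\Ch_{\phi^{\hbar}}(\tfrac{R_N}{\hbar})\big)_{n-l}\Big]
\]
in $\h^{2n-2l}(X_i^H,\bb K)$, so the difference is exact, as claimed. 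The main obstacle I anticipate is Step one: rigorously matching the cochain-level expression $\psi_{2n-2l}^{\hbar}(\id\times(\vartheta)^{2n-2l})$ with the geometric realization of $[\ev_1\Psi_{2n-2l}]$, i.e. checking that the passage from the Hochschild cocycle on $\mc{A}_{n-l,l}^{H,\lau{\hbar}}$ to the Lie cocycle $\Psi_{2n-2l}$ and thence to a de Rham form through $\vartheta$ is compatible with the Chern--Weil map, and that independence of the chosen $\mf{h}$-splitting $\mf{g}\cong\mf{h}\oplus\mf{g}/\mf{h}$ is guaranteed by the basic property in Proposition \ref{basiccocycles}. The remaining $\hbar$- and sign-bookkeeping is routine once the $\mf{gl}_{n-l}$-, $\mf{z}$- and $\mf{gl}_N$-blocks are correctly separated.
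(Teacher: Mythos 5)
Your proposal is correct and follows essentially the same route as the paper's proof: evaluate the trace density on $\id$ to get $(-1)^{n-l}\psi_{2n-2l}^{\hbar}(\id\times\vartheta^{2n-2l})$, identify this with the Chern--Weil realization of $[\ev_1\Psi_{2n-2l}]$, apply Proposition \ref{rrh}, and substitute the curvature relation \eqref{almostmc}, with the two signs $(-1)^{n-l}$ cancelling and homogeneity producing the factor $\hbar^{n-l}$. The only slip is cosmetic: the quantity entering the Chern--Weil expression is $C(A\,\cdot,A\,\cdot)=-\pr\big(\nabla A+\tfrac12[A,A]\big)=R_T+R_N-\Theta$ rather than $\Theta-R_T-R_N$, but your final block-wise insertions ($R_T$, $R_N$, $-\Theta$) agree with the paper's.
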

\begin{proof}
We assume that $A$ saturates  $\pr(A)=0$. We are allowed to do this because $\varphi^*\nabla^{\infty}$ can be rewritten in the form $\varphi^*\nabla^{\infty}=\nabla+[A, -]=(\nabla+[\pr(A), -])+[(A-\pr(A)), -]=\tilde{\nabla}+[\tilde{A}, -]$. Then, accounting that $\pr(\nabla A)=\nabla\pr(A)$, for any pair of smooth vector fields $\xi_1, \xi_2$ on $X_H^i$, we have 
\begin{equation}
\label{curvaturecomp}
C(A\xi_1, A\xi_2)=-\pr(\nabla A(\xi_1, \xi_2)+[A(\xi_1), A(\xi_2)])=-\pr(\Theta-R_T-R_N)=R_T+R_N-\Theta
\end{equation} 
where in the second equality, we use Equation \eqref{almostmc}. 

Denote the homogeneous $\mf{h}$-invariant polynomial $\big(\hat{A}_{\hbar}\Ch\Ch_{\phi^{\hbar}}\big)_{n-l}$ by $P$ and let $v_1,\dots, v_{2n-2l}$ be vector fields on $X$. Note that $\ev_1\Psi_{2n-2l}(A^{\otimes2n-2l})=(2n-2l)!\psi_{2n-2l}^{\hbar}((A)^{2n-2l})$ where the notation $(A)^{2n-2l}=(1, A, \dots, A)$ is as in Section \ref{constructionoftdm}. Then, in exactly the same fashion as in the proof of \cite[Theorem 6]{RT12}, we have
\begin{align*}
&\chi_{i, \lau{\hbar}}^{H}(\id)(v_1, \dots, v_{2n-2l})
=(-1)^{n-l}\ev_1\Psi_{2n-2l}(\underbrace{A\wedge\dots\wedge A}_{\textrm{$2n-2l$~times}})(v_1, \dots, v_{2n-2l})\\
&=\mc{X}(P)(\underbrace{A\wedge\dots\wedge A}_{\textrm{$2n-2l$~times}})(v_1, \dots, v_{2n-2l})\\
&=\frac{1}{(n-l)!}\sum_{\sigma}(-1)^{\sigma}P(C(Av_{\sigma(1)}, Av_{\sigma(2)}), \dots, C(Av_{\sigma(2n-2l-1)}, Av_{\sigma(2n-2l)}))\\
&=\frac{1}{(n-l)!}\sum_{\sigma}(-1)^{\sigma}P((R_T+R_N-\Theta)(v_{\sigma(1)}, v_{\sigma(2)}), \dots, (R_T+ R_N-\Theta)(v_{\sigma(2n-2l-1)}, v_{\sigma(2n-2l)}))\\
&=\frac{1}{(n-l)!}P(\underbrace{(R_T+R_N-\Theta), \dots, (R_T+R_N-\Theta)}_{\textrm{$n-l$ times}})(v_1, \dots, v_{2n-2l})\\
&=P(R_T+R_N-\Theta)(v_1, \dots, v_{2n-2l})
\end{align*}
where after the first line all equalities are modulo exact forms. In the fourth line, we apply  Equality \eqref{curvaturecomp}. The last line is implied by the same argument as in the proof of \cite[Theorem 6]{RT12}. We implicitly use in the definition of the trace density that $\psi_{2n-2l}^{\hbar}$ is $\mf{gl}_{n-l}(\bb C)\oplus\mf{z}$-basic. 
Hence, modulo exact forms, we have \[\chi_{i, \mathsmaller{\lau{\hbar}}}^H(\id)=\big(\hat{A}_{\hbar}(R_T)\Ch(-\Theta)\Ch_{\phi^{\hbar}}(R_N)\big)_{n-l}=\hbar^{n-l}\big(\hat{A}_{\hbar}(\frac{R_T}{\hbar})\Ch(\frac{-\Theta}{\hbar})\Ch_{\phi^{\hbar}}(\frac{R_N}{\hbar})\big)_{n-l}.\]
The definition of $\hat{A}_{\hbar}$ and $\hat{A}$ imply the claim. 
\end{proof}
Suppose now that $X$ is compact. Then, we can define the linear functional $\tr_{i, c}^H: \Gamma(Y, \mc{H}_{1, \lau{\hbar}, X, G})\to\bb K$ by
\[\xi\mapsto\int_{X_i^H}\chi_{i, \lau{\hbar}}^H(\xi).\]

This linear functional is a trace because $\phi^{\hbar}$ is a trace of $\widehat{H}_{1, \lau{\hbar}}(\bb C^l, H)$ and $\tau_{2n-2l}^{\hbar}$ vanishes on reduced Hochschild $2n$-chains of the form $[D_0, D_1]\times(1, \omega, \dots, \omega)$ ($2n$ times $\omega$ ) where $D_0, D_1\in\widehat{\mc{D}}_n^{\lau{\hbar}}$ and $\omega$ is a Maurer-Cartan form with values in $\widehat{\mc{D}}_n^{\lau{\hbar}}$. In particular, the trace $\tr_{i, c}^H$ is well-defined in the extreme case $c_1=\dots=c_k=0$ which yields a trace for the trivial deformation $\mc{D}(X)\rtimes G\lau{\hbar}$.  
\begin{corollary}
Suppose that $c\neq0$. Then, $\tr_{i, 0}^H(\id)\neq\tr_{i, c}^H(\id)$.
\end{corollary}
\begin{proof}
This follows from the fact that $R_N$ in $\mf{z}(\bb K)$ is identified with the element $R_N^1=-\sum_{i, j}(R_N)_{ij}y_ju_i+\sum_{s\in\mc{S}}\frac{2\hbar c(s)}{1-\lambda_s}\lambda_{R_N, s}(\id_G-s)$ in $\Chered[\bb C^l]{H}$ and with the element $R_N^2=-\sum_{i, j}(R_N)_{ij}y_ju_i$ in $\widehat{\mc{D}}_l\rtimes H\lau{\hbar}$, respectively. Since the center of $\bb KH$ is not a commutator of $\Chered[\bb C^l]{H}$, we have that $\phi^{\hbar}(\exp(R_N^1))\neq\phi^{\hbar}(\exp(R_N^2))$. Hence, the statement follows. 
\end{proof} 

The above results mean that the trace $\tr_{i, c}^H(\id)$, more generally the index $\chi_{i, \lau{\hbar}}^H(\id)$, serve as a homological detector of nontrivial deformations in similar fashion as the trace $\chi^{\gamma}$ in Section 4.2 in \cite{RT12}. 

\section{Acknowledgements}
This note presents results derived from my Ph.D thesis \cite{PhDVit19} completed at ETH Zurich. I am indebted to my advisor Prof. Giovanni Felder and my second advisor Prof. Ajay Ramadoss for explaining to me in detail their works on trace densities, algebraic index theorems and other related research matters, as well as for teaching me the techniques which were applied in this note. I am grateful to Prof. Pavel Etingof for various helpful comments and suggestions regarding a previous version of the current manuscript as well as for explaining to me many of the subtle properties of the sheaves of Cherednik algebras. I also  thank Prof. Gwyn Bellamy and Prof. Ulrich Thiel for a fruitful email exchange on the properties of complex reflection groups. Finally, I am grateful to the anonymous referee for his careful reading of the manuscript and the suggestion of several improvements.  
\printbibliography
\vspace{1ex}
\textsc{Department of Mathematics, Massachusetts Institute of Technology, 77 Massachusetts Ave.,\\ Cambridge, MA 02139, USA}\\
\textsc{E-mail address}: \textit{avitanov@protonmail.com}
\end{document}